\numberwithin{equation}{section}
\numberwithin{equation}{section}
\theoremstyle{plain}
\newcommand{\beq}{\begin{equation}}
\newcommand{\eeq}{\end{equation}}
\newcommand{\ben}{\begin{eqnarray}}
\newcommand{\een}{\end{eqnarray}}
\newcommand{\beno}{\begin{eqnarray*}}
\newcommand{\eeno}{\end{eqnarray*}}
\newtheorem{theorem}{Theorem}[section]
\newtheorem{lemma}[theorem]{Lemma}
\newtheorem{proposition}[theorem]{Proposition}
\newtheorem{remark}[theorem]{Remark}
\begin{document}
\date{\today}
\title
{Unstable mode around the 3D boundary layer flow}

\author[C.-J. Liu]{Cheng-Jie Liu}
\address[C.-J. Liu]{School of Mathematical Sciences, LSC-MOE, CMA-Shanghai and Institute of Natural Sciences, Shanghai Jiao Tong University, Shanghai, China}
\email{liuchengjie@sjtu.edu.cn}

\author[M. Ma]{Mengjun Ma}
\address[M. Ma]{School of Mathematics, Sun Yat-Sen University, 510275 Guangzhou, China}
\email{mamj7@mail2.sysu.edu.cn}

\author{Di Wu}
\address[D. Wu]{School of Mathematics, South China University of Technology, Guangzhou, 510640,  P. R. China}
\email{wudi@scut.edu.cn}

\author[Z. Zhang]{Zhu Zhang}
\address[Z. Zhang]{Department of Applied Mathematics, The Hong Kong Polytechnic University, Hong Kong}
\email{zhuama.zhang@polyu.edu.hk}

\begin{abstract}
We study the stability properties of boundary layer-type shear flows for the three-dimensional Navier-Stokes equations in the limit of small viscosity $0<\nu\ll 1$. When the streamwise and spanwise velocity profiles are linearly independent near the boundary, we construct an unstable mode that exhibits rapid growth at the rate of $e^{t/\sqrt{\nu}}$. Our results reveal an analytic instability in the three-dimensional Navier-Stokes equations around generic boundary layer profiles. This instability arises from the interplay between spanwise flow and three-dimensional perturbations, and does not occur in purely two-dimensional flows.
\end{abstract}
\maketitle
\tableofcontents
\section{Introduction}

\subsection{Background}

Understanding the behavior of high-Reynolds-number flows near physical boundaries is a fundamental problem in mathematical fluid mechanics. The theoretical framework for describing this behavior is the Prandtl's boundary layer theory \cite{Prandtl-1904}, which provides an asymptotic approximation for the fluid flow near a rigid wall. However, recent results have shown that certain instability mechanisms can lead to the rapid growth of infinitesimal disturbances, potentially calling into question the validity of Prandtl's ansatz in centain regimes. Therefore, investigating the stability properties of boundary layer profiles is crucial for identifying the conditions under which the Prandtl's theory remains valid.\\

In two-dimensional setting, the boundary layer exhibits three types of instabilities. The first one is the instability inherent to the Prandtl equation. G\'erard-Varet and Dormy \cite{GD} has shown that non-monotone profiles can induce instability in the Gevrey-$2$ class for the 2D Prandtl equation. The second type is induced by the inflection point within boundary layer profiles. In this case, the Rayleigh criterion fails, and the linearized Navier-Stokes equations become strongly ill-posed in non-analytic function spaces. For results on nonlinear instabilities of such profiles, see \cite{BG,G2000,GN}. As a consequence, the stability of generic boundary layers (or the inviscid limit) can only be established within the analytic framework; see, for example, \cite{FM-TT-2018,KVW,NN,YM2014,SC2,WC-WY-ZZ,WC-WY-ZZ1}. The third type of instability is driven by the destabilizing effect of small viscosity: for monotone and concave profiles that are inviscidly stable, the linearized Navier-Stokes system becomes unstable in the Gevrey-$\frac32$ class. This instability is closely related to the formation of Tollmien-Schlichting waves, which was rigorously justified for linearized Navier-Stokes equations by Grenier, Guo, and Nguyen  \cite{GGN-DMJ}, and in the nonlinear setting by Grenier and Nguyen \cite{GN1}. We also refer to \cite{BG2} for results on this type of instability in different physical settings. On the other hand, the validity of Prandtl expansion in the critical Gevrey-$\frac32$ space has been established by G\'erard-Varet-Maekawa-Masmoudi \cite{GMM-duke,GMM-arxiv}. We also refer to \cite{CWZ1} for results on $L^\infty$ stability up to the initial time. In the case of compressible fluids, T-S instabilty has been established for subsonic boundary layers \cite{MWWZ-TS-proc,YZ}, while the Mack mode has been demonstrated in supersonic boundary layers \cite{MWWZ-2024supersonic}. Finally, the aforementioned instabilities are absent in steady or symmetric flows. As a result, the stability of boundary layers can be established in Sobolev spaces; see \cite{GM,GI,IM,MT-2010} for related results. For a more comprehensive review of developments in boundary layer theory, we refer to the monograph by Maekawa and Mazzucato \cite{MZ}.
\\

Although tremendous progress has been achieved in the two-dimensional boundary layer theory, the mathematical analysis of three-dimensional boundary layers remains quite limited. In three dimensions, the (in)stability mechanism of boundary layers are more complicated due to the possible presence of secondary flows, as observed in physical experiments \cite{RS,SY1985,SRW,Sch}.  When the two tangential velocity profiles are linearly independent, Liu, Wang, and Yang \cite{LWY} constructed a growing mode in Gevrey-$2$ space for the 3D Prandtl equation. For the well-posedness theory in the case of dependent profiles, we refer to \cite{LWY1,LWY2}. Recently, Li, Masmoudi, and Yang \cite{LWX2022} established stability for the 3D Prandtl equation in the Gevrey-2 space. Remarkably, the critial Gevrey index for the stability of 3D Prandtl equation coincides with that of the 2D case. However, Gallaire, G\'erard-Varet, and Rousset \cite{GGR} demonstrated that certain special flows (without boundary) that are stable in two dimensions can become unstable for the 3D Navier-Stokes system. These developments motivate a natural question: at high Reynolds numbers, can three-dimensional boundary layer flows exhibit stronger instabilities than their two-dimensional counterparts? In this paper, we reveal an instability mechanism arising from the combined effect of secondary flow and three-dimensional perturbations, which is stronger than the classical Tollmien-Schlichting instabilty in two dimensions.

\subsection{Problem and Main results}

Consider the three-dimensional incompressible Navier-Stokes equations in the half-space $\Omega=\{(x,y,z)\in \mathbb{R}_+^3\mid~(x,y)\in\mathbb{R}^2, z>0\}:$
\begin{align}\label{NS}
    \left\{
    \begin{aligned}
        &\partial_t \textbf{u}^\nu+\textbf{u}^\nu\cdot\nabla \textbf{u}^\nu+\nabla p^\nu-\nu\Delta \textbf{u}^\nu=\textbf{F}^\nu,
        \\ &\nabla\cdot \textbf{u}^\nu=0,\\
         &\textbf{u}^\nu|_{z=0}=0.
    \end{aligned}
    \right.
\end{align}
Here $\textbf{u}^{\nu}=(u^\nu,v^\nu,w^\nu)$ and $p^\nu$ stand for the velocity field and pressure respectively. We are interested in dynamics at small viscosities (large Reynolds number). Hence, we set $0<\nu\ll1.$

In a local regime along the streamwise direction, the laminar boundary layer flow can be approximated by the following shear flow: $$\textbf{u}_s(t,x,y,z)=\left(u_s(\frac{z}{\sqrt{\nu}}),v_s(\frac{z}{\sqrt{\nu}}),0\right),$$ 
where $u_s$ and $v_s$ are boundary layer profiles in the streamwise and spanwise directions respectively. This is a stationary solution to the 3D Navier-Stokes system \eqref{NS}, provided that a suitable external forcing $\textbf{F}^\nu$ is imposed.

To understand the (in)stability properties of this base flow, we consider the following  linearized incompressible Navier–Stokes equations around $\textbf{u}_s$:
\begin{align}\label{eq:NS-linear}
    \left\{
    \begin{aligned}
        &\partial_{t}\textbf{v}^\nu+u_s(\frac{z}{\sqrt{\nu}})\partial_{x}\textbf{v}^\nu+v_s(\frac{z}{\sqrt{\nu}})\partial_{y}\textbf{v}^\nu
        +\left(\partial_{z}u_s(\frac{z}{\sqrt{\nu}}),\partial_{z}v_s(\frac{z}{\sqrt{\nu}}),0\right)w+\nabla q^\nu-\nu \Delta \textbf{v}^\nu=0,\\ %&\qquad\qquad+\left(\partial_{z}u_s(\frac{z}{\sqrt{\nu}}),\partial_{z}v_s(\frac{z}{\sqrt{\nu}}),0\right)w+\nabla q^\nu-\nu \Delta \textbf{v}^\nu=0,\\
         &\nabla\cdot \textbf{v}^\nu=0,\\ 
        &\textbf{v}^\nu|_{z=0}=0,%\textbf{v}^{\nu}(\infty)=0,
    \end{aligned}
    \right.
\end{align}
A classical approach to finding instabilities is to seek a wave solution to the linearized system \eqref{eq:NS-linear} of the form
\begin{align}
(\textbf{v}^{\nu}, q^\nu)(t,x,y,z)=e^{-\mathrm{i}\alpha c\tau}e^{\mathrm{i}\sigma  X+\mathrm{i}\beta Y}(\tilde{u},\tilde{v},\tilde{w}, \tilde{q})(Z),\label{ans}
\end{align}
where $(\tau,X,Y,Z)$ are rescaled variables defined by
\begin{align*}
	\tau=\frac{t}{\sqrt{\nu}},\quad X=\frac{x}{\sqrt{\nu}},\quad Y=\frac{y}{\sqrt{\nu}},\quad Z=\frac{z}{\sqrt{\nu}},
\end{align*}
and the parameters $\sigma$ and $\beta$ are the wave numbers in $X$ and $Y$ directions respectively. The complex number $c=c_r+\mathrm{i}c_i$ denotes the wave speed, and $\alpha\triangleq(\sigma^2+\beta^2)^\frac{1}{2}$ is the total wave number. Substituting the ansatz \eqref{ans} into \eqref{eq:NS-linear}, we obtain the following ODE system for the amplitude  $(\tilde{u},\tilde{v},\tilde{w}, \tilde{q})(Z)$:
\begin{align}\label{eq:LNS-sig-be}
	\left\{
	\begin{aligned}
		&(\frac{\sigma}{\alpha}u_s+\frac{\beta}{\alpha}v_s-c)\tilde{u}(Z)+\frac{1}{\mathrm{i}\alpha}
%(\mathrm{i}\alpha)^{-1}
\partial_Zu_s\tilde{w}(Z)+\frac{\sigma}{\alpha}\tilde{q}(Z)-\frac{\sqrt{\nu}}{\mathrm{ i}\alpha}(\partial_Z^2-\alpha^2)\tilde{u}(Z)=0,\\
		&(\frac{\sigma}{\alpha}u_s+\frac{\beta}{\alpha}v_s-c)\tilde{v}(Z)
+\frac{1}{\mathrm{i}\alpha}%(\mathrm{i}\alpha)^{-1}
\partial_Zv_s\tilde{w}(Z)+\frac{\beta}{\alpha}\tilde{q}(Z)-\frac{\sqrt{\nu}}{\mathrm{ i}\alpha}(\partial_Z^2-\alpha^2)\tilde{v}(Z)=0,\\
		&(\frac{\sigma}{\alpha}u_s+\frac{\beta}{\alpha}v_s-c)\tilde{w}(Z)
+\frac{1}{\mathrm{i}\alpha}%(\mathrm{i}\alpha)^{-1}
\partial_Z\tilde{q}(Z)-\frac{\sqrt{\nu}}{\mathrm{ i}\alpha}(\partial_Z^2-\alpha^2)\tilde{w}(Z)=0,\\
		&\mathrm{i}\sigma\tilde{u}(Z)+\mathrm{i}\beta\tilde{v}(Z)+\partial_Z\tilde{w}(Z)=0,\\
		&\tilde{u}(0)=\tilde{v}(0)=\tilde{w}(0)=\tilde{u}(\infty)=\tilde{v}(\infty)=\tilde{w}(\infty)=0.
	\end{aligned}
	\right.
\end{align}
If \eqref{eq:LNS-sig-be} admits a nontrivial solution for some wave numbers $\sigma, \beta\in \mathbb{R}$, and a wave speed $c\in \mathbb{C}$ with $c_i>0$, then the boundary layer profile $\textbf{u}_s$ is spectrally unstable. \\

We now state the main result of the paper. Let the streamwise boundary layer profile $u_s(Z)\in C^4(\overline{\mathbb{R}_+})$ such that
\begin{align}
	&u_s(0)=0,~~u_s'(0)>0,~~u_s(Z)>0 \text{ for }Z>0,\nonumber\\
	&\sum_{k=0}^4\sup_{Z\geq 0}\left|\partial_Z^k\big(u_s(Z)-1\big)e^{\eta_0Z}\right|<\infty,~~\text{ for some }\eta_0>0.\nonumber
\end{align}
\begin{theorem}\label{main-result}
   Let $0<\nu\ll 1$. There exists a class of spanwise profile $v_s$, together with wave numbers $\sigma,\beta\in \mathbb{R}$, and a wave speed $c=c_r+ic_i$ with $c_i>0$,
   such that the linearized incompressible Navier–Stokes equations  \eqref{eq:NS-linear} admit a nontrivial solution $(u,v,w)$ of the form 
   \begin{align*}
   	(u,v,w)(t,x,y,z)=e^{-\mathrm i\alpha c\nu^{-\frac{1}{2}}t}e^{\mathrm i\nu^{-\frac{1}{2}}  (\sigma x+\beta y)}\left(\tilde{u}(\frac{z}{\sqrt{\nu}}),\tilde{v}(\frac{z}{\sqrt{\nu}}),\tilde{w}(\frac{z}{\sqrt{\nu}})\right),
   \end{align*}
   where the amplitude functions
   $(\tilde{u},\tilde{v},\tilde{w})(Z)\in H^1(\mathbb{R}_+)$ solve the eigenvalue problem \eqref{eq:LNS-sig-be}. The parameters are taken in the regime
   \begin{align}\label{r}
   \sigma, \beta\ll 1,\qquad
   \alpha\triangleq(\sigma^2+\beta^2)^{\frac{1}{2}}\sim c_r,\qquad 0<c_i\sim \alpha^2.
   \end{align}
   In particular, the solution exhibits the following growth estimate:
   \begin{align}
   	|(u,v,w)(t,x,y,z)|\sim e^{\nu^{-\frac{1}{2}}t}.\label{gr}
   \end{align}
\end{theorem}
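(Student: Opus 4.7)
The plan is to reduce the system \eqref{eq:LNS-sig-be} to a scalar Orr--Sommerfeld equation for the vertical velocity $\tilde w$, to construct a Rayleigh-unstable eigenvalue of the resulting inviscid problem by exploiting the three-dimensional structure, and then to correct it to an exact viscous eigenmode by a wall sublayer adjustment and a fixed-point argument. Forming $i\sigma$ times the first equation of \eqref{eq:LNS-sig-be} plus $i\beta$ times the second, using $i\sigma\tilde u+i\beta\tilde v=-\partial_Z\tilde w$, and eliminating $\tilde q$ via the third momentum equation yields
\begin{equation*}
(U-c)(\partial_Z^2-\alpha^2)\tilde w - U''\tilde w \;=\; \tfrac{i\sqrt\nu}{\alpha}(\partial_Z^2-\alpha^2)^2 \tilde w, \qquad \tilde w(0)=\partial_Z\tilde w(0)=\tilde w(\infty)=0,
\end{equation*}
with the \emph{effective} base profile $U(Z):=\frac{\sigma u_s(Z)+\beta v_s(Z)}{\alpha}$. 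The tangential velocities $\tilde u,\tilde v$ are then recovered from $\tilde w$ through continuity and a forced Squire-type equation, without affecting the dispersion relation.

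The heart of the argument is to exhibit an unstable eigenvalue of the inviscid Rayleigh problem $(U-c)(\partial_Z^2-\alpha^2)\tilde w_0=U''\tilde w_0$ with $\tilde w_0(0)=\tilde w_0(\infty)=0$. This is where three-dimensionality enters: although $u_s$ alone may be monotone, convex, and Rayleigh-stable, we are free to choose $v_s$ and the wave vector $(\sigma,\beta)$, and since $u_s,v_s$ are linearly independent, $U''=(\sigma u_s''+\beta v_s'')/\alpha$ can be made to change sign at some $Z_*>0$, producing a genuine inflection point in $U$ that is absent in the 2D analogue. I would pick $v_s$ in a simple explicit class (for instance a perturbation of a Blasius-type profile with engineered concavity reversal), tune $\beta/\sigma$ so that $U$ has the required inflection and satisfies the Fj\"ortoft sign condition, and then, in the long-wave regime $\alpha\ll1$ with $c\sim\alpha$, carry out a matched asymptotic expansion: an outer solution built from the two inviscid fundamental solutions $U-c$ and $(U-c)\int dZ/(U-c)^2$ (with the standard Landau contour encoding $c_i>0$), matched across an inner critical-layer region at $Z_c$ where $U(Z_c)=c$. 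The resulting leading-order dispersion relation then has a root $c_0$ with $\mathrm{Im}(c_0)>0$ obeying $c_{0,r}\sim\alpha$, $c_{0,i}\sim\alpha^2$ as required by \eqref{r}.

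With the inviscid mode $(c_0,\tilde w_0)$ in hand, I would append an Airy-type wall sublayer of thickness $(\sqrt\nu/\alpha)^{1/3}$ whose amplitude cancels $\partial_Z\tilde w_0(0)$, together with a viscous smoothing of the logarithmic singularity of $\tilde w_0$ at $Z_c$. The resulting approximate solution has an $O(\sqrt\nu)$ residual in the Orr--Sommerfeld equation, and an implicit function theorem applied to the pair $(c,\tilde w)$ --- using the isolation of $c_0$ in the Rayleigh spectrum and a resolvent bound for the linearisation around $(c_0,\tilde w_0)$ --- then produces an exact eigenvalue $c_\nu=c_0+O(\sqrt\nu)$ with positive imaginary part, yielding the growth \eqref{gr}. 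The principal obstacle is the inviscid construction: the critical layer sits near the wall at $Z_c\sim\alpha$, forcing the matched asymptotics to be carried out across three scales (outer region, critical layer, and the near-wall region where $U$ is itself small), and one must track the contribution of the Landau singularity carefully enough to exhibit a root of the dispersion relation with $\mathrm{Im}(c_0)\sim\alpha^2$. Once this root is pinned down, the viscous step follows standard Orr--Sommerfeld perturbation technology.
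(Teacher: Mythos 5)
Your overall architecture matches the paper's: reduce to a scalar Orr--Sommerfeld problem for $\tilde w$ with effective profile $U=(\sigma u_s+\beta v_s)/\alpha$, exhibit a Rayleigh-unstable eigenvalue in the long-wave regime $c_r\sim\alpha\ll1$ with the critical layer near the wall, append a viscous wall sublayer that does not disturb the leading-order dispersion relation, and recover $(\tilde u,\tilde v)$ from $\tilde w$ via continuity and the vorticity/Squire equation. Two steps, however, deviate from what the estimates actually require. First, the mechanism: you ask for an interior inflection point in $U$ and the Fj\"ortoft sign condition, but the paper's structural hypothesis \eqref{SC}, arranged via Proposition \ref{pSC}, is convexity at the wall, $U'(0)>0$, $U''(0)>0$, $U>0$. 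Since $Z_c\to 0$ with $\alpha$, the imaginary part of the Landau contribution in $\varphi_{Ray}(0;c)$ scales like $\pi U''(Z_c)/U'(Z_c)^3\,\alpha c_r\to\pi U''(0)/U'(0)^3\,\alpha c_r$ (Lemmas \ref{lem:ray-main-asy} and \ref{lem:ray-R-asy}), so it is the sign of $U''$ \emph{at the wall}, not at any interior $Z_*$, that produces $c_i>0$ in Theorem \ref{iv}. For the monotone single-inflection profiles in play the two statements are equivalent, but organizing the construction around $Z_*$ is the wrong guide, since the critical layer never sits near $Z_*$.

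Second, the sublayer scaling. The Airy thickness $(\sqrt\nu/\alpha)^{1/3}$ is not the right balance here. Because $Z_c\sim\alpha$ is $O(1)$ in $\nu$ and far outside any viscous scale, inside the sublayer $U-c\approx -c$, so the correct balance is $\varepsilon\partial_Z^2\phi\sim -c\,\phi$: a Stokes-type exponential $e^{-\omega Z}$ with $\omega=(-c/\varepsilon)^{1/2}$ and thickness $(|\varepsilon|/|c|)^{1/2}\sim\nu^{1/4}$ (Proposition \ref{osf}). With your thickness $\sim\nu^{1/6}$ the retained term $\varepsilon\partial_Z^2\phi\sim\nu^{1/6}\|\phi\|$ is dominated by the dropped term $c\,\phi\sim\alpha\|\phi\|$, so the ansatz is not an approximate solution and the residual does not close. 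Moreover, the $\nu^{1/4}$ scaling is precisely what gives $|\partial_Z\phi_s(0)/\partial_Z\phi_f(0)|\sim\nu^{1/4}$, so the fast mode is subdominant in the dispersion relation --- the structural feature distinguishing this instability from the 2D Tollmien--Schlichting slow/fast-mode interaction. The remaining differences (explicit Rayleigh fundamental solutions \eqref{eq:ray-main-def} versus matched asymptotics, Rouch\'e versus an implicit function theorem) are interchangeable implementation choices.
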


Here we give several remarks on Theorem \ref{main-result}.

\begin{remark}
Our result holds for a class of exponential profiles:
 	\begin{align}
 	u_s(Z)=1-e^{-k_{u}Z},~v_s(Z)=v_{\infty}(1-e^{-k_vZ}),\nonumber
 \end{align}
where parameters $k_u,k_v>0$, and $v_{\infty}\in\mathbb{R}$ satisfy
$$k\triangleq\frac{k_v}{k_u}>1, ~~\text{and}~~ k|v_{\infty}|\geq \max\left\{1,\sqrt{\frac{2}{k-1}}\right\}.
$$
Note that base flows $u_s(Z)$ and $v_s(Z)$ can be uniformly concave. Therefore, the instability arises from the interplay between \textbf{spanwise flow} and \textbf{three-dimensional disturbances}, and does not occur in two-dimensional configurations.
\end{remark}
\begin{remark} 	
	The structural conditions for $v_s$ will be specified in Proposition \ref{pSC}. The main assumption is
	\begin{align}
		\partial_Z^2u_s(0)\partial_Zv_s(0)\neq\partial_Z^2v_s(0)\partial_Zu_s(0),\nonumber
	\end{align}
	which implies that the streamwise and spanwise velocity profiles are linearly independent, leading to the emergence of secondary flows near the boundary.  On the other hand, if $u_s$ and $v_s$ are linearly dependent,  the instability mechanism for \eqref{eq:NS-linear} coincides with that of the two-dimensional case.  
\end{remark}

\begin{remark}        
   The growing mode is supported in high frequency regime $n=\frac{\alpha}{\sqrt{\nu}}\sim \nu^{-\frac{1}{2}}$ and grows at the rate $e^{Cnt}$. Therefore, the Navier-Stokes system exhibits an  \textbf{analytic instability} around generic three-dimensional boundary layer profiles, which is stronger than the classical Tollmien-Schlichting instability observed in two dimensions \cite{GGN-DMJ}. In particular, the validity of Prandtl expansion in three dimensions cannot be expected without assuming analytic regularity. This stands in sharp contrast to the two-dimensional case, where validity can be established for data in the Gevrey-$\frac32$ class \cite{GMM-duke}.   Moreover, this instability can be detected in both Euler and Navier-Stokes equations (with small viscosity).
 \end{remark}

    \begin{remark} Let $\delta=\|v_s\|_{L^\infty}$ denote the amplitude of the base flow in spanwise direction $v_s$. Instability occurs for any $\delta>0$, regardless of how small it is. This leads to the following bifurcation scenario:
    	\begin{align}
    	&\delta=0 \Rightarrow \text{Stability in the Gevrey-$3/2$ class for concave profiles $u_s$};\nonumber\\
    	&\delta>0\Rightarrow \text{Instability occurs in the analytic space}.\nonumber
    	\end{align}
      \end{remark}   
  
%      \begin{remark} 
%      	The growth rate \eqref{gr} is of the same order as that of the solution operator for the linearized Navier-Stokes system \eqref{eq:NS-linear}. Therefore, by applying Grenier's higher-order approximation procedure \cite{G2000}, our result on spectral instability may imply nonlinear instability up to $\mathcal{O}(\nu^{\frac14})$. Achieving amplitudes of order $\mathcal{O}(1)$ would require significant generalizations of approaches developed by Grenier and Nguyen \cite{GN} to three-dimensional setting, which remains open yet.
%      	\end{remark}

\subsection{Instability mechanism and roadmap of proof} Let us describe the instability mechanism. In contrast to the two-dimensional case, the three-dimensional Navier-Stokes system \eqref{eq:LNS-sig-be} is more challenging to analyze due to the absence of a stream function representation. Inspired by  \cite{CWZ3DCoutte}, we observe that the vertical velocity component $\tilde{w}$ is decoupled from the other variables. Indeed,  by acting the operator $(\partial_Z^2-\alpha^2)$ to the third equation of $\eqref{eq:LNS-sig-be}$, we obtain 
\begin{align*}
	&\left(\frac{\sigma}{\alpha}u_s(Z)+\frac{\beta}{\alpha}v_s(Z)-c\right)(\partial_Z^2-\alpha^2)\tilde{w}+2\partial_Z\left(\frac{\sigma}{\alpha}u_s(Z)+\frac{\beta}{\alpha}v_s(Z)\right)\partial_Z\tilde{w}\\ &\qquad+\partial_Z^2\left(\frac{\sigma}{\alpha}u_s(Z)+\frac{\beta}{\alpha}v_s(Z)\right)\tilde{w}
+\frac{1}{\mathrm{i}\alpha}%(\mathrm{i}\alpha)^{-1}
(\partial_Z^2-\alpha^2)\partial_Z\tilde{q}-\frac{\sqrt{\nu}}{\mathrm{i}\alpha}(\partial_Z^2-\alpha^2)^2\tilde{w}=0.
\end{align*}
Then taking divergence of the velocity equations in \eqref{eq:LNS-sig-be}, we get
\begin{align*}	-\frac{1}{\mathrm{i}\alpha}%(\mathrm{i}\alpha)^{-1}
(\partial_Z^2-\alpha^2)\partial_Z\tilde{q}=2\partial_Z\left(\frac{\sigma}{\alpha}u_s(Z)+\frac{\beta}{\alpha}v_s(Z)\right)\partial_Z\tilde{w}+2\partial_Z^2\left(\frac{\sigma}{\alpha}u_s(Z)+\frac{\beta}{\alpha}v_s(Z)\right)\tilde{w}.
\end{align*}
By adding the above two equations and incorporating the boundary values of $\tilde{w}$, we derive
the following Orr-Sommerfeld equation for $\tilde{w}$:
\begin{align}\label{eq:OS-w}
    \left\{
    \begin{aligned}
		&\mathrm{OS}[\tilde{w}]:=-\varepsilon(\partial_Z^2-\alpha^2)^2\tilde{w}+(U-c)(\partial_Z^2-\alpha^2)\tilde{w}-\partial_Z^2U\tilde{w}=0,\\
		&\tilde{w}(0)=\partial_Z\tilde{w}(0)=\tilde{w}(\infty)=\partial_Z\tilde{w}(\infty)=0,
    \end{aligned}
    \right.
\end{align}
where $\varepsilon\triangleq\frac{\sqrt{\nu}}{\mathrm{ i}\alpha}$. In \eqref{eq:OS-w}, 
\begin{align}	U(Z)=\frac{\sigma}{\alpha}u_s(Z)+\frac{\beta}{\alpha}v_s(Z)\label{U}
\end{align}
is a new base flow induced by the three-dimensional perturbation. Hence, a physical interpretation of our result is that the three-dimensional perturbation produces a strong distortion of the base flow, which triggers an instability mechanism. In two-dimensional settings,  this type of instability does not occur, as both $u_s$ and $v_s$ can be uniformly concave.\\

In Section \ref{sec2}, we specify structural conditions on the spanwise profile $v_s$ so that the resulting base flow $U(Z)$ satisfies 
\begin{align}\label{SC}
	\partial_ZU(0)>0,~~\partial_Z^2U(0)>0,~~\text{and }U(Z)>0 \text{ for }Z>0.
\end{align}
Under these conditions, instability arises in the inviscid equation, and persists in the viscous equation with small viscosities. Further details are provided in Sections \ref{sec-Ray} and \ref{sec-OS} respectively.\\

We now briefly outline the proof.

\begin{itemize}
	\item \underline{Step 1. Inviscid unstable mode. } In Section \ref{sec-Ray}, we construct a homogeneous solution $\varphi_{Ray}(Z)$ exhibiting growth for the Rayleigh equation
	$$\mathrm{Ray}[\varphi_{Ray}]\triangleq(U-c)(\partial_Z^2-\alpha^2)\varphi_{Ray}-U''\varphi_{Ray}=0.
	$$
	Although the classical Rayleigh's criterion is violated due to the convexity condition $U''(0)>0$ in \eqref{SC}, constructing an unstable mode is still very challenging. Compared to  \cite{GGN-DMJ}, our analysis requires a more precise pointwise estimate for $\varphi_{Ray}$. In particular, capturing the inviscid instability requires a more accurate second-order asymptotic expansion of $\varphi_{Ray}(0)$ with respect to small parameters $\alpha$ and $c$.  This is the main technical difficulty of this section. After  careful analysis, we obtain the following asymptotic expansion:
	\begin{align}
	~~~~\varphi_{Ray}(0;c)=-\frac{c_r}{U_{\infty}}+\frac{\alpha}{U'(0)}+O(\alpha^2|\log\alpha|)+\mathrm{ i}\left(-\frac{c_i}{U_\infty^2}+\frac{U''(0)\pi}{U'(0)^3}\alpha c_r+o(\alpha^2)\right),\nonumber
	\end{align}
when $\alpha$ and $c=c_r+ic_i$ are in the regime \eqref{r}.  Thanks to the convexity $U''(0)>0$, we find an unstable eigenvalue 
	\begin{align}
	c_{Ray}\approx\frac{\alpha U_{\infty}}{U'(0)}+\mathrm{ i}\frac{\alpha^2\nonumber U''(0)U_{\infty}^4\pi}{U'(0)^4}.
	\end{align}
	 This implies an inviscid instability of boundary layer profiles under three-dimensional perturbations. The detail will be provided in Theorem \ref{iv}.\\

\item \underline{Step 2. Viscous instability. } Note that $\varphi_{Ray}$ is only an approximate solution to the Orr-Sommerfeld equation, and does not satisfy the full boundary conditions in \eqref{eq:OS-w}. Therefore, it is necessary to construct a homogeneous Orr-Sommerfeld solution $\phi_s(Z;c)$ near $\varphi_{Ray}$, as well as a viscous boundary layer correction $\phi_f(Z;c)$ around an exponential profile. The idea is to use the Rayleigh-Airy iteration used in \cite{GGN-DMJ,GMM-duke} to eliminate errors arising from these approximations. Compared to \cite{GGN-DMJ,GMM-duke}, in our problem the convergence of this iteration must be justified in the regime $\alpha,c\sim O(1)$, and in the presence of inflection points within the profile. Finally, we seek an Orr-Sommerfeld solution $\phi$ of the form 
\begin{align}
	\phi(Z;c)=\phi_{s}(Z;c)-\frac{\partial_Z\phi_s(0;c)}{\partial_Z\phi_f(0;c)}\phi_f(Z;c).\nonumber
\end{align}
The boundary conditions in \eqref{eq:OS-w} yield the following dispersion relation:
\begin{align}
	\phi(0;c)=\phi_s(0;c)-\frac{\partial_Z\phi_s(0;c)}{\partial_Z\phi_f(0;c)}\phi_f(0;c)=0.\nonumber
\end{align}
Thanks to the boundary layer structure of $\phi_f$, the factor
$$\frac{\partial_Z\phi_s(0;c)}{\partial_Z\phi_f(0;c)}\sim O(\nu^{\frac14}).
$$	
As a result, the viscous boundary layer does not affect the leading-order dispersion relation, so that an unstable eigenvalue can be constructed near $c_{Ray}$. This stands in sharp contrast to the two-dimensional Tollmien-Schlichting instability \cite{GGN-DMJ}, which is driven by the interaction between slow and fast modes. The detail will be given in Theorem \ref{thm-OSeq-solution}. \\

\item \underline{Step 3. Recover tangential velocity components.} Let $\Psi\triangleq \mathrm{i}\beta \tilde{u}-\mathrm{i}\sigma \tilde{v}$ denote the vorticity around the $Z$-axis. The tangential velocity components  $\tilde{u}$ and $\tilde{v}$ can be recovered in terms of the vertical component $\tilde{w}$ through the following relations:
\begin{equation}\label{eq:Airy-vorticity}
	\left\{
	\begin{aligned}
		&\mathrm{Airy}[\Psi]\triangleq\varepsilon(\partial_Z^2-\alpha^2)\Psi-(U-c)\Psi=(-\frac{\sigma}{\alpha}\partial_Zv_s+\frac{\beta}{\alpha}\partial_Z u_s)\tilde{w},\\
		&\Psi(0)=\Psi(\infty)=0,
	\end{aligned}
	\right.
\end{equation}
and
\begin{equation}
	\mathrm{i}\sigma\tilde{u}+\mathrm{i}\beta\tilde{v}=-\partial_Z\tilde{w}.\nonumber
\end{equation}
From these two equations, we can establish the $H^1$-bounds of $(\tilde{u},\tilde{v},\tilde{w})$. The detail will be given in Section \ref{sec-recover}.

\end{itemize}
\bigskip

The paper is organized as follows: Section \ref{sec2} elaborates on the structural conditions for $v_s$ and provides specific examples that satisfy these conditions. In Section \ref{sec-Ray}, we construct the homogeneous Rayleigh solution $\varphi_{Ray}$, and prove the inviscid instability. Section \ref{sec-OS} is devoted to constructing the homogeneous Orr-Sommerfeld solution $\phi_s$ near $\varphi_{Ray}$, as well as a viscous sublayer $\phi_f$, and to proving the viscous instability.  The recovery of tangential velocity components is discussed in Section \ref{sec-recover}. \\

{\bf Notations and convention:} Throughout the paper,  $C$ denotes a generic positive constant, which may vary from line to line.  We write $A=O(1)B$ to indicate that there exists a generic constant $C$ such that $A\leq CB$, and $A=o(1)B$ to indicate this constant $C$ be made arbitrarily small. For any $z\in \mathbb{C}\setminus \mathbb{R}_-$,  we take the principle analytic branch of $\log z$ and $z^k, k\in (0,1)$, i.e.
$$\log z\triangleq \log|z|+i\text{arg}z,~ z^k\triangleq|z|^ke^{ik \text{arg} z},~\text{arg}z\in (-\pi,\pi].$$
%%%%%%%%%%%%%%%%%%%%%%%%%%%%%%%%%%%
\section{Structural conditions for velocity profiles}\label{sec2}

In this section, we specify structural conditions on the spanwise boundary layer profile $v_s(Z)$. Let $v_s(Z)\in C^4(\overline{\mathbb{R}_+})$ such that
	\begin{align}
			v_s(0)=0,~ \lim\limits_{Z\to\infty}v_s(Z)=v_\infty, ~ \sum_{k=0}^{4}\sup_{Z\geq0}\left|\partial_Z^k(v_s(Z)-v_{\infty})e^{\eta_0 Z}\right|<+\infty,\nonumber
	\end{align}
for some constants $v_\infty\in \mathbb{R},$ and  $\eta_0>0$. We denote
	\begin{align*}
	\Upsilon_u=\partial_Z^2u_s(0)\partial_Zu_s(0),\quad \Upsilon_v=\partial_Z^2v_s(0)\partial_Zv_s(0),\quad \Upsilon_m=\partial_Z^2u_s(0)\partial_Zv_s(0)+\partial_Z^2v_s(0)\partial_Zu_s(0).
	\end{align*}
Recall the modified base flow $U(Z)$ defined in \eqref{U}.
\begin{proposition}\label{pSC}
	If $u_s$ and $v_s$ satisfy the independence condition 
	\begin{align}
		\partial_Z^2u_s(0)\partial_Zv_s(0)\neq \partial_Z^2v_s(0)\partial_Zu_s(0),\label{np}
	\end{align}
	 and 
	\begin{align}
	 \max\left\{\|\frac{v_s}{u_s}\|_{L^\infty},|\frac{\partial_Zv_s(0)}{\partial_Zu_s(0)}|\right\}< |\Upsilon_m|^{-1}\left(\sqrt{(\Upsilon_u-\Upsilon_v)^2+\Upsilon_m^2}+\Upsilon_u-\Upsilon_v\right),\label{s}
	\end{align}
then there exist constants $\lambda_1$ and $\lambda_2$ such that $U(Z)=\lambda_1 u_s(Z)+\lambda_2v_s(Z)$ satisfies  \eqref{SC}.
\end{proposition}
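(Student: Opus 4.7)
The plan is to exploit the homogeneity of \eqref{SC} in $(\lambda_1,\lambda_2)$ and reduce to a one-parameter problem. Normalising $\lambda_1>0$ (the case $\lambda_1<0$ being symmetric) and setting $\mu:=\lambda_2/\lambda_1$, I recast \eqref{SC} as the existence of $\mu\in\mathbb{R}$ satisfying
\begin{align*}
\text{(C1)}\ u_s'(0)+\mu v_s'(0)>0,\ \ \text{(C2)}\ u_s''(0)+\mu v_s''(0)>0,\ \ \text{(C3)}\ u_s(Z)+\mu v_s(Z)>0\ \forall Z>0.
\end{align*}

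First, by the independence hypothesis \eqref{np}, the two affine functions of $\mu$ in (C1) and (C2) are not proportional, and have distinct roots $\mu_1:=-u_s'(0)/v_s'(0)$ and $\mu_2:=-u_s''(0)/v_s''(0)$ (with the obvious interpretation if either denominator vanishes). Hence the set $I:=\{\mu:(\text{C1})\text{ and }(\text{C2})\}$ is a non-empty open interval or half-line. Second, since $u_s>0$ on $(0,\infty)$, condition (C3) reads $1+\mu(v_s/u_s)(Z)>0$ for all $Z>0$. Because $v_s/u_s$ extends continuously to $[0,\infty]$ with limits $v_s'(0)/u_s'(0)$ at $0$ and $v_\infty$ at infinity (using $u_s'(0)>0$ and the exponential decay of $v_s-v_\infty$), the quantity $M:=\|v_s/u_s\|_{L^\infty(\mathbb{R}_+)}$ is finite and a sufficient condition for (C3) is $\mu\in J:=(-M^{-1},M^{-1})$.

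The task reduces to showing $I\cap J\neq\emptyset$ under \eqref{s}. A direct computation gives
\begin{align*}
U'(0)\,U''(0)=\lambda_1^2\bigl(\Upsilon_u+\Upsilon_m\,\mu+\Upsilon_v\,\mu^2\bigr),
\end{align*}
whose $\mu$-discriminant equals $\Upsilon_m^2-4\Upsilon_u\Upsilon_v=(u_s'(0)v_s''(0)-u_s''(0)v_s'(0))^2$, strictly positive by \eqref{np}. Denoting the right-hand side of \eqref{s} by $T$, an elementary rationalisation yields
\begin{align*}
T=\frac{\sqrt{(\Upsilon_u-\Upsilon_v)^2+\Upsilon_m^2}+(\Upsilon_u-\Upsilon_v)}{|\Upsilon_m|}=\frac{|\Upsilon_m|}{\sqrt{(\Upsilon_u-\Upsilon_v)^2+\Upsilon_m^2}-(\Upsilon_u-\Upsilon_v)},
\end{align*}
identifying $T$ as the positive root of $|\Upsilon_m|x^2-2(\Upsilon_u-\Upsilon_v)x-|\Upsilon_m|=0$. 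The two parts of \eqref{s} then translate into $M<T$ (i.e.\ $T^{-1}<M^{-1}$, so $T^{-1}\in J$) and $|v_s'(0)/u_s'(0)|<T$ (i.e.\ $|\mu_1|>T^{-1}$). I would then exhibit an explicit $\mu^*\in I\cap J$ of magnitude $\sim T^{-1}$ through a case analysis on the signs of $v_s'(0),v_s''(0)$ (recalling $u_s'(0)>0$), and set $(\lambda_1,\lambda_2)=(1,\mu^*)$.

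The main obstacle will be this sign case analysis, which is forced by the proposition allowing $u_s$ (and $v_s$) to be concave: when $u_s''(0)<0$, (C2) fails at $\mu=0$ and $\mu^*$ must be driven away from the origin, putting it into direct tension with (C3). The inequality \eqref{s} is precisely the quantitative threshold making this compromise feasible, and once the correct $\mu^*$ is selected in each of the finitely many sign cases for $(v_s'(0),v_s''(0))$, the verification of (C1)--(C3) reduces to a short sign check.
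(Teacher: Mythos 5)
Your normalisation $\lambda_1=1$, $\mu=\lambda_2/\lambda_1$ is cosmetic, and the reduction to conditions (C1)--(C3) together with the identification of $T$ (the right-hand side of \eqref{s}) as the controlling threshold and of $\mu^*\sim T^{-1}$ as the right scale is exactly what the paper does. But there are two genuine problems with the write-up.

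First, the assertion that ``the set $I:=\{\mu:(\text{C1})\text{ and }(\text{C2})\}$ is a non-empty open interval or half-line'' does \emph{not} follow from \eqref{np} alone, and it is in fact false without \eqref{s}. Condition \eqref{np} only says the roots $\mu_1,\mu_2$ are distinct; each affine inequality cuts out a half-line, and the two half-lines may point in opposite directions. For example $u_s'(0)=v_s'(0)=1$, $u_s''(0)=-4$, $v_s''(0)=-2$ gives (C1) for $\mu>-1$ and (C2) for $\mu<-2$, so $I=\emptyset$, while \eqref{np} holds. (This is not a counterexample to the proposition because \eqref{s} fails here, but it does break your intermediate claim.) The non-emptiness of $I$, and indeed of $I\cap J$, must be a \emph{conclusion} drawn from \eqref{s}, not a preliminary observation; as written the logic is circular.

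Second, and more importantly, the crux of the proof — exhibiting $\mu^*$ and checking (C1)--(C3) — is left as ``a short sign check'' and a ``case analysis on the signs of $v_s'(0),v_s''(0)$''. That case analysis is messier than you suggest, and in fact no case split is needed: the paper selects $\lambda_1=\sqrt{(1+\Theta)/2}$, $\lambda_2=\operatorname{sgn}(\Upsilon_m)\sqrt{(1-\Theta)/2}$ with $\Theta=(\Upsilon_u-\Upsilon_v)/\sqrt{(\Upsilon_u-\Upsilon_v)^2+\Upsilon_m^2}$ (equivalently $\mu^*=\operatorname{sgn}(\Upsilon_m)\,T^{-1}$), and the quadratic form you wrote down then collapses to the sign-free identity
\begin{equation*}
U'(0)\,U''(0)=\tfrac12\Big(\Upsilon_u+\Upsilon_v+\sqrt{(\Upsilon_u-\Upsilon_v)^2+\Upsilon_m^2}\Big),
\end{equation*}
which is strictly positive because $(\Upsilon_u-\Upsilon_v)^2+\Upsilon_m^2=(\Upsilon_u+\Upsilon_v)^2+\big(u_s''(0)v_s'(0)-v_s''(0)u_s'(0)\big)^2>(\Upsilon_u+\Upsilon_v)^2$ by \eqref{np}. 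Combined with $U'(0)>0$ (your observation $|\mu_1|>T^{-1}$) and $U(Z)>0$ from $M<T$, this gives $U''(0)>0$ directly. So the approach is sound and essentially the paper's, but the proposal as written contains a false intermediate assertion and stops short of the computation that actually closes the argument.
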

\begin{proof}
 Set $\Theta\triangleq\frac{\Upsilon_u-\Upsilon_v}{\sqrt{(\Upsilon_u-\Upsilon_v)^2+\Upsilon_m^2}}$.  Then we define
\begin{align}
	\lambda_1=\sqrt{\frac{1}{2}(1+\Theta)},~~	\lambda_2=\mathrm{sgn}(\Upsilon_{m})\sqrt{\frac{1}{2}(1-\Theta)}.\nonumber
\end{align}
 Using \eqref{s}, we deduce that for any $Z>0$, 
	\begin{align*}
		U(Z)\geq u_s\left(\lambda_1-|\lambda_2|\|\frac{v_s}{u_s}\|_{L^\infty}\right)> u_s
		\left(\sqrt{\frac12(1+\Theta)}-\sqrt{\frac{1}{2}(1-\Theta)}\frac{1+\Theta}{\sqrt{1-\Theta^2}}\right)=0.
	\end{align*}
In particular, $U(Z)>0$ for any $Z>0$. Similarly, $$ \partial_ZU(0)\geq \partial_Zu_s(0)\left(\lambda_1-|\lambda_2||\frac{v_s'(0)}{u_s'(0)}|\right)>0.
$$
Moreover, note that
	\begin{align*}
		\partial_ZU(0)\partial_Z^2U(0) &= \frac{1}{2}\left(\Upsilon_u+\Upsilon_v+\sqrt{(\Upsilon_u-\Upsilon_v)^2+\Upsilon_m^2}\right)\\
		&=\frac{1}{2}\left(\Upsilon_u+\Upsilon_v+\sqrt{(\Upsilon_u+\Upsilon_v)^2+
			\Big(\partial_Z^2u_s(0)\partial_Zv_s(0)-\partial_Z^2v_s(0)\partial_Zu_s(0)\Big)^2}\right).
	\end{align*}
Thanks to the independence condition \eqref{np}, we have $\partial_ZU(0)\partial_Z^2U(0)>0$, which implies 
	\begin{align*}
		& \partial_Z^2U(0)>0.
	\end{align*}
The proof of Proposition \ref{pSC} is complete.
\end{proof}
The simplest example of instability is given by the following exponential profile:
	\begin{align}
	\label{exp}
	u_s(Z)=1-e^{-k_{u}Z},~v_s(Z)=v_{\infty}(1-e^{-k_vZ})
\end{align}
with some constants $k_u,k_v>0$. We denote $k\triangleq\frac{k_v}{k_u}$ as the ratio of change rates of $v_s$ and $u_s$. 
\begin{lemma}\label{lmexp}
If $k>1$ and $k|v_{\infty}| \geq \max\{1,\sqrt{\frac{2}{k-1}}\}$, then the exponential profile \eqref{exp} satisfies \eqref{np} and \eqref{s}.
\end{lemma}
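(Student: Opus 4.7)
\textbf{Proof plan for Lemma \ref{lmexp}.} The plan is to verify the two conditions \eqref{np} and \eqref{s} by direct calculation on the exponential profiles, using the structural hypotheses $k>1$ and $k|v_\infty|\geq \max\{1,\sqrt{2/(k-1)}\}$ only where needed to obtain a clean sufficient bound.

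\medskip

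First, I would compute the boundary data: $\partial_Z u_s(0)=k_u$, $\partial_Z^2 u_s(0)=-k_u^2$, $\partial_Z v_s(0)=k_v v_\infty$, $\partial_Z^2 v_s(0)=-k_v^2 v_\infty$. A direct substitution gives
\begin{align*}
\partial_Z^2 u_s(0)\partial_Z v_s(0)-\partial_Z^2 v_s(0)\partial_Z u_s(0)=k_u^2 k_v(k-1)v_\infty,
\end{align*}
which is nonzero whenever $k\ne 1$ and $v_\infty\ne 0$. The hypothesis $k>1$ together with $k|v_\infty|\geq 1$ guarantees both, so \eqref{np} holds.

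\medskip

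Next, I would compute $\|v_s/u_s\|_{L^\infty}$ and $|\partial_Zv_s(0)/\partial_Zu_s(0)|$. After the substitution $a=e^{-k_u Z}\in(0,1)$, the ratio becomes $v_s/u_s=v_\infty\, g(a)$ with $g(a)=(1-a^k)/(1-a)$. A short calculation gives $g'(a)=h(a)/(1-a)^2$ with $h(a)=1-ka^{k-1}+(k-1)a^k$. Since $h(1)=0$ and $h'(a)=k(k-1)a^{k-2}(a-1)<0$ on $(0,1)$ for $k>1$, one has $h>0$ on $(0,1)$, hence $g$ is strictly increasing with $\lim_{a\to 0^+}g=1$ and $\lim_{a\to 1^-}g=k$. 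Therefore $\|v_s/u_s\|_{L^\infty}=k|v_\infty|$, and the derivative ratio is also exactly $k|v_\infty|$, so the LHS of \eqref{s} equals $k|v_\infty|$.

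\medskip

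Finally, I would compute
\begin{align*}
\Upsilon_u=-k_u^3,\quad \Upsilon_v=-k^3 k_u^3 v_\infty^2,\quad \Upsilon_m=-k(1+k)k_u^3 v_\infty,
\end{align*}
so that after factoring out $k_u^3$ the RHS of \eqref{s} reduces to
\begin{align*}
\mathrm{RHS}=\frac{\sqrt{A^2+B^2}+A}{k(1+k)|v_\infty|},\qquad A:=k^3v_\infty^2-1,\quad B:=k(1+k)v_\infty.
\end{align*}
The condition $k|v_\infty|\geq 1$ with $k>1$ yields $k^3v_\infty^2\geq k>1$, so $A>0$. Since $v_\infty\ne 0$ we have $B\ne 0$, hence $\sqrt{A^2+B^2}>A$, giving the strict lower bound
\begin{align*}
\mathrm{RHS}>\frac{2A}{k(1+k)|v_\infty|}=\frac{2(k^3v_\infty^2-1)}{k(1+k)|v_\infty|}.
\end{align*}
It then suffices to show $k|v_\infty|\leq 2(k^3v_\infty^2-1)/[k(1+k)|v_\infty|]$, which is equivalent to $k^2v_\infty^2(k-1)\geq 2$, i.e. to the assumption $k|v_\infty|\geq \sqrt{2/(k-1)}$. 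This completes \eqref{s}.

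\medskip

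\textbf{Main obstacle.} The only genuinely non-trivial ingredient is the computation of $\|v_s/u_s\|_{L^\infty}$, since both numerator and denominator vanish at $Z=0$ and one must rule out an interior maximum. This is handled via the sign analysis of $h$ above. The rest is bookkeeping algebra, with the choice to bound $\sqrt{A^2+B^2}$ below by $A$ being exactly what produces the clean sufficient threshold $k|v_\infty|\geq\sqrt{2/(k-1)}$ appearing in the hypothesis.
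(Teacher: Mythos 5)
Your proof is correct and follows essentially the same route as the paper's: verify \eqref{np} from the nonvanishing of $k_u^2 k_v(k-1)v_\infty$, bound both quantities in the maximum on the left side of \eqref{s} by $k|v_\infty|$, and lower-bound the right side via $\sqrt{A^2+B^2}+A > 2A$ combined with the reduction of $2A/(k(1+k)|v_\infty|) \geq k|v_\infty|$ to the hypothesis $k^2 v_\infty^2(k-1)\geq 2$; this is exactly the chain of inequalities in the paper. You are a bit more careful than the paper in noting that \eqref{np} also requires $v_\infty\neq 0$, which you correctly extract from $k|v_\infty|\geq 1$. The one substantive divergence is the $L^\infty$ bound on $v_s/u_s$: you establish the sharp equality $\|v_s/u_s\|_{L^\infty}=k|v_\infty|$ via a monotonicity analysis of $g(a)=(1-a^k)/(1-a)$, whereas the paper writes
\begin{align*}
\frac{v_s(Z)}{u_s(Z)}=\frac{v_\infty k_v}{k_u}\cdot\frac{\int_0^Z e^{-k_v Z'}\,dZ'}{\int_0^Z e^{-k_u Z'}\,dZ'}
\end{align*}
and simply observes that for $k>1$ the integrand comparison $e^{-k_v Z'}\leq e^{-k_u Z'}$ makes the ratio of integrals at most $1$, yielding $\|v_s/u_s\|_{L^\infty}\leq k|v_\infty|$ in one line. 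The coarser one-sided bound is all that is used in \eqref{s}; your sharper computation is correct and shows the bound is optimal, but it is more work than the argument requires.
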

\begin{proof}
Note that $k>1$ implies \eqref{np}, and it is suffcient to show \eqref{s}. Note that $|\frac{v_s'(0)}{u_s'(0)}|=k|v_\infty|$ and
	$$\|\frac{v_s}{u_s}\|_{L^\infty}\leq \frac{|v_{\infty}|k_v}{k_u}\sup_{Z\geq 0}\frac{\int_{0}^Ze^{-k_vZ'}d Z'}{\int_{0}^Ze^{-k_uZ'}d Z'}\leq k |v_{\infty}|,$$
	where we have used $k>1$ in the first step. Moreover, a direct computation yields that
	$$\text{R.H.S. of \eqref{s} }>\frac{2(k^3v_{\infty}^2-1)}{k|v_\infty|(1+k)}\geq \frac{2k(kv_{\infty})^2-(k-1)(kv_{\infty})^2}{k|v_\infty|(1+k)}\geq k|v_{\infty}|\geq \text{L.H.S. of \eqref{s} }.
	$$
	The proof of Lemma \ref{lmexp} is complete.
\end{proof}
\begin{remark}
	This Lemma implies that if the change rate of the spanwise profile $v_s$ exceeds that of the streamwise profile $u_s$, then the three-dimensional shear flow becomes unstable.
\end{remark}
\section{The Rayleigh equation}\label{sec-Ray}

In this section, we construct a homogeneous solution to the following  Rayleigh equation:
\begin{align}\label{eq:Ray-homo}
	\left\{
	\begin{aligned}
		&\mathrm{Ray}[\varphi]=(U(Z)-c)(\partial_Z^2-\alpha^2)\varphi (Z)-U''(Z)\varphi (Z)=0,\quad Z>0,\\ &\varphi(\infty)=0,
	\end{aligned}
	\right.	
\end{align}
where $U(Z)$ is the new base flow defined in \eqref{U}. Thanks to the structural condition \eqref{SC}, we can find positive constants $Z_0$ and $M_0$, such that
\begin{align}\label{eq:Hyper-U}
	\partial_ZU(Z),~ \partial_Z^2U(Z)\geq M_0 ~\mbox{for}~0\leq Z\leq Z_0,~~\text{and }~~U(Z)\geq M_0 ~\mbox{for}~Z\geq Z_0.
\end{align}
Moreover, there exist positive constants $\eta_0$ and $U_\infty$ such that
\begin{align}\label{eq:Hyper-u-v}
\sum_{k=0}^{4}\sup_{Z\geq0}\left|\partial_Z^k(U(z)-U_{\infty})e^{\eta_0 Z}\right|<+\infty.
\end{align}
We solve \eqref{eq:Ray-homo} for parameters $(\alpha,c)\in \mathbb{H}_1$, where
\begin{align}\label{def-H1}
	\mathbb{H}_1=\{(\alpha,c)\in \mathbb{R}_{+}\times\mathbb{C}\mid~\alpha|\log c_i|\ll1,~|c|\ll 1,~ c_r, c_i>0\}.
\end{align}

%%%%%%%%%%%%%%%%%%%%%%%%%%%%%%%%
\subsection{Inhomogeneous Rayleigh equation}
In this subsection, we establish the solvability of the following inhomogeneous Rayleigh equation:
\begin{align}\label{eq:Ray-force}
	\left\{
	\begin{aligned}
		&(U(Z)-c)(\partial_Z^2-\alpha^2)\varphi (Z)-U''(Z)\varphi (Z)=F(Z),\quad Z>0,\\ &\varphi(\infty)=\partial_Z\varphi(\infty)=0,
	\end{aligned}
	\right.	
\end{align}
where $F$ is a general source term. For any $\theta>0$, we define the weighted norm $$\|f\|_{L^\infty_\theta}=\sup_{Z\geq 0}e^{\theta Z}|f(Z)|.$$ Then the solution space is defined by
$
\Tilde{\mathcal{Y}}_\theta=\big\{f: \mathbb{R}_+\to\mathbb{C},\   \|f\|_{\Tilde{\mathcal{Y}}_\theta}<\infty\big\},
$
where
\begin{align*}
	\|f\|_{\Tilde{\mathcal{Y}}_\theta}=\|f\|_{L_\theta^\infty}+\|\partial_Z f\|_{L_\theta^\infty}+\|(U-c)\partial_Z^2f\|_{L_\theta^\infty}.
\end{align*}
We also define $\mathcal{Y}_\theta=\big\{f: \mathbb{R}_+\to\mathbb{C},\   \|f\|_{\mathcal{Y}_\theta}<\infty\big\},$
where
\begin{align*}
	\|f\|_{\mathcal{Y}_\theta}=	\|f\|_{\Tilde{\mathcal{Y}}_\theta}+
	\|(U-c)^{2}\partial_Z^3 f\|_{L_\theta^\infty}+\|(U-c)^{3}\partial_Z^4 f\|_{L_\theta^\infty}.
\end{align*}

\begin{proposition}\label{prop:ray-non2}
	For any $F\in L_\theta^{\infty}(\mathbb{R}_+)$, there exists a solution $\varphi_{non}\in \Tilde{\mathcal{Y}_\theta}$ to \eqref{eq:Ray-force} satisfying
	\begin{align}\label{eq-Ray-nonhom-tilde-Y}
		\|\varphi_{non}\|_{\Tilde{\mathcal{Y}}_\theta}\leq C|\log c_i|\|F\|_{L_\theta^\infty}.
	\end{align}
	In addition, if $\partial_Z^j F\in L_\theta^{\infty}(\mathbb{R}_+)$ for $j=1,2$, then $\varphi_{non}\in\mathcal{Y}_\theta$, and it holds that
	\begin{align}\label{eq-Ray-nonhom-Y}
		\|\varphi_{non}\|_{\mathcal{Y}_\theta}\leq C\left(|\log c_i|\|F\|_{L_\theta^\infty}+\|(U-c)\partial_ZF\|_{L_\theta^\infty}
		+\|(U-c)^2\partial_Z^2F\|_{L_\theta^\infty}\right).
	\end{align}
\end{proposition}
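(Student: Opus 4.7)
My plan is to recast the Rayleigh equation into Sturm--Liouville form via the substitution $\psi=\varphi/(U-c)$, derive an explicit integral representation for a particular solution, and then control all of the norms by carefully analyzing the critical layer where $U\approx c$. The crucial algebraic identity is
\[ (U-c)\partial_Z^2\varphi-U''\varphi=\partial_Z\!\left[(U-c)^2\partial_Z\!\left(\tfrac{\varphi}{U-c}\right)\right], \]
which rewrites \eqref{eq:Ray-force} as $\partial_Z[(U-c)^2\partial_Z\psi]-\alpha^2(U-c)^2\psi=F$, with the natural decay $\psi,\,(U-c)^2\partial_Z\psi\to 0$ at infinity. Integrating this twice from $Z$ to $\infty$ and applying Fubini produces the fixed-point equation $\psi=-\mathcal{T}_\alpha\psi-\mathcal{G}[F]$, where
\[ \mathcal{G}[F](Z)=\int_Z^\infty F(Z')\,K(Z,Z')\,dZ',\qquad K(Z,Z')=\int_Z^{Z'}\!(U(Z_1)-c)^{-2}\,dZ_1, \]
and $\mathcal{T}_\alpha$ has the same structure with an extra factor $\alpha^2(U-c)^2$. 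In the regime \eqref{def-H1} the operator $\mathcal{T}_\alpha$ is a contraction, so $\psi$ is produced by a Neumann series, and the particular solution is $\varphi_{non}=(U-c)\psi$.

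The core of the proof is the kernel estimate for $(U-c)K$. Integration by parts via $(U(Z_1)-c)^{-2}=-U'(Z_1)^{-1}\partial_{Z_1}(U(Z_1)-c)^{-1}$ reduces the order of singularity, yielding
\[ K(Z,Z')=\tfrac{1}{U'(Z)(U(Z)-c)}-\tfrac{1}{U'(Z')(U(Z')-c)}-\int_Z^{Z'}\tfrac{U''(Z_1)}{U'(Z_1)^2(U(Z_1)-c)}\,dZ_1. \]
Multiplying by $(U(Z)-c)$ and combining this with the critical-layer bound $\int_0^\infty e^{-\theta Z'}|U(Z')-c|^{-1}dZ'\leq C|\log c_i|$ (obtained by splitting at $Z_c$ where $U(Z_c)=c_r$, using $|U-c|^2\sim U'(Z_c)^2(Z'-Z_c)^2+c_i^2$ on a neighborhood of $Z_c$, and using \eqref{eq:Hyper-U}--\eqref{eq:Hyper-u-v} away from it) as well as the exponential decay of $F$, produces the $\tilde{\mathcal{Y}}_\theta$-estimate \eqref{eq-Ray-nonhom-tilde-Y} for $\varphi_{non}$ and $\partial_Z\varphi_{non}$. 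The bound on $\|(U-c)\partial_Z^2\varphi_{non}\|_{L^\infty_\theta}$ is then read off the equation $(U-c)\partial_Z^2\varphi_{non}=F+U''\varphi_{non}+\alpha^2(U-c)\varphi_{non}$.

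For the stronger $\mathcal{Y}_\theta$-estimate \eqref{eq-Ray-nonhom-Y}, I would differentiate \eqref{eq:Ray-force} once and twice in $Z$, multiplying by $(U-c)$ after each differentiation to absorb the new factor of $(U-c)^{-1}$ created by solving for the highest derivative; this is precisely why the norms $\|(U-c)\partial_Z F\|_{L^\infty_\theta}$ and $\|(U-c)^2\partial_Z^2 F\|_{L^\infty_\theta}$ appear on the right-hand side. The main obstacle throughout is the critical-layer analysis: the naive pointwise bound $|K(Z,Z')|\lesssim c_i^{-1}$ is far too weak, but the combined use of the Sturm--Liouville prefactor $(U-c)$ in $\varphi_{non}=(U-c)\psi$ and the integration by parts above conspire to replace the $c_i^{-1}$ loss by the much milder $|\log c_i|$. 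This in turn is exactly what the regime assumption $\alpha|\log c_i|\ll 1$ in \eqref{def-H1} is designed to accommodate, since it guarantees both convergence of the Neumann series and compatibility of the resulting estimates with the downstream construction of the unstable mode.
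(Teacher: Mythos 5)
Your proposal takes essentially the same route as the paper: both recast the equation in Sturm--Liouville form, invert to an integral/fixed-point equation for $\varphi/(U-c)$, extract the $|\log c_i|$ gain by integrating by parts against $(U-c)^{-2}$ in the critical layer, and close the Neumann series using $\alpha^2|\log c_i|\ll 1$, then get the higher-order norms from the equation itself. The only presentational difference is that you Fubini-swap to the one-variable kernel $K(Z,Z')$ and integrate $K$ by parts directly, whereas the paper keeps the iterated integral and integrates the inner antiderivatives $G_i$ against $(U-c)^{-2}$; do note that your identity for $K$ divides by $U'$, so it is only valid on the compact interval where \eqref{eq:Hyper-U} guarantees $U'\geq M_0$, which means the split at $Z_0$ must precede the integration by parts, exactly as the paper does.
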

\begin{proof} Note that 
	$$\mathrm{Ray}[\varphi]=\partial_Z\left(\big(U-c\big)^2\partial_Z\left(\frac{\varphi}{U-c}\right)\right)-\alpha^2\big(U-c\big)\varphi.
	$$
	We then
	reformulate the Rayleigh equation into the following integral form:
	\begin{align}\label{bc1.1}
		\varphi(Z)=&\alpha^2\big(U(Z)-c\big)\int_Z^\infty \frac{1}{\big(U(Z')-c\big)^2} dZ'\int_{Z'}^\infty \big(U(Z'')-c\big)\varphi(Z'')dZ''\nonumber\\
		&+\big(U(Z)-c\big)\int_{Z}^\infty \frac{1}{\big(U(Z')-c\big)^2}\int_{Z'}^\infty F(Z'') dZ'':=H_1+H_2.
	\end{align}
Let $Z_0$ be the number given in \eqref{eq:Hyper-U}. We split the analysis into the following two cases:
	
	Case 1: $Z\geq Z_0$. Since $|c|\ll1$, we have
	$$|U-c|\geq C^{-1}>0 \text{ for } Z\geq Z_0.$$ Thus,
	 we deduce that
	\begin{align}
		|H_1|\leq C\alpha^2e^{-\theta Z}\|\varphi\|_{L^\infty_\theta},~~|H_2|\leq Ce^{-\theta Z}\|F\|_{L^\infty_{\theta}}.\nonumber
	\end{align}

	Case 2: $Z\leq Z_0$. We split $H_1$ into two parts:
	$$
	\begin{aligned}
		H_1=&\alpha^2\big(U(Z)-c\big)\int_{Z_0}^\infty \frac{1}{\big(U(Z')-c\big)^2} \int_{Z'}^\infty \big(U(Z'')-c\big)\varphi(Z'')dZ''dZ'\\
		&+\alpha^2\big(U(Z)-c\big)\int_Z^{Z_0}\frac{1}{\big(U(Z')-c\big)^2} \int_{Z'}^\infty \big(U(Z'')-c\big)\varphi(Z'')dZ''dZ'\\
		:=&H_{11}+H_{12}.
	\end{aligned}
	$$
	Note that $|H_{11}|\leq C\alpha^2 \|\varphi\|_{L^\infty_{\theta}}$. For $H_{12}$, we denote $G_1(Z)\triangleq\int_{Z}^\infty\big(U(Z')-c\big)\varphi(Z')d Z'.$ Let $0<Z_c\ll Z_0$ be the point such that $U(Z_c)=c_r.$ Then,
	integration by parts yields
	\begin{align}
		\int_{Z}^{Z_0}\frac{G_1(Z')}{\big(U(Z')-c\big)^2}d Z'&=\frac{1}{U'(Z_c)}\left(\int_Z^{Z_0}\frac{G_1(Z')U'(Z')}{\big(U(Z')-c\big)^2} dZ'+\int_{Z}^{Z_0}\frac{G_1(Z')(U'(Z_c)-U'(Z'))}{\big(U(Z')-c\big)^2}\right)d Z'\nonumber\\
		&=\frac{G_1(Z)}{U'(Z_c)\big(U(Z)-c\big)}+O(1)\|G_1\|_{W^{1,\infty}}\left(1+\int_Z^{Z_0}\frac{1}{|U(Z')-c|}d Z'\right)\nonumber\\
		&=\frac{G_1(Z)}{U'(Z_c)\big(U(Z)-c\big)}+O(1)|\log c_i|\|G_1\|_{W^{1,\infty}},\label{g}
	\end{align}
which implies
	\begin{align}
		|H_{12}|\leq C \alpha^2 |\log c_i| \|G_1\|_{W^{1,\infty}}\leq C\alpha^2|\log c_i|\|\varphi\|_{L^\infty_{\theta}}.\nonumber
	\end{align}
Hence, we have $$|H_1|\leq C\alpha^2|\log c_i|\|\varphi\|_{L^\infty_{\theta}}.$$ Similar to $H_1$, we denote $G_2(Z)\triangleq\int_Z^\infty F(Z')d Z'$. Then, it holds that 
	$$|H_2|\leq C|\log c_i|\|G_2\|_{W^{1,\infty}}\leq C|\log c_i|\|F\|_{L^\infty_{\theta}}.
	$$
	Substituting bounds on $H_1$ and $H_1$ into \eqref{bc1.1} yields 
	\begin{align}
		\|\varphi\|_{L^\infty_\theta}\leq C\alpha^2|\log c_i|\|\varphi\|_{L^\infty_{\theta}}+C|\log c_i|\|F\|_{L^\infty_{\theta}}.\nonumber
	\end{align}
Note that $\alpha^2|\log c_i|\ll 1$ due to \eqref{def-H1}. Therefore, the $L^\infty_{\theta}$ solution $\varphi$ to \eqref{bc1.1} can be constructed by a fixed point argument. Moreover, it satisfies 
	\begin{align}
		\|\varphi\|_{L^\infty_\theta}\leq C|\log c_i|\|F\|_{L^\infty_{\theta}}.\label{bc1.2}
	\end{align}
	
	Next we estimate $\partial_Z\varphi$. Taking derivative in \eqref{bc1.1} yields
	\begin{align}
		\partial_Z\varphi(Z)=&
		\alpha^2\left(U'(Z)\int_Z^\infty \frac{G_1(Z')}{\big(U(Z')-c\big)^2} dZ'-\frac{G_1(Z)}{U(Z)-c}\right)\nonumber\\
		&+U'(Z)\int_{Z}^\infty \frac{G_2(Z)}{\big(U(Z')-c\big)^2}dZ'-\frac{G_2(Z)}{U(Z)-c}.\nonumber
	\end{align}
	For $Z\geq Z_0$, we use $|U(Z)-c|\geq C^{-1}$ to obtain
	$$|\partial_Z\varphi(Z)|\leq C\alpha^2e^{-\theta Z}\|\varphi\|_{L^\infty_{\theta}}+Ce^{-\theta Z}\|F\|_{L^\infty_{\theta}}.
	$$ 
	For $Z\leq Z_0$, we split the integral domain into $(Z,Z_0)$ and $(Z_0,\infty)$ as before. In the interval $(Z_0,\infty)$, it holds 
	$$\left|U'(Z)\int_{Z_0}^\infty \frac{G_i(Z')}{\big(U(Z')-c\big)^2} dZ'\right|\leq C\|G_i\|_{L^\infty_{\theta}},~i=1,2.
	$$
On the other hand, thanks to \eqref{g}, it holds that
	$$\begin{aligned}
		\left| U'(Z) \int_{Z}^{Z_0} \frac{G_i(Z')}{\big(U(Z')-c\big)^2} dZ' -\frac{G_i(Z)}{U(Z)-c}\right|&\leq C\left|\frac{U'(Z)}{U'(Z_c)}-1\right|\left|\frac{G_i(Z)}{U(Z)-c}\right|+C|\log c_i|\|G_i\|_{W^{1,\infty}}\\
		&\leq C|\log c_i|\|G_i\|_{W^{1,\infty}},~i=1,2.
	\end{aligned}
	$$
Consequently, we obtain
	$$|\partial_Z\varphi(Z)|\leq C\alpha^2|\log c_i|\|\varphi\|_{L^\infty_{\theta}}+C|\log c_i|\|F\|_{L^\infty_{\theta}}.
	$$
	Combining these two cases yields
	\begin{align}
		\|\partial_Z\varphi\|_{L^\infty_\theta}\leq C\alpha^2|\log c_i|\|\varphi\|_{L^\infty_{\theta}}+C|\log c_i|\|F\|_{L^\infty_{\theta}}\leq C|\log c_i|\|F\|_{L^\infty_{\theta}},\nonumber
	\end{align}
where we have used \eqref{bc1.2} in the last step. Estimates for higher-order derivatives can be obtained by using equation \eqref{eq:Ray-force}. The proof of Proposition \ref{prop:ray-non2} is complete.
\end{proof}

\subsection{Approximate homogeneous Rayleigh solution}
We introduce 
\begin{align}\label{eq:ray-main-def}
	\varphi_{Ray}^{(0)}(Z)\triangleq2\alpha e^{\alpha Z}\big(U(Z)-c\big)\int_Z^{+\infty}\frac{e^{-2\alpha Z'}}{(U(Z')-c)^2}dZ'.
\end{align}
It is straightforward to check 
\begin{align}\label{eq:airy-0}
	\mathrm{Ray}[\varphi_{Ray}^{(0)}]=2\alpha U'\varphi_{Ray}^{(0)}.
\end{align}
Therefore, $\varphi_{Ray}^{(0)}$ is an approximate solution to the homogeneous Rayleigh equation \eqref{eq:Ray-homo} at leading order. 
\begin{lemma}\label{lem:ray-main}
	Let $(\alpha, c)\in \mathbb{H}_1$. Then $\varphi_{Ray}^{(0)}$  satisfies the following pointwise estimate:
	\begin{itemize}
		\item[(a)] For $Z\geq Z_0$, it holds that
		\begin{align}
			\left|\varphi_{Ray}^{(0)}(Z)-\frac{e^{-\alpha Z}\big(U(Z)-c\big)}{(U_\infty-c)^2}\right|+\left|\partial_Z\varphi_{Ray}^{(0)}(Z)-\partial_Z\left(\frac{e^{-\alpha Z}\big(U(Z)-c\big)}{(U_\infty-c)^2}\right) \right|\leq C\alpha e^{-\eta_0 Z},\label{p1}
		\end{align}
	where $\eta_0$ is given in \eqref{eq:Hyper-u-v}.
		\item[(b)] For $0\leq Z\leq Z_0$, it holds that
		\begin{align}
			\left|\varphi_{Ray}^{(0)}(Z)-\frac{e^{-\alpha Z}\big(U(Z)-c\big)}{(U_\infty-c)^2}-\frac{2\alpha e^{-\alpha Z}}{U'(Z_c)} \right|&\leq C\alpha|U(Z)-c||\log c_i|,\label{p2}\\
			\left|\partial_Z\varphi_{Ray}^{(0)}(Z)-\frac{e^{-\alpha Z}U'(Z)}{(U_\infty-c)^2} \right|&\leq C\alpha|\log c_i|.\label{p3}
		\end{align}
	\end{itemize} 
	
	In particular, the boundary values of $\varphi_{Ray}^{(0)}(Z)$ have the following asymptotic expansions:
	\begin{align}
		\varphi_{Ray}^{(0)}(0)&=-\frac{c}{U_\infty^2}+\frac{2\alpha }{U'(Z_c)} +\mathcal{O}(1)(\alpha+|c|)|c||\log c_i|,\label{bc1}\\
		\partial_Z\varphi_{Ray}^{(0)}(0)&=\frac{U'(0)}{U_\infty^2}+\mathcal{O}(1)(\alpha+|c|)|\log c_i|.\label{bc2}
	\end{align}
	
	Moreover, %we have 
	\begin{align}\label{eq-lem-ray-sub-profile}
		\big\|\varphi_{Ray}^{(0)}-(U_\infty-c)^{-2}(U-c)e^{-\alpha Z}\big\|_{\mathcal{Y}_{\eta_0}}\leq C\alpha|\log c_i|^2.
	\end{align}
\end{lemma}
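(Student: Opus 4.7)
The plan is to exploit the explicit integral formula \eqref{eq:ray-main-def} and split the analysis according to whether $Z$ lies in the far field $[Z_0,\infty)$ or in the critical-layer-affected region $[0,Z_0]$. The guiding idea is to peel off the outer contribution obtained by replacing $(U-c)^{-2}$ in the integrand by the constant $(U_\infty-c)^{-2}$, which produces exactly the profile $(U_\infty-c)^{-2}(U-c)e^{-\alpha Z}$, and then to control the remainder
\begin{equation*}
R(Z)=\int_Z^\infty e^{-2\alpha Z'}\left(\frac{1}{(U(Z')-c)^2}-\frac{1}{(U_\infty-c)^2}\right)dZ'.
\end{equation*}

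For the far-field case $Z\geq Z_0$, the conditions \eqref{eq:Hyper-U} and \eqref{eq:Hyper-u-v} give $|U-c|\geq C^{-1}$ and $|U-U_\infty|\leq Ce^{-\eta_0 Z}$, so the integrand of $R(Z)$ is pointwise dominated by $Ce^{-(2\alpha+\eta_0)Z'}$ and therefore $|R(Z)|\leq Ce^{-(2\alpha+\eta_0)Z}/\eta_0$. Multiplying by the prefactor $2\alpha e^{\alpha Z}(U-c)$ yields the $O(\alpha e^{-\eta_0 Z})$ bound of \eqref{p1}; the corresponding derivative estimate follows after differentiating the integral representation and re-using the same splitting.

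For the case $0\leq Z\leq Z_0$, I would split $\int_Z^\infty=\int_Z^{Z_0}+\int_{Z_0}^\infty$; the outer piece is treated as above, while the inner piece requires the integration-by-parts identity used in \eqref{g}. Writing $U'(Z_c)=U'(Z')+(U'(Z_c)-U'(Z'))$ in the numerator and integrating $e^{-2\alpha Z'}U'(Z')/(U-c)^2$ produces the distinguished boundary term $\frac{e^{-2\alpha Z}}{U'(Z_c)(U(Z)-c)}$, together with two residual integrals: one of the form $2\alpha\int e^{-2\alpha Z'}/(U-c)\,dZ'$, and one whose numerator $U'(Z_c)-U'(Z')$ vanishes at $Z_c$. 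Both are controlled by $C|\log c_i|$ via the standard bound $\int_0^{Z_0}|U-c|^{-1}dZ'\leq C|\log c_i|$, which uses $U'(Z_c)>0$ from \eqref{eq:Hyper-U}. After multiplying by $2\alpha e^{\alpha Z}(U-c)$, the distinguished boundary term becomes precisely the critical-layer correction $\frac{2\alpha e^{-\alpha Z}}{U'(Z_c)}$ in \eqref{p2}. The derivative estimate \eqref{p3} then follows from the algebraic identity
\begin{equation*}
\partial_Z\varphi_{Ray}^{(0)}=\Bigl(\alpha+\frac{U'}{U-c}\Bigr)\varphi_{Ray}^{(0)}-\frac{2\alpha e^{-\alpha Z}}{U-c},
\end{equation*}
in which the apparent singularity $U'/(U-c)$ acting on the $\frac{2\alpha e^{-\alpha Z}}{U'(Z_c)}$ component of $\varphi_{Ray}^{(0)}$ is cancelled by the last term up to the bounded factor $(U'(Z)-U'(Z_c))/(U-c)$.

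The boundary-value expansions \eqref{bc1} and \eqref{bc2} are then obtained by setting $Z=0$ (so that $U(0)-c=-c$) and Taylor-expanding $(U_\infty-c)^{-2}=U_\infty^{-2}+O(|c|)$ and $U'(Z_c)=U'(0)+O(|c|)$. For the weighted bound \eqref{eq-lem-ray-sub-profile}, set $\psi\triangleq\varphi_{Ray}^{(0)}-(U_\infty-c)^{-2}(U-c)e^{-\alpha Z}$; a direct computation combined with \eqref{eq:airy-0} gives $\mathrm{Ray}[(U-c)e^{-\alpha Z}]=-2\alpha U'(U-c)e^{-\alpha Z}$, whence
\begin{equation*}
\mathrm{Ray}[\psi]=2\alpha U'\Bigl(\varphi_{Ray}^{(0)}+(U_\infty-c)^{-2}(U-c)e^{-\alpha Z}\Bigr),
\end{equation*}
a source whose $L^\infty_{\eta_0}$ norm together with its weighted first two derivatives is $O(\alpha)$ by parts (a)–(b) and the defining Rayleigh identity \eqref{eq:airy-0}; Proposition \ref{prop:ray-non2} then converts this into a bound dominated by $C\alpha|\log c_i|^2$. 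The main obstacle is the critical-layer analysis on $[0,Z_0]$: the correction $\frac{2\alpha e^{-\alpha Z}}{U'(Z_c)}$ must be extracted with the sharp error $O(\alpha|U-c||\log c_i|)$ in \eqref{p2}, since this is precisely the term that supplies the $\alpha/U'(0)$ contribution to $\varphi_{Ray}(0;c)$ used later to construct the inviscid unstable eigenvalue.
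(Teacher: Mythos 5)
Your proposal is correct and follows the same overall architecture as the paper: split the integral at $Z_0$, peel off the outer profile obtained by freezing $(U-c)^{-2}$ at $(U_\infty-c)^{-2}$, and extract the critical-layer correction $\frac{2\alpha e^{-\alpha Z}}{U'(Z_c)}$ from the inner integral by integration by parts, then recover the derivative estimate from the identity $\partial_Z\varphi_{Ray}^{(0)}=(\alpha+\tfrac{U'}{U-c})\varphi_{Ray}^{(0)}-\tfrac{2\alpha e^{-\alpha Z}}{U-c}$, and finally pass to the weighted bound \eqref{eq-lem-ray-sub-profile} via Proposition~\ref{prop:ray-non2} applied to $\mathrm{Ray}[\psi]=\tilde F$.

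The one genuine difference is in how you handle the inner integral on $[0,Z_0]$. You reuse the elementary splitting from \eqref{g}, namely $1 = U'(Z')/U'(Z_c)+\bigl(U'(Z_c)-U'(Z')\bigr)/U'(Z_c)$, integrate $e^{-2\alpha Z'}U'(Z')/(U-c)^2$ by parts to produce the boundary term $e^{-2\alpha Z}/\bigl(U'(Z_c)(U(Z)-c)\bigr)$, and bound the two residual integrals by $C|\log c_i|$; after multiplying by $2\alpha e^{\alpha Z}(U(Z)-c)$ this gives exactly \eqref{p2} with the $O(\alpha|U-c||\log c_i|)$ error. The paper instead introduces the finer decomposition \eqref{d}--\eqref{def-g}, with the parameter $\gamma=1-\mathrm{i}c_iU''(Z_c)/U'(Z_c)^2$ and the cubic remainder $g(Z)$, which generates the five terms $I_1,\dots,I_5$ carrying explicit $\log(U-c)$ structure. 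For Lemma~\ref{lem:ray-main} your coarser splitting suffices and is cleaner; the paper's heavier decomposition is apparently deployed here because it is re-used verbatim in Lemma~\ref{lem:ray-main-asy} to separate the $\log|c|$ and $\arg(-c)$ contributions to $\mathrm{Re}/\mathrm{Im}\,\varphi_{Ray}^{(0)}(0)$, which your splitting does not expose. So: correct, same skeleton, a lighter-weight implementation of part~(b) that is sufficient for this statement but would have to be replaced by the $\gamma$--$g$ machinery before tackling the second-order asymptotics.

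One small inaccuracy worth flagging: in deriving \eqref{bc1} you propose Taylor-expanding $U'(Z_c)=U'(0)+O(|c|)$; the stated formula \eqref{bc1} retains $U'(Z_c)$ in the denominator, so that expansion is unnecessary there (though it is required for \eqref{bc2}, where $U'(0)$ appears). It does not affect the correctness of your argument, only the form of the constant you would report.
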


\begin{proof} First, we prove (a). We split the integral in the definition \eqref{eq:ray-main-def} as follows:
	\begin{align}\label{kernel-large}\begin{aligned}
			\int_{Z}^{+\infty}\frac{e^{-2\alpha Z'}dZ'}{(U(Z')-c)^2}&=\frac{e^{-2\alpha Z}}{2\alpha(U_{\infty}-c)^2}+\int_{Z}^{+\infty}e^{-2\alpha Z'}\left(\frac{1}{(U(Z')-c)^2}-\frac{1}{(U_\infty-c)^2}
			\right)dZ'\\
			&=\frac{e^{-2\alpha Z}}{2\alpha(U_{\infty}-c)^2}+\int_Z^\infty\frac{e^{-2\alpha Z'}(U_\infty-U(Z'))(U(Z')+U_\infty-2c)}{(U_\infty-c)^2(U(Z')-c)^2}d Z'\\
			&\triangleq\frac{e^{-2\alpha Z}}{2\alpha(U_{\infty}-c)^2}+A(Z).
	\end{aligned}\end{align}
	Using the fact that $
		 |U(Z)-c|\sim 1$  for $Z\geq Z_0$,	and decay property $U(Z)-U_{\infty}\in L^\infty_{\eta_0}$, we can bound
	\begin{align}
		|A(Z)|\leq  C\|U_\infty-U(Z)\|_{L^\infty_{\eta_0}}\int_{Z}^{+\infty}e^{-(2\alpha+\eta_0)Z'}dZ'
		\leq Ce^{-(2\alpha+\eta_0) Z}.\label{A}
	\end{align}
	Consequently, it follows that 
	\begin{align}\label{phi0-large}
		\begin{aligned}
			\varphi_{Ray}^{(0)}(Z)&=2\alpha e^{\alpha Z}(U(Z)-c)\int_{Z}^{+\infty}\frac{e^{-2\alpha Z'}dZ'}{(U(Z')-c)^2}
			%\\&=\frac{e^{-\alpha Z}(U(Z)-c)}{(U_\infty-c)^2}+2\alpha e^{\alpha Z}(U(Z)-c)A \\&
			=\frac{e^{-\alpha Z}(U(Z)-c)}{(U_\infty-c)^2}+\mathcal{O}(\alpha) e^{-\eta_0 Z}.
	\end{aligned}\end{align}
	
	Next, taking derivative in \eqref{eq:ray-main-def} yields
	\begin{align}\label{phi0z}
		\partial_Z\varphi_{Ray}^{(0)}(Z)= & (\alpha+\frac{U'(Z)}{U(Z)-c})\varphi_{Ray}^{(0)}(Z)-\frac{2\alpha e^{-\alpha Z}}{U(Z)-c}.
	\end{align}
	Substituting \eqref{phi0-large} into \eqref{phi0z}, we deduce that
	\begin{align}\begin{aligned}
			\partial_Z\varphi_{Ray}^{(0)}(Z)= & \left(\alpha+\frac{U'(Z)}{U(Z)-c}\right)\frac{e^{-\alpha Z}(U(Z)-c)}{(U_\infty-c)^2}-\frac{2\alpha e^{-\alpha Z}}{U(Z)-c}+\mathcal{O}(\alpha) e^{-\eta_0 Z} \\
			=& \frac{\partial_Z\big(e^{-\alpha Z}(U(Z)-c)\big)}{(U_\infty-c)^2}+\mathcal{O}(\alpha) e^{-\eta_0 Z}.\nonumber
	\end{aligned}\end{align}
	The proof of \eqref{p1} is complete.\\
	
	Next we estabilish (b). For $0\leq Z\leq Z_0,$ we split $\varphi^{(0)}_{Ray}(Z)$ as
	\begin{align}\label{phi0-small}\begin{aligned}
			\varphi_{Ray}^{(0)}(Z)=2\alpha e^{\alpha Z}(U(Z)-c)\left(\int_{Z_0}^{+\infty}\frac{e^{-2\alpha Z'}dZ'}{(U(Z')-c)^2}+\int_{Z}^{Z_0}\frac{e^{-2\alpha Z'}dZ'}{(U(Z')-c)^2}\right).
	\end{aligned}\end{align}
	We need to further decompose the second integral in the bracket. Recall that $0<Z_c\ll Z_0$ is the point where $U(Z_c)=c_r$. Note that
	\begin{align}
	1&=\frac{1}{U'(Z_c)}\left[U'(Z)\left(1-\frac{U''(Z_c)}{U'(Z_c)^2}(U(Z)-c_r)\right)-g(Z)\right],\nonumber\\
	&=\frac{1}{U'(Z_c)}\left[U'(Z)\left(\gamma-\frac{U''(Z_c)}{U'(Z_c)^2}(U(Z)-c)\right)-g(Z)\right],
	\label{d}
	\end{align}
where
\begin{align}
	\gamma= 1-\frac{ic_iU''(Z_c)}{U'(Z_c)^2},\label{gamma}
\end{align}
and
	\begin{align}\label{def-g}\begin{aligned}
			g(Z)=&U'(Z)-U'(Z_c)-\frac{U''(Z_c)}{U'(Z_c)^2}U'(Z)(U(Z)-c_r).
	\end{aligned}\end{align}
	It is straightforward to check that 
	\begin{align}\label{bound-g}
		&g(Z)\in\mathbb{R},\quad\text{ and } \left\|\frac{g(Z)}{(U(Z)-c_r)^2}\right\|_{L^\infty}\lesssim 1. 
	\end{align}
Substituting \eqref{d} into the integral $\int_{Z}^{Z_0}\frac{e^{-2\alpha Z'}dZ'}{(U(Z')-c)^2}$, then integrating by parts, we obtain 
	\begin{align*}
			&\int_{Z}^{Z_0}\frac{e^{-2\alpha Z'}dZ'}{(U(Z')-c)^2}
			=\int_{Z}^{Z_0}\frac{e^{-2\alpha Z'}}{(U(Z')-c)^2}\frac{g(Z')-U'(Z')\left(\gamma-\frac{U''(Z_c)}{U'(Z_c)^2}(U(Z')-c)\right)}{-U'(Z_c)}dZ'\\
			&\qquad=-\int_{Z}^{Z_0}\frac{e^{-2\alpha Z'}g(Z')dZ'}{U'(Z_c)(U(Z')-c)^2}-\frac{e^{-2\alpha Z_0}}{U'(Z_c)}\left(\frac{\gamma}{U(Z_0)-c}+\frac{U''(Z_c)}{U'(Z_c)^2}\log(U(Z_0)-c)\right) \\
			&\qquad\quad+\frac{e^{-2\alpha Z}}{U'(Z_c)}\left(\frac{\gamma}{U(Z)-c}+\frac{U''(Z_c)}{U'(Z_c)^2}\log(U(Z)-c)\right)\\
			&\qquad\quad-\frac{2\alpha\gamma}{U'(Z_c)}\int_{Z}^{Z_0}\frac{e^{-2\alpha Z'}dZ'}{U(Z')-c}-\frac{2\alpha U''(Z_c)}{U'(Z_c)^3}\int_{Z}^{Z_0}e^{-2\alpha Z'}\log(U(Z')-c)dZ'.
	\end{align*}
	Therefore, we substitute the above equality back into \eqref{phi0-small} to obtain the following decomposition
	\begin{align}\label{phi0-small-decom}\begin{aligned}
			\varphi_{Ray}^{(0)}(Z)=~\sum_{i=1}^{5}I_i(Z),
	\end{aligned}\end{align}
 where
	\begin{align}\begin{aligned}
			I_1(Z)=&2\alpha e^{\alpha Z}(U(Z)-c)\int_{Z_0}^{+\infty}\frac{e^{-2\alpha Z'}dZ'}{(U(Z')-c)^2},\\
			I_{2}(Z) =& -2\alpha(U(Z)-c)\frac{e^{\alpha(Z-2 Z_0)}}{U'(Z_c)}\left(\frac{\gamma}{U(Z_0)-c}+\frac{U''(Z_c)}{U'(Z_c)^2}\log(U(Z_0)-c)\right), \\
			I_{3}(Z) =& \frac{2\alpha e^{-\alpha Z}}{U'(Z_c)}\left(\gamma+\frac{U''(Z_c)}{U'(Z_c)^2}(U(Z)-c)\log(U(Z)-c)\right), \\
			I_{4}(Z) =&   -\frac{2\alpha e^{\alpha Z}}{U'(Z_c)}(U(Z)-c)\int_{Z}^{Z_0}e^{-2\alpha Z'}\frac{g(Z')}{(U(Z')-c)^2}dZ',\\
			I_{5}(Z) =&   -\frac{4\alpha^2 e^{\alpha Z}}{U'(Z_c)}(U(Z)-c)\int_{Z}^{Z_0}e^{-2\alpha Z'}\left(\frac{\gamma}{U(Z')-c}+\frac{ U''(Z_c)}{U'(Z_c)^2}\log(U(Z')-c)\right)dZ'.\nonumber
	\end{aligned}\end{align}
Now we estimate $I_1$ to $I_5$ term by term. For $I_1$, we use \eqref{kernel-large} and \eqref{A} to obtain
	\begin{align}\label{I1}\begin{aligned}
			I_1(Z)&= 
			\frac{e^{-\alpha Z}(U(Z)-c)}{ (U_\infty-c)^2}+2\alpha e^{\alpha Z}(U(Z)-c)\int_{Z_0}^Z \frac{e^{-2\alpha Z'}}{(U_{\infty}-c)^2} dZ'+2\alpha e^{\alpha Z}(U-c)A(Z_0)\\
			&=\frac{e^{-\alpha Z}(U(Z)-c)}{ (U_\infty-c)^2}+\mathcal{O}(1)\alpha|U(Z)-c|.
	\end{aligned}\end{align}
The following bounds for $I_2$ and $I_3$ can be directly obtained:
	\begin{align}\label{I2-3}\begin{aligned}
			I_{2} (Z)=& \mathcal{O}(1)\alpha |U(Z)-c|, \\
			I_{3}(Z)= &~ \frac{2\alpha e^{-\alpha Z}}{U'(Z_c)}+\mathcal{O}(1)\alpha\left(c_i+\big|(U(Z)-c)\log(U(Z)-c)\big|\right)\\
			=&~\frac{2\alpha e^{-\alpha Z}}{U'(Z_c)}+\mathcal{O}(1)\alpha|U(Z)-c||\log c_i|.
	\end{aligned}\end{align}
	By virtue of \eqref{bound-g}, we obtain that
	\begin{align}\label{I4}
		I_4(Z) =& \mathcal{O}(1)\alpha|U(Z)-c|.
	\end{align}
Thanks to
	\begin{align}
		\int_{Z}^{Z_0}\frac{dZ'}{|U(Z')-c|} \lesssim |\log c_i|,\text{ and } 
		\int_{Z}^{Z_0}|\log(U(Z')-c)|\lesssim 1,\nonumber
	\end{align}
we deduce that
	\begin{align}\label{I5}
		I_5(Z)=~&\mathcal{O}(1)\alpha^2 |U(Z)-c|(|\log c_i|+1)=\mathcal{O}(1)\alpha^2 |U(Z)-c||\log c_i|.
		%I_{2,3}= &=\mathcal{O}(1). 
	\end{align}
By substituting \eqref{I1}-\eqref{I5} into \eqref{phi0-small-decom}, we get
	\begin{align}\label{phi0-small-exp}
		\begin{aligned}
			\varphi_{Ray}^{(0)}(Z)
			=~&\frac{e^{-\alpha Z}(U(Z)-c)}{ (U_\infty-c)^2}+\frac{2\alpha e^{-\alpha Z}}{U'(Z_c)}+\mathcal{O}(1)\alpha|U(Z)-c||\log c_i|,
	\end{aligned}\end{align}
	which is precisely \eqref{p2}. Moreover, substituting \eqref{phi0-small-exp} into \eqref{phi0z} yields
	\begin{align}\label{phi0z-small}
		\begin{aligned}
			\partial_Z\varphi_{Ray}^{(0)}(Z)=&  \left(\alpha+\frac{U'(Z)}{U(Z)-c}\right)\left(\frac{e^{-\alpha Z}(U(Z)-c)}{ (U_\infty-c)^2}+\frac{2\alpha e^{-\alpha Z}}{U'(Z_c)}\right)-\frac{2\alpha e^{-\alpha Z}}{U(Z)-c}\\
			&+\mathcal{O}(1)\alpha|\log c_i||U(Z)-c|\left|\alpha+\frac{U'(Z)}{U(Z)-c}\right|\\
			=& \frac{e^{-\alpha Z}U'(Z)}{(U_\infty-c)^2}+\mathcal{O}(1)\alpha|\log c_i|,
		\end{aligned}
	\end{align}
	which is precisely \eqref{p3}.

	By evaluating \eqref{phi0-small-exp} and \eqref{phi0z-small} at $Z=0$ respectively, we obtain
	\begin{align}\label{phi0-0-exp}\begin{aligned}
			\varphi_{Ray}^{(0)}(0) =& -\frac{c}{(U_\infty-c)^2}+\frac{2\alpha}{U'(Z_c)}
			+\mathcal{O}(1)\alpha|c\log c_i|\\
			=&-\frac{c}{U_\infty^2}+\frac{2\alpha}{U'(Z_c)}
			+\mathcal{O}(1)(|c|^2+\alpha|c\log c_i|), 
	\end{aligned}\end{align}
	and \begin{align*}
		\partial_Z\varphi_{Ray}^{(0)}(0)= & \frac{U'(0)}{(U_\infty-c)^2}+\mathcal{O}(1)\alpha|\log c_i|=\frac{U'(0)}{U_\infty^2}+\mathcal{O}(1)(|c|+\alpha|\log c_i|).
	\end{align*}
	The proof of \eqref{bc1} and \eqref{bc2} is complete.
	
	Finally, we introduce 
	\begin{align*}
		\tilde{\varphi}^{(0)}(Z)\triangleq\varphi_{Ray}^{(0)}(Z)-(U_\infty-c)^{-2}\big(U(Z)-c\big)e^{-\alpha Z}.
	\end{align*}
Note that $\tilde{\varphi}^{(0)}$ satisfies
	\begin{align}\label{eq:airy-1}
		\mathrm{Ray}[\tilde{\varphi}^{(0)}]=2\alpha U'\big(\varphi_{Ray}^{(0)}+(U_\infty-c)^{-2}\big(U-c\big)e^{-\alpha Z}\big):=\tilde{F}.
	\end{align}
	Using \eqref{eq:airy-0} and the bounds \eqref{p1}, \eqref{p2} for $\varphi_{Ray}^{(0)}$, we deduce that $\big\|(U-c)\partial_Z^2\varphi_{Ray}^{(0)}\big\|_{L^\infty}\leq C|\log c_i|,$ which implies
	\begin{align*}
		\|\tilde{F}\|_{L_{\eta_0}^\infty}+ \|\partial_Z\tilde{F}\|_{L_{\eta_0}^\infty}
		+\|(U-c)\partial_Z^2\tilde{F}\|_{L_{\eta_0}^\infty}\leq C\alpha |\log c_i|. 
	\end{align*}
	Applying the Proposition \ref{prop:ray-non2} to \eqref{eq:airy-1}, we obtain the bound  \eqref{eq-lem-ray-sub-profile}, and thus proof of Lemma \ref{lem:ray-main} is complete.
\end{proof}

We establish the bound on $\partial_c\varphi^{(0)}_{Ray}(0;c)$ in the following
\begin{lemma}\label{lmphic}
	Let $(\alpha,c)\in \mathbb{H}_1$. Then
	\begin{align}
		\|\partial_c\varphi_{Ray}^{(0)}\|_{L^\infty}\leq C.\label{phic}
	\end{align} 
In particular, $\varphi^{(0)}_{Ray}(0;c)$ is analytic in $c$, and 
\begin{align}
	\partial_c\varphi^{(0)}_{Ray}(0;c)=-\frac{1}{U_{\infty}^2}+O(1)(\alpha|\log c_i|+|c|).\label{dphic}
\end{align}
\end{lemma}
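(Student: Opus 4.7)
My plan is to differentiate the explicit integral formula \eqref{eq:ray-main-def} in $c$ and then handle the resulting singular integral through integration by parts, exploiting a cancellation between two a priori singular pieces. Since $(\alpha,c)\in \mathbb{H}_1$ with $c_i>0$ and $U(Z)\geq 0$ on $[0,\infty)$, the denominator $U(Z')-c$ never vanishes on $[0,\infty)$, the integrand in \eqref{eq:ray-main-def} is holomorphic in $c$, and the integral converges uniformly on compact subsets of the upper half plane. This immediately gives the analyticity of $\varphi_{Ray}^{(0)}(0;c)$ in $c$ and legitimizes differentiation under the integral:
\begin{align*}
\partial_c\varphi_{Ray}^{(0)}(Z;c)=-\frac{\varphi_{Ray}^{(0)}(Z;c)}{U(Z)-c}+4\alpha e^{\alpha Z}(U(Z)-c)\int_Z^{\infty}\frac{e^{-2\alpha Z'}}{(U(Z')-c)^3}dZ'.
\end{align*}

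For $Z\geq Z_0$ the estimate $\|\partial_c\varphi_{Ray}^{(0)}\|_{L^\infty([Z_0,\infty))}\leq C$ is immediate from \eqref{p1} and the trivial bound $|U-c|\geq C^{-1}$. The delicate regime is $Z\leq Z_0$. Here each of the two terms on the right is individually singular at the critical point $Z_c$: the first one contains $\tfrac{2\alpha}{U'(Z_c)(U-c)}$ through the expansion in \eqref{p2}, so boundedness is recovered only through a cancellation. To expose it, I will use the identity
\begin{align*}
\frac{2}{(U(Z')-c)^3}=-\frac{1}{U'(Z')}\partial_{Z'}\frac{1}{(U(Z')-c)^2},
\end{align*}
which is legitimate on $[Z,Z_0]$ because $U'\geq M_0$ by \eqref{eq:Hyper-U}. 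Integration by parts on $[Z,Z_0]$ produces the boundary term $\tfrac{e^{-2\alpha Z}}{U'(Z)(U(Z)-c)^2}$ plus an integrand of the form $h(Z')/(U-c)^2$ with $h$ bounded and Lipschitz; a second integration by parts reduces the singularity order, and the residual integrals are controlled by $\int_Z^{Z_0}|U-c|^{-1}dZ'=O(|\log c_i|)$, exactly as in the treatment of $H_{12}$ in Proposition \ref{prop:ray-non2}. The tail $\int_{Z_0}^{\infty}$ is harmless because $|U-c|$ is bounded below there. Altogether this yields, for $Z\leq Z_0$,
\begin{align*}
4\alpha e^{\alpha Z}(U(Z)-c)\int_Z^{\infty}\frac{e^{-2\alpha Z'}}{(U(Z')-c)^3}dZ'=\frac{2\alpha e^{-\alpha Z}}{U'(Z)(U(Z)-c)}+O(\alpha|\log c_i|).
\end{align*}

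Combined with the expansion $-\varphi_{Ray}^{(0)}/(U-c)=-\tfrac{e^{-\alpha Z}}{(U_\infty-c)^2}-\tfrac{2\alpha e^{-\alpha Z}}{U'(Z_c)(U(Z)-c)}+O(\alpha|\log c_i|)$ obtained by dividing \eqref{p2} by $U-c$, the two singular contributions merge into
\begin{align*}
\frac{2\alpha e^{-\alpha Z}}{U(Z)-c}\left(\frac{1}{U'(Z)}-\frac{1}{U'(Z_c)}\right),
\end{align*}
which is $O(\alpha)$ because $|U'(Z)-U'(Z_c)|\leq C|Z-Z_c|\leq C|U(Z)-c|$ by the mean value theorem together with $U'\geq M_0$. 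This proves \eqref{phic}. Evaluating the resulting expansion at $Z=0$ and expanding $(U_\infty-c)^{-2}=U_\infty^{-2}+O(|c|)$ yields \eqref{dphic}, using in addition $|U'(0)-U'(Z_c)|\leq C|Z_c|\leq C|c|$ to bound the remainder of the cancellation. The main obstacle is the critical-layer cancellation highlighted above; once it is in place the remaining estimates are direct analogues of those already carried out in Proposition \ref{prop:ray-non2} and Lemma \ref{lem:ray-main}.
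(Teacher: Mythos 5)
Your route is correct in spirit and reaches the right conclusions, but it is a genuinely different decomposition from the paper's, and it contains one displayed formula that is off by an $O(1)$ term.

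The paper's proof applies the same identity \eqref{d} used for $\varphi_{Ray}^{(0)}$ to the two singular integrals $I_9$ (from $1/(U-c)^2$) and $I_{10}$ (from $1/(U-c)^3$), obtaining in both cases a singular boundary contribution with the \emph{constant} $U'(Z_c)$ in the denominator, namely $\mp\tfrac{2\alpha\gamma e^{-\alpha Z}}{U'(Z_c)(U(Z)-c)}$, which then cancels \emph{exactly}. You instead integrate by parts directly using $\tfrac{2}{(U-c)^3}=-\tfrac{1}{U'}\partial_Z(U-c)^{-2}$, producing a singular boundary term with $U'(Z)$ (not $U'(Z_c)$) in the denominator; the cancellation against the $-\varphi_{Ray}^{(0)}/(U-c)$ piece is then only approximate, and you close it with the Lipschitz estimate $|U'(Z)-U'(Z_c)|\lesssim |Z-Z_c|\lesssim |U(Z)-c|$ in the critical layer. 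This is a valid and slightly more self-contained alternative: it avoids reusing the $\gamma,g$ decomposition \eqref{d}, at the cost of introducing an extra critical-layer comparison that the paper's choice of decomposition renders unnecessary. Both approaches need the same ingredients (integration by parts to reduce singularity order, the bound $\int_Z^{Z_0}|U-c|^{-1}\lesssim|\log c_i|$, the analyticity from the explicit kernel).

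There is, however, a small but real error in your displayed intermediate claim
\begin{align*}
4\alpha e^{\alpha Z}(U(Z)-c)\int_Z^{\infty}\frac{e^{-2\alpha Z'}}{(U(Z')-c)^3}dZ'=\frac{2\alpha e^{-\alpha Z}}{U'(Z)(U(Z)-c)}+O(\alpha|\log c_i|).
\end{align*}
The tail $\int_{Z_0}^{\infty}$ is \emph{not} negligible at the stated order: since $\int_{Z_0}^\infty e^{-2\alpha Z'}dZ'\sim \alpha^{-1}$, after multiplication by the prefactor $4\alpha e^{\alpha Z}(U(Z)-c)$ the tail contributes the $O(1)$ term $\tfrac{2(U(Z)-c)e^{-\alpha Z}}{(U_\infty-c)^3}+O(\alpha)$, not something of size $O(\alpha|\log c_i|)$. (This is precisely the second summand in the paper's $I_6$.) Your $L^\infty$ bound \eqref{phic} survives because the missing piece is uniformly $O(1)$, and \eqref{dphic} survives because at $Z=0$ that piece equals $-2c(U_\infty-c)^{-3}=O(|c|)$, which is absorbed by the claimed error. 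So the conclusions you draw are correct, but the intermediate display should be corrected to read $\tfrac{2\alpha e^{-\alpha Z}}{U'(Z)(U(Z)-c)}+\tfrac{2(U(Z)-c)e^{-\alpha Z}}{(U_\infty-c)^3}+O(\alpha|\log c_i|)$, with the extra term then carried through to the final expansion.
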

\begin{proof}
	Differentiating \eqref{eq:ray-main-def} with respect to $c$ yields
\begin{equation}
	\begin{aligned}
	\partial_c\varphi^{(0)}_{Ray}(Z)&=-2\alpha e^{\alpha Z}\int_Z^\infty \frac{e^{-2\alpha Z'}}{(U(Z')-c)^2}d Z'+4\alpha e^{\alpha Z}(U(Z)-c)\int_{Z}^\infty \frac{e^{-2\alpha Z'}}{(U(Z')-c)^3} dZ'.\nonumber
\end{aligned}
\end{equation}
For $Z\geq Z_0$, it is straightforward to show that $|\partial_c\varphi^{(0)}_{Ray}(Z)|\leq C.$ Thus, we only need to consider the case $Z\leq Z_0$. Similar to before, we decompose
\begin{align}
		\partial_c\varphi^{(0)}_{Ray}(Z)=I_6+I_7+I_8+I_9+I_{10},\label{phi1c}
\end{align}
where
\begin{equation}\nonumber
\begin{aligned}
	I_6&=\frac{-e^{-\alpha Z}}{(U_\infty-c)^2}+\frac{2(U(Z)-c)e^{-\alpha Z}}{(U_\infty-c)^3},\\
	I_7&=-2\alpha e^{\alpha Z}\int_{Z_0}^\infty \frac{e^{-2\alpha Z'}}{(U(Z')-c)^2} -\frac{e^{-2\alpha Z'}}{(U_\infty-c)^2}dZ'\\
	&\qquad+4\alpha (U(Z)-c)e^{\alpha Z}\int_{Z_0}^\infty \frac{e^{-2\alpha Z'}}{(U(Z')-c)^3}-\frac{e^{-2\alpha Z'}}{(U_\infty-c)^3} dZ',\\
	I_8&=\frac{2\alpha e^{\alpha Z}}{(U_\infty-c)^2}\int_{Z}^{Z_0}e^{-2\alpha Z'}d Z'-\frac{4\alpha(U(Z)-c)e^{\alpha Z}}{(U_\infty-c)^3}\int_{Z}^{Z_0}e^{-2\alpha Z'}d Z',\\
	I_9&=-2\alpha e^{\alpha Z}\int_{Z}^{Z_0}\frac{e^{-2\alpha Z'}}{(U(Z')-c)^2} dZ',\\
	I_{10}&=4\alpha e^{\alpha Z}(U(Z)-c)\int_{Z}^{Z_0}\frac{e^{-2\alpha Z'}}{(U(Z')-c)^3} dZ'.
\end{aligned}	
\end{equation}

Now we estimate $I_6$ to $I_{10}$ term by term. Using the decay property $\|U-U_\infty\|_{L^\infty_{\eta_0}}\leq C$, and $|U(Z')-c|\gtrsim 1$ when $Z'\geq Z_0$, we deduce that
\begin{align}\label{I7}
	|I_7|\leq C\alpha.
\end{align}
Note that $|\int_{Z}^{Z_0}e^{-2\alpha Z'} dZ'|\leq Z_0\lesssim 1$ and $e^{\alpha Z}\leq e^{\alpha Z_0}\lesssim 1$. Thus, we have
\begin{align}\label{I8}
	|I_8|\leq C\alpha.
\end{align}

To estimate $I_9$, we use the decomposition \eqref{d} to get
\begin{align}
	I_9&=\frac{-2\alpha e^{\alpha Z}}{U'(Z_c)}\int_Z^{Z_0}e^{-2\alpha Z'}\left( \frac{\gamma U'(Z')}{(U(Z')-c)^2}-\frac{U''(Z_c)U'(Z')}{U'(Z_c)^2(U(Z')-c)}-\frac{g(Z')}{(U(Z')-c)^2}\right) dZ'\nonumber\\
	&:=I_{91}+I_{92}+I_{93}.\label{I9d}
\end{align}
Integrating by parts yields
\begin{align}\label{I91}
	I_{91}&=\frac{-2\alpha\gamma e^{-\alpha Z}}{U'(Z_c)(U(Z)-c)}+\frac{2\alpha\gamma e^{\alpha Z-2\alpha Z_0}}{U'(Z_c)(U(Z_0)-c)}+\frac{4\alpha^2\gamma e^{\alpha Z}}{U'(Z_c)}\int_Z^{Z_0}\frac{e^{-2\alpha Z'}}{U(Z')-c} dZ'\nonumber\\
	&=\frac{-2\alpha\gamma e^{-\alpha Z}}{U'(Z_c)(U(Z)-c)}+O(\alpha),
\end{align}
where we have used $\int_{Z}^{Z_0}\frac{1}{|U(Z')-c|}dZ'\leq |\log c_i|$ and $\alpha |\log c_i|\ll 1$ in the last step. Similarly,
\begin{align}\label{I92}
	I_{92}=&\frac{-2\alpha U''(Z_c) e^{-\alpha Z}}{U'(Z_c)^3}\log (U(Z)-c)+\frac{2\alpha U''(Z_c) e^{\alpha Z-2\alpha Z_0}}{U'(Z_c)^3}\log (U(Z_0)-c)\nonumber\\
	&-\frac{4\alpha^2U''(Z_c)e^{\alpha Z}}{U'(Z_c)^3}\int_{Z}^{Z_0}e^{-2\alpha Z'}\log(U(Z')-c)d Z'\nonumber\\
	=&O(1)\alpha|\log c_i|.
\end{align}
By using \eqref{bound-g}, we obtain that
\begin{align}
	I_{93}\leq C\alpha.\label{I93}
\end{align}
Substituting \eqref{I91}-\eqref{I93} into \eqref{I9d} yields
\begin{align}
	I_9=\frac{-2\alpha\gamma e^{-\alpha Z}}{U'(Z_c)(U(Z)-c)}+O(1)\alpha|\log c_i|.\label{I9}
\end{align}
Similarly, we can obtain the following bound for $I_{10}$:
\begin{align}
	I_{10}=\frac{2\alpha\gamma e^{-\alpha Z}}{U'(Z_c)(U(Z)-c)}+O(1)\alpha|\log c_i|\label{I10}.
\end{align}

Substituting \eqref{I7}, \eqref{I8}, \eqref{I9} and \eqref{I10} into \eqref{phi1c}, we get
\begin{align}\label{phic1}
	\partial_c\varphi_{Ray}^{(0)}(Z)=\frac{-e^{-\alpha Z}}{(U_\infty-c)^2}+\frac{2(U(Z)-c)e^{-\alpha Z}}{(U_\infty-c)^3}+O(1)\alpha|\log c_i|,
\end{align}
which implies \eqref{phic}. Evaluating \eqref{phic1} at $Z=0$ yields \eqref{dphic}. The analyticity follows from the explicit expression \eqref{eq:ray-main-def}. This completes the proof of Lemma \ref{lmphic}.
\end{proof}

The asymptotic value of $\varphi^{(0)}_{Ray}(0)$ obtained in \eqref{bc1} is not sufficient to capture the inviscid instability. The following lemma establishes a more accurate second-order asymptotic expansion of $\varphi_{Ray}^{(0)}(0)$ and $\partial_Z\varphi_{Ray}^{(0)}(0)$ with respect to the small parameters $\alpha$ and $c$.

\begin{lemma}\label{lem:ray-main-asy}
	Let $(\alpha, c)\in\mathbb{H}_1$. If, in addition, $c_i\ll \min\{\alpha, c_r\}$, then
	\begin{align}
		\mathrm{Re}\left(\varphi_{Ray}^{(0)}(0)\right)= & -\frac{c_r}{U_\infty^2}+ \frac{2\alpha}{U'(Z_c)}\left(1-\frac{U''(Z_c)}{U'(Z_c)^2}c_r\log|c|\right) +\mathcal{O}\big((\alpha+c_r)c_r\big),\label{Re1} \\
		\mathrm{Im}\left(\varphi_{Ray}^{(0)}(0)\right)= &  -\frac{c_i}{U_\infty^2}- \frac{2\alpha U''(Z_c)}{U'(Z_c)^3}\Big(c_r\arg(-c)+c_i\log|c|\Big)
		+o\big((\alpha+c_r)c_r\big),\label{Im1}\\
		\mathrm{Re}\left(\partial_Z\varphi_{Ray}^{(0)}(0)\right)=&\frac{U'(0)}{U_\infty^2}+\frac{2U''(Z_c)}{U'(Z_c)^2}\alpha\log|c|
		+\mathcal{O}(\alpha+c_r),\label{Re2}\\
		\mathrm{Im} \left(\partial_Z\varphi_{Ray}^{(0)}(0)\right)=&\frac{2U''(Z_c)}{U'(Z_c)^2}\alpha\arg(-c)
		+o(\alpha+c_r).\label{Im2}
	\end{align}
\end{lemma}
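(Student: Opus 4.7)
My plan is to sharpen the asymptotics of Lemma \ref{lem:ray-main} by tracking explicitly the logarithmic contributions that were previously absorbed into errors of size $\mathcal{O}(\alpha|c||\log c_i|)$. The starting point is the decomposition $\varphi_{Ray}^{(0)}(0)=\sum_{i=1}^{5}I_i(0)$ already established there, together with the closed form
\[
I_3(0)=\frac{2\alpha}{U'(Z_c)}\left(\gamma+\frac{U''(Z_c)}{U'(Z_c)^{2}}(-c)\log(-c)\right),\qquad \gamma=1-\frac{ic_i U''(Z_c)}{U'(Z_c)^{2}}.
\]
Using the principal branch $\log(-c)=\log|c|+i\arg(-c)$ and expanding $c\log(-c)=(c_r+ic_i)(\log|c|+i\arg(-c))$, I read off the $c_r\log|c|$ contribution for the real part and the $c_r\arg(-c)+c_i\log|c|$ combination for the imaginary part, which are precisely the logarithmic terms appearing in \eqref{Re1}--\eqref{Im1}; the mixed pieces $\alpha c_i\arg(-c)$ and $\alpha c_i$ are $o(\alpha c_r)$ under the hypothesis $c_i\ll c_r$. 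The constant $-c/U_\infty^{2}$ comes from Taylor-expanding $I_1(0)=-c(U_\infty-c)^{-2}e^{-2\alpha Z_0}+\mathcal{O}(\alpha|c|)$. The remaining terms $I_2(0), I_4(0), I_5(0)$ each factor a $-c$ out in front, so their real parts are $\mathcal{O}((\alpha+c_r)c_r)$ by the bounds proved in Lemma \ref{lem:ray-main}; their imaginary parts are $o((\alpha+c_r)c_r)$ because the remaining complex factors inside them, namely $\gamma-1$, $\mathrm{Im}\log(U(Z_0)-c)$, and $\mathrm{Im}(U(Z')-c)^{-2}$, are themselves of size $c_i$.

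For the derivative bounds \eqref{Re2}--\eqref{Im2} I do not redo the integral analysis; instead I evaluate the ODE relation \eqref{phi0z} at $Z=0$ to obtain
\[
\partial_Z\varphi_{Ray}^{(0)}(0)=\alpha\,\varphi_{Ray}^{(0)}(0)-\frac{U'(0)}{c}\varphi_{Ray}^{(0)}(0)+\frac{2\alpha}{c}.
\]
Inserting the refined expansion of $\varphi_{Ray}^{(0)}(0)$ from the previous step, the leading $-c/U_\infty^{2}$ piece produces $U'(0)/U_\infty^{2}$. The potentially singular $2\alpha/c$ cancels with $-U'(0)c^{-1}\cdot 2\alpha/U'(Z_c)$ up to the remainder $2\alpha c^{-1}\bigl(1-U'(0)/U'(Z_c)\bigr)$; using $U'(Z_c)-U'(0)=U''(0)Z_c+\mathcal{O}(Z_c^{2})$, $Z_c=c_r/U'(0)+\mathcal{O}(c_r^{2})$, and $c_r/c=1-ic_i/c_r+\mathcal{O}((c_i/c_r)^{2})$, this remainder is $\mathcal{O}(\alpha+c_r)$ in the real part and $o(\alpha+c_r)$ in the imaginary part. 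Finally, the logarithmic term $-\frac{2\alpha U''(Z_c)}{U'(Z_c)^{3}}c\log(-c)$ of $\varphi_{Ray}^{(0)}(0)$, multiplied by $-U'(0)/c$, yields $\frac{2\alpha U''(Z_c)}{U'(Z_c)^{2}}\log(-c)+\mathcal{O}(\alpha c_r|\log c_i|)$; separating real and imaginary parts and using $\alpha|\log c_i|\ll 1$ recovers exactly the $\alpha\log|c|$ and $\alpha\arg(-c)$ contributions in \eqref{Re2}--\eqref{Im2}. The residual $\alpha\,\varphi_{Ray}^{(0)}(0)=\mathcal{O}(\alpha(\alpha+c_r))$ is absorbed into the error.

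The main obstacle is that the imaginary-part errors must be $o((\alpha+c_r)c_r)$, respectively $o(\alpha+c_r)$, rather than the $\mathcal{O}((\alpha+c_r)c_r)$, respectively $\mathcal{O}(\alpha+c_r)$, accuracy sufficient for the real parts. This forces me to verify that every error term in $I_2, I_4, I_5$ bounded only by $\alpha|c||\log c_i|^{k}$ carries an additional hidden factor of $c_i$ in its imaginary part. This works because those terms have the structure $c$ times an integral of a real-valued integrand deformed by an imaginary shift of size $c_i$, so after separating the real leading integral from the $c_i$-sized imaginary correction, one further factor of $c_i$ appears. The assumption $c_i\ll\min\{\alpha,c_r\}$ is crucial at this step, and also legitimizes the expansion $c_r/c=1-ic_i/c_r+\cdots$ used in the derivative calculation above.
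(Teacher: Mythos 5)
Your proposal follows the paper's own argument essentially line by line: the same decomposition $\varphi_{Ray}^{(0)}(0)=\sum_{i=1}^5 I_i(0)$, the same identification of $I_1(0)$ and $I_3(0)$ as the sole sources of the leading terms, the same extraction of $c_r\log|c|$ and $c_r\arg(-c)+c_i\log|c|$ from the factor $c\log(-c)$ in $I_3(0)$, and the same derivative computation via \eqref{phi0z} at $Z=0$ with the near-cancellation $1-U'(0)/U'(Z_c)=O(c_r)$.

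One small inaccuracy: your uniform justification that the imaginary parts of $I_2,I_4,I_5$ pick up ``one further factor of $c_i$'' is not literally correct for $I_4$ and $I_5$. For $I_4$, $\mathrm{Im}\big(g(Z)/(U(Z)-c)^2\big)$ is only $O\big(c_i/|U(Z)-c|\big)$, and after integration the gain is $c_i|\log c_i|$, not $c_i$; this is still $o(1)$ so your conclusion stands. For $I_5$, the integrand's imaginary part (coming from $\mathrm{Im}\log(U(Z')-c)=\arg(U(Z')-c)$, which jumps from $\approx-\pi$ to $\approx 0$ across $Z_c$, and from $\mathrm{Im}(1/(U(Z')-c))=c_i/|U(Z')-c|^2$) integrates to $O(1)$, so \emph{no} extra $c_i$ appears; $\mathrm{Im}\,I_5(0)=o(\alpha c_r)$ instead because $I_5$ carries an extra $\alpha$ and $\alpha|\log c_i|\ll1$, exactly the bound $|I_5(0)|\lesssim\alpha^2|c\,\log c_i|$ the paper uses. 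Both points are easily patched and do not change the outcome; otherwise your route is the paper's.
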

\begin{proof}
	\underline{\bf Estimates of $\varphi_{Ray}^{(0)}(0)$}. We start from the decomposition \eqref{phi0-small-decom}. Evaluating \eqref{phi0-small-decom} at $Z=0$, we obtain
	\begin{align}\label{phi0-0}
		\varphi_{Ray}^{(0)}(0)=-2\alpha c\int_{0}^{+\infty}\frac{e^{-2\alpha Z}}{(U(Z)-c)^2}dZ=\sum_{i=1}^{5}I_i(0),
	\end{align}
	where
	\begin{align}\begin{aligned}
			I_1(0)=&-2\alpha c\int_{Z_0}^{+\infty}\frac{e^{-2\alpha Z}dZ}{(U(Z)-c)^2},\ \  I_{2}(0) = ~\frac{2\alpha ce^{-2\alpha Z_0}}{U'(Z_c)}\left(\frac{\gamma}{U(Z_0)-c}+\frac{U''(Z_c)}{U'(Z_c)^2}\log(U(Z_0)-c)\right), \\
			I_{3}(0) =& ~\frac{2\alpha}{U'(Z_c)}\left(\gamma-\frac{U''(Z_c)}{U'(Z_c)^2}c\log(-c)\right), \ \
			I_{4}(0) =   ~\frac{2\alpha c}{U'(Z_c)}\int_{0}^{Z_0}\frac{e^{-2\alpha Z}g(Z)}{(U(Z)-c)^2}dZ,\\
			I_{5}(0) =&   ~\frac{4\alpha^2 c}{U'(Z_c)}\int_{0}^{Z_0}e^{-2\alpha Z}\left(\frac{\gamma}{U(Z)-c}+\frac{ U''(Z_c)}{U'(Z_c)^2}\log(U(Z)-c)\right)dZ.\nonumber
	\end{aligned}\end{align}

	We now estimate $I_1(0)$ to $I_5(0)$ term by term. Note that $I_1(0)$ encodes the boundary information received from the far field, while $I_3(0)$ represents the boundary contribution arising from the localized term. It turns out that only these two terms contribute at leading order in the expansions \eqref{Re1} and \eqref{Im1}.
	
	First, we estimate $I_1(0)$. From \eqref{kernel-large} we obtain
	\begin{align*}%\label{kernel-large-exp}\begin{aligned}
			\int_{Z_0}^{+\infty}\frac{e^{-2\alpha Z'}dZ'}{(U(Z')-c)^2}
			&=\frac{e^{-2\alpha Z_0}}{2\alpha(U_\infty-c)^2}+A(Z_0)=\frac{e^{-2\alpha Z_0}}{2\alpha (U_\infty-c_r)^2}%+\frac{e^{-\alpha Z_0}c(2U_\infty-c)}{2\alpha U_\infty^2(U_\infty-c)^2}
			+\mathcal{O}(1)\frac{c_i}{\alpha}+A(Z_0),
			%\end{aligned}
		\end{align*}
		which implies
		\begin{align*}
			I_1(0) =& -\frac{ce^{-2\alpha Z_0}}{(U_\infty-c_r)^2}%-\frac{e^{-\alpha Z_0}c^2(2U_\infty-c)}{ U_\infty^2(U_\infty-c)^2}
			-2\alpha cA(Z_0)+\mathcal{O}(|c|c_i).
		\end{align*}
		Recall $A(Z)$ in \eqref{kernel-large}. A straightforward computation yields
		\begin{align*}%\label{varphi0-1-2}
			|\text{Re}\big(A(Z_0)\big)|\lesssim 1, \text{ and } |\text{Im}\big(A(Z_0)\big)|\lesssim c_i.
		\end{align*}
		Consequently, by virtue of $c_i\ll c_r$ we deduce that
		\begin{align}\label{I1-0-part}\begin{aligned}
				\text{Re}\left( I_1(0)\right) & =-\frac{c_re^{-2\alpha Z_0}}{ (U_\infty-c_r)^2}+\mathcal{O}(1)(\alpha+c_i)|c|=-\frac{c_r}{ U_\infty^2}+\mathcal{O}(1)(\alpha+c_r)c_r,\\%\big(c_r(\alpha+c_r)+(\alpha+c_i)|c|\big),\\
				\text{Im}\left( I_1(0)\right) & =-\frac{c_ie^{-2\alpha Z_0}}{ (U_\infty-c_r)^2}+\mathcal{O}(1)(\alpha+|c|)c_i=-\frac{c_i}{ U_\infty^2}+o(1)(\alpha+c_r)c_r.
		\end{aligned}    \end{align}
		
		Next we consider $I_{2}(0)$. Note that
		\begin{align*}
			\left|\frac{\gamma}{U(Z_0)-c}\right|+\left|\frac{U''(Z_c)}{U'(Z_c)^2}\log(U(Z_0)-c)\right|&=\mathcal{O}(1), \\
			\text{Im}\left(\frac{\gamma}{U(Z_0)-c}+\frac{U''(Z_c)}{U'(Z_c)^2}\log(U(Z_0)-c)\right) &=\mathcal{O}(1)\big(c_i+\arg(U(Z_0)-c)\big)=\mathcal{O}(c_i),
		\end{align*}
		due to $\arg(U(Z_0)-c)\sim -c_i$.  Thus, by taking real and imaginary parts of $I_2(0)$ respectively, we obtain
		\begin{align}\label{I2-0-part}\begin{aligned}
				\text{Re}\big(I_{2}(0)\big)&=\mathcal{O}(1)\alpha |c|=\mathcal{O}(\alpha c_r),\\
				\text{Im}(I_{2}(0))&=\mathcal{O}(1)\alpha c_i\Big(1+ c_r\Big)
				=o(\alpha c_r).
		\end{aligned}\end{align}

	    For $I_3(0),$ by direct calculation and recalling $\gamma$ as defined in \eqref{gamma}, we obtain 
		\begin{align}\label{I3-0-part}\begin{aligned}
				\text{Re}\big(I_{3}(0)\big)= &~ \frac{2\alpha}{U'(Z_c)}\left(1-\frac{U''(Z_c)}{U'(Z_c)^2}\Big(c_r\log|c|-c_i\arg(-c)\Big)\right) \\
				= &~ \frac{2\alpha}{U'(Z_c)}\left(1-\frac{U''(Z_c)}{U'(Z_c)^2}c_r\log|c|\right)
				+\mathcal{O}(\alpha c_i),\\
				\text{Im}\big(I_{3}(0)\big)= & \frac{2\alpha}{U'(Z_c)}\left(-\frac{c_iU''(Z_c)}{U'(Z_c)^2}
				-\frac{U''(Z_c)}{U'(Z_c)^2}\big(c_r\arg(-c)+c_i\log|c|\big)\right) \\
				= & -\frac{2U''(Z_c)}{U'(Z_c)^3}\alpha\big(c_r\arg(-c)+c_i\log|c|\big)
				+\mathcal{O}(\alpha c_i).
		\end{aligned}\end{align}

		For $I_4(0)$, we recall the definition of $g(Z)$ in  \eqref{def-g}. Then, it holds that
		\begin{align}\label{g-part}
			\text{Re}\left(\frac{g(Z)}{(U(Z)-c)^2}\right)=\mathcal{O}(1),\quad  \text{Im}\left(\frac{g(Z)}{(U(Z)-c)^2}\right)%=\frac{2c_i\big(U(Z)-c_r\big)g(Z)}{|U(Z)-c|^4}
			=\mathcal{O}(1)\frac{c_i}{|U(Z)-c|},
		\end{align}
		which implies
		\begin{align*}
			&\text{Re}\left(\int_{0}^{Z_0}\frac{e^{-2\alpha Z'}g(Z)}{(U(Z)-c)^2}dZ'\right)  =\mathcal{O}(1),\\
			&\text{Im}\left(\int_{0}^{Z_0}\frac{e^{-2\alpha Z'}g(Z)}{(U(Z)-c)^2}dZ'\right)  =\mathcal{O}(1)c_i\int_{0}^{Z_0}\frac{dZ'}{|U(Z)-c|}=\mathcal{O}(1)c_i|\log c_i|.
		\end{align*}
		Substituting these bounds into $I_4(0)$, we deduce that
		\begin{align}\label{I4-0-part}
			\text{Re}\big(I_4(0)\big)  =\mathcal{O}(1)\alpha |c|,\quad
			\text{Im}\big(I_4(0)\big) =\mathcal{O}(1)\alpha\big(c_r c_i|\log c_i|+c_i\big)
			=o(\alpha c_r).
		\end{align}
		
		Finally, we can bound $I_5(0)$ as
		\begin{align}\label{I5-0-part}
			|I_5(0)|\leq C\alpha^2 |c\log c_i|=o(\alpha c_r),
		\end{align}
		where we have used $|\alpha \log c_i|\ll 1$. By substituting \eqref{I1-0-part}, \eqref{I2-0-part}, \eqref{I3-0-part}, \eqref{I4-0-part} and \eqref{I5-0-part} into \eqref{phi0-0}, and then taking the real and imaginary parts respectively, we obtain
		\begin{align}\begin{aligned}
				\text{Re}\left(\varphi_{Ray}^{(0)}(0)\right)= & -\frac{c_r}{U_\infty^2}+ \frac{2\alpha}{U'(Z_c)}\left(1-\frac{U''(Z_c)}{U'(Z_c)^2}c_r\log|c|\right) +\mathcal{O}\big((\alpha+c_r)c_r\big), \\
				\text{Im}\left(\varphi_{Ray}^{(0)}(0)\right)= &  -\frac{c_i}{U_\infty^2}- \frac{2\alpha U''(Z_c)}{U'(Z_c)^3}\Big(c_r\arg(-c)+c_i\log|c|\Big)
				%+\mathcal{O}(\alpha+|c|)c_i
				+o\big((\alpha+c_r)c_r\big).\nonumber
		\end{aligned}\end{align}
		Thus, the proof of \eqref{Re1} and \eqref{Im1} is complete.\\

		\underline{\bf{ Estimates of $\partial_Z\varphi_{Ray}^{(0)}(0)$}}. Evaluating \eqref{phi0z} at $Z=0$, we obtain
		\begin{align}%\label{varphi0_z}\begin{aligned}
				\partial_Z\varphi_{Ray}^{(0)}(0)
				= 2\alpha c^{-1}+\left(\alpha-U'(0) c^{-1}\right)\varphi_{Ray}^{(0)}(0).\label{Z1}
			\end{align}
			Substituting \eqref{Re1} and \eqref{Im1} into \eqref{Z1} yields
			\begin{align*}%\label{phi0-z-impart}\begin{aligned}
					\text{Re}(\partial_Z\varphi_{Ray}^{(0)}(0)) =~& \frac{2\alpha c_r}{|c|^2}
					+\left(\alpha-\frac{U'(0)c_r}{|c|^2}\right)\text{Re}\big(\varphi_{Ray}^{(0)}(0)\big)
					-\frac{U'(0)c_i}{|c|^2}\text{Im}\big(\varphi_{Ray}^{(0)}(0)\big)\\
					=&~\frac{2\alpha c_r}{|c|^2}\left(1-\frac{ U'(0)}{U'(Z_c)}\right)
					+%\left(\frac{U'(0)c_r}{|c|^2}-\alpha\right)
					\frac{U'(0)c_r}{|c|^2}\left(\frac{c_r}{U_\infty^2}
					+\frac{2U''(Z_c)}{U'(Z_c)^3}\alpha c_r\log|c|\right)\\
					&+\frac{U'(0)c_i}{|c|^2}\left(\frac{c_i}{U_\infty^2}
					+\frac{2U''(Z_c)}{U'(Z_c)^3}\alpha \big(c_r\arg(-c)+c_i\log|c|\big)\right)+\mathcal{O}(\alpha+c_r)\\
					=&~\frac{U'(0)}{U_\infty^2}+\frac{2U''(Z_c)}{U'(Z_c)^2}\alpha\log|c|
					+\mathcal{O}(\alpha+c_r),
					%\end{aligned}
				\end{align*}
				and
				\begin{align*}%\label{phi0-z-impart}\begin{aligned}
						\text{Im}(\partial_Z\varphi_{Ray}^{(0)}(0)) =& -\frac{2\alpha c_i}{|c|^2}
						+\frac{U'(0)c_i}{|c|^2}\text{Re}\big(\varphi_{Ray}^{(0)}(0)\big)
						+\left(\alpha-\frac{U'(0)c_r}{|c|^2}\right)\text{Im}\big(\varphi_{Ray}^{(0)}(0)\big)\\
						=&~\frac{2\alpha c_i}{|c|^2}\left(-1+\frac{ U'(0)}{U'(Z_c)}\right)
						-\frac{U'(0)c_i}{|c|^2}\left(\frac{c_r}{U_\infty^2}
						+\frac{2U''(Z_c)}{U'(Z_c)^3}\alpha c_r\log|c|\right)\\
						&+%\left(\frac{U'(0)c_r}{|c|^2}-\alpha\right)
						\frac{U'(0)c_r}{|c|^2}\left(\frac{c_i}{U_\infty^2}
						+\frac{2U''(Z_c)}{U'(Z_c)^3}\alpha \big(c_r\arg(-c)+c_i\log|c|\big)\right)+o(\alpha+c_r)\\
						=&~\frac{2U''(Z_c)}{U'(Z_c)^2}\alpha\arg(-c)
						+o(\alpha+c_r),
						%\end{aligned}
					\end{align*}
					where we have used the facts $c_i\ll\min\{\alpha, c_r\}$ and 
					\begin{align}
						U'(Z_c)-U'(0)\sim Z_c\sim c_r.\nonumber
					\end{align}
					 Thus, we achieve \eqref{Re2} and \eqref{Im2}, and  complete the proof of Lemma \ref{lem:ray-main-asy}.
					
				\end{proof}
				
				\subsection{Remainder estimate}

				In this part, we construct the solution $\varphi_{Ray}$ to the homogeneous Rayleigh equation \eqref{eq:Ray-homo} around the approximation $\varphi_{Ray}^{(0)}$. Recall that
				\begin{align*}
					\mathrm{Ray}[\varphi_{Ray}^{(0)}]=2\alpha U'\varphi_{Ray}^{(0)}.
				\end{align*}
		We then solve for $\varphi_{Ray}$ in the form 
				\begin{align*}
					\varphi_{Ray}(Z)=\varphi_{Ray}^{(0)}(Z)+\varphi_R(Z),\ \ \text{ where } \ Ray[\varphi_R]=-2\alpha U'\varphi_{Ray}^{(0)}.
				\end{align*}
			We furthre decompose
				\begin{align}\label{def-phi-R}
					\varphi_R=\varphi_1+\varphi_2.
				\end{align} 
			where
				\begin{align}\label{def-phi1}
					\varphi_1(Z)\triangleq-2\alpha(U(Z)-c)\int_Z^{+\infty} \frac{\int_{Z'}^{+\infty}U'(Z'')\varphi_{Ray}^{(0)}(Z'')dZ''}{(U(Z')-c)^{2}}dZ'.
				\end{align}
			It is straightforward to check that $\mathrm{Ray}[\varphi_1]=-2\alpha U'\varphi_{Ray}^{(0)}-\alpha^2(U-c)\varphi_1.$ Hence, $\varphi_{Ray}^{(0)}+\varphi_{1}$ is the approximation solution to the Rayleigh equation at $\mathcal{O}(\alpha)$. 
		The remainder $\varphi_2$ then solves
				\begin{align}
					\mathrm{Ray}[\varphi_2]=\alpha^2(U-c)\varphi_1.\nonumber
				\end{align}
	 Applying \eqref{eq-Ray-nonhom-Y} to $\varphi_1$ and $\varphi_2$, and using the bound \eqref{eq-lem-ray-sub-profile} for $\varphi^{(0)}_{Ray}$, we obtain
				
				\begin{lemma}\label{lem:ray-R}
				$\varphi_1$ and $\varphi_2$ satisfy the following bounds
				\begin{align}
					\|\varphi_1\|_{\mathcal{Y}_{\eta_0}}&\leq C\alpha|\log c_i|,\label{vphi1}\\
					\|\varphi_2\|_{\mathcal{Y}_{\eta_0}}&\leq C\alpha^3|\log c_i|^2.\label{vphi2}
				\end{align}
				\end{lemma}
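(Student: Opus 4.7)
The plan is to apply the inhomogeneous Rayleigh solvability estimate \eqref{eq-Ray-nonhom-Y} of Proposition \ref{prop:ray-non2} to both $\varphi_1$ and $\varphi_2$, using the pointwise information on $\varphi_{Ray}^{(0)}$ already established in Lemma \ref{lem:ray-main}.

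For $\varphi_1$, I would first observe that, by construction, it satisfies the Rayleigh equation $\mathrm{Ray}[\varphi_1]=-2\alpha U'\varphi_{Ray}^{(0)}-\alpha^2(U-c)\varphi_1$, so one regards the right-hand side as a source $F$ and applies \eqref{eq-Ray-nonhom-Y}. For the main piece $-2\alpha U'\varphi_{Ray}^{(0)}$, the exponential decay of $U'$ and $U''$ at rate $\eta_0$, combined with the uniform $L^\infty$ bounds on $\varphi_{Ray}^{(0)}$ and $\partial_Z\varphi_{Ray}^{(0)}$ furnished by Lemma \ref{lem:ray-main}, together with the weighted bound $\|(U-c)\partial_Z^2\varphi_{Ray}^{(0)}\|_{L^\infty_{\eta_0}}\leq C$ extracted from \eqref{eq-lem-ray-sub-profile} (noting that the leading profile $(U_\infty-c)^{-2}(U-c)e^{-\alpha Z}$ has bounded derivatives of all orders), yields
\begin{align*}
\|2\alpha U'\varphi_{Ray}^{(0)}\|_{L^\infty_{\eta_0}}+\|(U-c)\partial_Z(2\alpha U'\varphi_{Ray}^{(0)})\|_{L^\infty_{\eta_0}}+\|(U-c)^2\partial_Z^2(2\alpha U'\varphi_{Ray}^{(0)})\|_{L^\infty_{\eta_0}}\leq C\alpha.
\end{align*}
The $\alpha^2(U-c)\varphi_1$ correction, viewed as part of the source, is controlled by $C\alpha^2\|\varphi_1\|_{\mathcal{Y}_{\eta_0}}$ in the same weighted norms. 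Feeding both contributions into \eqref{eq-Ray-nonhom-Y} produces
$\|\varphi_1\|_{\mathcal{Y}_{\eta_0}}\leq C\alpha|\log c_i|+C\alpha^2|\log c_i|\|\varphi_1\|_{\mathcal{Y}_{\eta_0}},$
and absorbing the last term via $\alpha^2|\log c_i|\ll 1$ (which follows from \eqref{def-H1}) gives \eqref{vphi1}.

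For $\varphi_2$, I would apply \eqref{eq-Ray-nonhom-Y} directly to $\mathrm{Ray}[\varphi_2]=\alpha^2(U-c)\varphi_1$. The three source norms appearing in \eqref{eq-Ray-nonhom-Y} are each bounded by $C\alpha^2\|\varphi_1\|_{\mathcal{Y}_{\eta_0}}\leq C\alpha^3|\log c_i|$, using the bound on $\varphi_1$ just established, and then \eqref{eq-Ray-nonhom-Y} delivers \eqref{vphi2}.

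The main technical point is verifying that the weighted derivative norms of $F_1=-2\alpha U'\varphi_{Ray}^{(0)}$ are genuinely of size $O(\alpha)$. Two observations are needed that cut against the most naive reading: first, the $(U-c)$ weights built into the $\mathcal{Y}_{\eta_0}$ norm must exactly cancel the critical-layer singularities that $\partial_Z^k\varphi_{Ray}^{(0)}$ develops for $k=2,3$; and second, the exponential decay at rate $\eta_0$ must be inherited from the factors $U'$, $U''$, $U'''$ rather than from $\varphi_{Ray}^{(0)}$ itself, since the latter only decays like $e^{-\alpha Z}$ at infinity. Once these observations are in place, the rest is a direct application of the machinery built in Proposition \ref{prop:ray-non2}.
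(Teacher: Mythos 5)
Your approach matches what the paper intends: treat $\varphi_1$ as solving $\mathrm{Ray}[\varphi_1]=-2\alpha U'\varphi_{Ray}^{(0)}-\alpha^2(U-c)\varphi_1$, bound the explicit source $-2\alpha U'\varphi_{Ray}^{(0)}$ using the exponential decay of $U', U'', U'''$ together with the boundedness of $\varphi_{Ray}^{(0)}$, $\partial_Z\varphi_{Ray}^{(0)}$, and $(U-c)\partial_Z^2\varphi_{Ray}^{(0)}$, then absorb the $\alpha^2(U-c)\varphi_1$ term; finally feed the bound for $\varphi_1$ into the source $\alpha^2(U-c)\varphi_1$ for $\varphi_2$ and apply \eqref{eq-Ray-nonhom-Y} once more. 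This is precisely the paper's (very terse) argument, and your observations about which factor supplies the exponential weight and where the $(U-c)$ weights are needed are exactly the right things to check.

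Two small points worth tightening. First, the intermediate claim $\|(U-c)\partial_Z^2\varphi_{Ray}^{(0)}\|_{L^\infty_{\eta_0}}\leq C$ cannot be right as stated: the leading profile $(U_\infty-c)^{-2}(U-c)e^{-\alpha Z}$ decays only at rate $\alpha\ll\eta_0$, so its second derivative times $(U-c)$ is not in $L^\infty_{\eta_0}$. What you actually use in the product estimate is the \emph{unweighted} bound $\|(U-c)\partial_Z^2\varphi_{Ray}^{(0)}\|_{L^\infty}\lesssim |\log c_i|$ (obtained from the Rayleigh equation \eqref{eq:airy-0} and \eqref{p1}, \eqref{p2}), with the $e^{-\eta_0 Z}$ decay supplied by the accompanying $U'$ factor—which is exactly your own remark in the last paragraph, so the error is local and does not propagate. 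Second, \eqref{eq-Ray-nonhom-Y} is stated as an existence-plus-estimate for the solution \emph{constructed} in Proposition \ref{prop:ray-non2}, not as an a priori estimate for an arbitrary given solution; since $\varphi_1$ is defined by an explicit formula, applying \eqref{eq-Ray-nonhom-Y} to it with the self-referential source requires either an a priori membership of $\varphi_1$ in $\tilde{\mathcal{Y}}_{\eta_0}$ plus uniqueness, or—more directly—the observation that $\varphi_1$ coincides with the $H_2$-piece in the integral reformulation \eqref{bc1.1} with $F=-2\alpha U'\varphi_{Ray}^{(0)}$, so the $H_2$ estimates inside the proof of Proposition \ref{prop:ray-non2} give $\|\varphi_1\|_{\tilde{\mathcal{Y}}_{\eta_0}}\leq C|\log c_i|\|2\alpha U'\varphi_{Ray}^{(0)}\|_{L^\infty_{\eta_0}}$ without any fixed-point absorption. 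Either route closes the argument; the conclusion is correct.
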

			
	The bound of $\partial_c\varphi_R$ is given in the following lemma.
	\begin{lemma}\label{lmphir}	
		The remainder $\varphi_{R}$ satisfies
		\begin{align}
			\|\partial_c\varphi_R\|_{\Tilde{\mathcal{Y}}_{\eta_0}}\leq C\alpha|\log c_i|.\label{phir}
		\end{align}
	\end{lemma}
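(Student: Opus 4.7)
The plan is to mirror the proofs of Lemma~\ref{lmphic} and Lemma~\ref{lem:ray-R}, treating $\partial_c\varphi_1$ and $\partial_c\varphi_2$ separately. The extra factor $\alpha$ already built into the definition~\eqref{def-phi1} should upgrade the $L^\infty$-bound $\|\partial_c\varphi_{Ray}^{(0)}\|_{L^\infty}=O(1)$ of Lemma~\ref{lmphic} into the target $\|\partial_c\varphi_1\|_{\tilde{\mathcal{Y}}_{\eta_0}}=O(\alpha|\log c_i|)$.

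First I would bound $\partial_c\varphi_1$. Setting $G(Z')=\int_{Z'}^\infty U'\varphi_{Ray}^{(0)}\,dZ''$, direct differentiation of \eqref{def-phi1} yields $\partial_c\varphi_1=T_1+T_2+T_3$ where
\[
T_1=2\alpha\int_Z^\infty\frac{G(Z')}{(U-c)^2}\,dZ',\qquad T_2=-4\alpha(U-c)\int_Z^\infty\frac{G(Z')}{(U-c)^3}\,dZ',
\]
\[
T_3=-2\alpha(U-c)\int_Z^\infty\frac{\partial_c G(Z')}{(U-c)^2}\,dZ'.
\]
The term $T_3$ has the same structure as $\varphi_1$ itself with $\varphi_{Ray}^{(0)}$ replaced by $\partial_c\varphi_{Ray}^{(0)}$; since the latter is uniformly bounded by~\eqref{phic}, repeating the derivation of Lemma~\ref{lem:ray-R} for $T_3$ yields $\|T_3\|_{\tilde{\mathcal{Y}}_{\eta_0}}\lesssim\alpha|\log c_i|$.

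The main obstacle is to control $T_1+T_2$. Individually each carries a $(U-c)^{-2}$- or $(U-c)^{-3}$-type singularity that at $Z=Z_c$ would be of size $1/c_i$. However, exactly the cancellation underlying the identity $I_9+I_{10}=O(\alpha|\log c_i|)$ in the proof of Lemma~\ref{lmphic} applies here: inserting the algebraic identity~\eqref{d} into the integrands of $T_1$ and $T_2$ and integrating by parts, the leading pole-type boundary contributions, proportional to $\gamma G(Z)/[U'(Z_c)(U(Z)-c)]$, appear in $T_1$ and $T_2$ with opposite signs and cancel. The residual integrands involve either the regular function $g$ in~\eqref{def-g} or the logarithmically integrable weight $|U-c|^{-1}$, whose $L^1$ norm is bounded by $|\log c_i|$, yielding $\|T_1+T_2\|_{L^\infty_{\eta_0}}\lesssim\alpha|\log c_i|$. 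The tangential derivative bound $\|\partial_Z\partial_c\varphi_1\|_{L^\infty_{\eta_0}}$ is obtained by differentiating once in $Z$ and repeating the same cancellation, while the weighted bound $\|(U-c)\partial_Z^2\partial_c\varphi_1\|_{L^\infty_{\eta_0}}$ is read off from the $c$-differentiated Rayleigh equation
\[
\mathrm{Ray}[\partial_c\varphi_1]=(\partial_Z^2-\alpha^2)\varphi_1-2\alpha U'\partial_c\varphi_{Ray}^{(0)}-\alpha^2(U-c)\partial_c\varphi_1,
\]
using the estimates already available for $\varphi_1$ and for the lower-order components of $\partial_c\varphi_1$.

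For $\partial_c\varphi_2$ I would repeat the same strategy on the integral representation~\eqref{bc1.1} with source $F=\alpha^2(U-c)\varphi_1$. Since $\|\varphi_2\|_{\mathcal{Y}_{\eta_0}}\lesssim\alpha^3|\log c_i|^2$, $\|\varphi_1\|_{\mathcal{Y}_{\eta_0}}\lesssim\alpha|\log c_i|$, and $\|\partial_c\varphi_1\|_{\tilde{\mathcal{Y}}_{\eta_0}}\lesssim\alpha|\log c_i|$ from the previous step, a contraction argument based on the smallness $\alpha^2|\log c_i|\ll 1$ from~\eqref{def-H1} produces $\|\partial_c\varphi_2\|_{\tilde{\mathcal{Y}}_{\eta_0}}\lesssim\alpha^3|\log c_i|^3$, which is absorbed into $\alpha|\log c_i|$. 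Combining with the bound on $\partial_c\varphi_1$ completes the proof.
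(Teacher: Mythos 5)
Your proof is correct but takes a genuinely different route from the paper's. The paper's argument is much shorter: it differentiates the Rayleigh equation $\mathrm{Ray}[\varphi_R]=-2\alpha U'\varphi_{Ray}^{(0)}$ in $c$ to obtain
\begin{align*}
\mathrm{Ray}[\partial_c\varphi_R]=-2\alpha U'\partial_c\varphi_{Ray}^{(0)}+(\partial_Z^2-\alpha^2)\varphi_R,
\end{align*}
and then applies the inhomogeneous Rayleigh estimate~\eqref{eq-Ray-nonhom-tilde-Y} to $\partial_c\varphi_R$ in one stroke, using~\eqref{phic} for the first source term and the bounds~\eqref{vphi1}--\eqref{vphi2} to control the second. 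You instead differentiate the integral formula~\eqref{def-phi1} explicitly, split $\partial_c\varphi_1=T_1+T_2+T_3$, treat $T_3$ as a copy of $\varphi_1$ with $\varphi_{Ray}^{(0)}$ replaced by $\partial_c\varphi_{Ray}^{(0)}$, and exhibit the cancellation between the $(U-c)^{-1}$--type boundary terms produced by $T_1$ and $T_2$ upon integration by parts; $\partial_c\varphi_2$ is then absorbed by a contraction argument. The two routes buy different things: the paper's proof is shorter and entirely avoids re-opening the integral representation, but it hinges on bounding $\|(\partial_Z^2-\alpha^2)\varphi_R\|_{L^\infty_{\eta_0}}$ from the $\mathcal{Y}_{\eta_0}$-norms of $\varphi_1,\varphi_2$ — which only control the weighted quantity $\|(U-c)\partial_Z^2\varphi_R\|_{L^\infty_{\eta_0}}$, so removing the $(U-c)$ weight near $Z_c$ is a nontrivial step that the paper leaves implicit. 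Your explicit cancellation between $T_1$ and $T_2$ is exactly the mechanism that makes this work, so your version is longer but more transparent about where the smallness comes from. Both ultimately use the same tools (the integration-by-parts identity~\eqref{d}/\eqref{g}, the bound $\|\partial_c\varphi_{Ray}^{(0)}\|_{L^\infty}\leq C$, and the inhomogeneous Rayleigh estimate), just organized differently.
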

	\begin{proof}
		Note that $\partial_c\varphi_R$ satisfies the following equation
		\begin{align}\nonumber
			\mathrm{Ray}[\partial_c\varphi_R]=-2\alpha U'\partial_c\varphi_{Ray}^{(0)}+(\partial_Z^2-\alpha^2)\varphi_R.	
		\end{align}
		Applying the Rayleigh estimate \eqref{eq-Ray-nonhom-tilde-Y} to $\partial_c\varphi_R$, we deduce that
		\begin{align}
			\|\partial_c\varphi_R\|_{\Tilde{\mathcal{Y}}_{\eta_0}}&\leq C|\log c_i|\left(\|(\partial_Z^2-\alpha^2)\varphi_R\|_{L^\infty_{\eta_0}}+\alpha\|\partial_c\varphi_{Ray}^{(0)}\|_{L^\infty}	\right)\nonumber\\
			&\leq C\alpha |\log c_i|,\nonumber
		\end{align}
		where we have used bounds \eqref{phic}, \eqref{vphi1} and \eqref{vphi2} in the last step. Thus, the proof of Lemma \ref{lmphir} is complete.
	\end{proof}
	In the following lemma, we derive the asymptotic values of $\varphi_{R}(0)$, which are essential for the analysis of the dispersion relation.
				
				\begin{lemma}\label{lem:ray-R-asy}
					Let $(\alpha,c)\in \mathbb{H}_1$. If $c_i\ll \min\{\alpha, c_r\}$, then
					\begin{align}
						\mathrm{Re}\big(\varphi_R(0)\big) &= -\frac{\alpha}{U'(Z_c)}+\frac{U''(Z_c)}{U'(Z_c)^3} \alpha c_r\log|c|+\mathcal{O}\big(\alpha (\alpha+c_r)\big),\label{r1}\\
						\mathrm{Im}\left(\varphi_{R}(0)\right)&=\frac{ U''(Z_c)}{U'(Z_c)^3}
						\alpha\big(c_r\arg(-c)+c_i\log|c|\big)+o(\alpha (\alpha+c_r)),\label{r2}\\
						\mathrm{Re}\big(\partial_Z\varphi_R(0)\big)&=-\frac{U''(Z_c)}{U'(Z_c)^2}\alpha \log|c|+\mathcal{O}(\alpha+c_r),\label{r3}\\
						\mathrm{Im}\left(\partial_Z\varphi_{R}(0) \right)&=-\frac{ U''(Z_c)}{U'(Z_c)^2}\alpha\arg(-c) +o(\alpha).\label{r4}
					\end{align}
				\end{lemma}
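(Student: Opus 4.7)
By the decomposition $\varphi_R = \varphi_1 + \varphi_2$ in \eqref{def-phi-R} and the bound \eqref{vphi2}, we have $|\varphi_2(0)| + |\partial_Z\varphi_2(0)| \le C\alpha^3|\log c_i|^2$. Since $\alpha|\log c_i|\ll 1$ throughout $\mathbb{H}_1$, this is bounded both by $\alpha^2 \le \alpha(\alpha+c_r)$ and by $\alpha$, so $\varphi_2$ is absorbed into every error term in \eqref{r1}--\eqref{r4}. It therefore suffices to estimate the boundary values of $\varphi_1$.

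Evaluating the explicit formula \eqref{def-phi1} at $Z=0$ gives
$$
\varphi_1(0) = 2\alpha c \int_0^{+\infty}\frac{J(Z')}{(U(Z')-c)^2}\,dZ',\qquad J(Z') \triangleq \int_{Z'}^{+\infty} U'(Z'')\,\varphi_{Ray}^{(0)}(Z'')\,dZ''.
$$
The first step is to show that $J$ is smooth in $Z'$ and uniformly $O(1)$, with $J(0) = \tfrac12 + O(\alpha+|c|)$. This follows by inserting the pointwise expansion \eqref{p1}--\eqref{p2} of $\varphi_{Ray}^{(0)}$ into the definition of $J(0)$: the leading integral $\int_0^\infty U'(U-c)e^{-\alpha Z''}\,dZ''$ is handled via $U'(U-c)=\tfrac12\partial_{Z''}(U-c)^2$ and integration by parts against $e^{-\alpha Z''}$, the near-boundary correction $\tfrac{2\alpha}{U'(Z_c)}\int_0^{Z_0} U'e^{-\alpha Z''}\,dZ''$ is of order $\alpha$, and the remainder is controlled by the error bounds in \eqref{p1}--\eqref{p2}.

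The singular behaviour of $\varphi_1(0)$ then comes entirely from the factor $(U-c)^{-2}$ concentrated near the critical layer $Z_c$. I would split the outer $dZ'$-integral at $Z_0$; the outer piece is $O(\alpha|c|)$ and harmless. On $[0,Z_0]$ I apply the algebraic identity \eqref{d} to write $1 = U'(Z_c)^{-1}\bigl[U'(\gamma - U''(Z_c)(U-c)/U'(Z_c)^2) - g\bigr]$, then integrate by parts using $\partial_Z[-(U-c)^{-1}]=U'/(U-c)^2$ and $\partial_Z\log(U-c)=U'/(U-c)$; the $g$-piece is regular by \eqref{bound-g}. The boundary contribution at $Z=0$ produces the singular factors $-\gamma/c$ and $U''(Z_c)U'(Z_c)^{-2}\log(-c)$. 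Multiplying by the prefactor $2\alpha c\cdot J(0)$ and using $J(0)=\tfrac12+O(|c|+\alpha)$ gives
$$
\varphi_1(0) = -\frac{\alpha\gamma}{U'(Z_c)} + \frac{\alpha c\, U''(Z_c)}{U'(Z_c)^3}\log(-c) + O\!\bigl(\alpha(\alpha+c_r)\bigr).
$$
Expanding $\log(-c)=\log|c|+i\arg(-c)$ and $\gamma = 1 - ic_i U''(Z_c)/U'(Z_c)^2$, then separating real and imaginary parts under $c_i\ll c_r$, produces \eqref{r1} and \eqref{r2}.

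For the derivative I would differentiate \eqref{def-phi1} directly to obtain the identity
$$
\partial_Z\varphi_1(0) = -\frac{U'(0)}{c}\,\varphi_1(0) - \frac{2\alpha J(0)}{c},
$$
which is the analogue of \eqref{Z1}. Substituting the expansion of $\varphi_1(0)$ just obtained, together with $J(0)=\tfrac12+O(|c|+\alpha)$ and $U'(Z_c)=U'(0)+O(c_r)$, yields \eqref{r3}--\eqref{r4} by separation of real and imaginary parts. The main technical obstacle is purely the bookkeeping: one must carry enough accuracy in both the expansion of $\varphi_{Ray}^{(0)}$ and of $J$ to ensure that the sub-leading combinations $\alpha c_r\log|c|$ (real) and $\alpha\bigl(c_r\arg(-c)+c_i\log|c|\bigr)$ (imaginary) survive with the correct coefficients once the prefactor $c$ is restored. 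This is entirely parallel to the analysis already performed for $\varphi_{Ray}^{(0)}(0)$ in Lemma \ref{lem:ray-main-asy} and introduces no essentially new difficulty.
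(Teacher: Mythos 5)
Your plan follows the paper's route quite closely: you dispose of $\varphi_2$ via \eqref{vphi2}, set $J(Z')=\int_{Z'}^\infty U'\varphi_{Ray}^{(0)}$ (the paper's $G^{(0)}$), split the $dZ'$-integral at $Z_0$, apply the algebraic identity \eqref{d}, and integrate by parts so that the singular boundary contribution at $Z'=0$ carries $J(0)$ against the antiderivatives $-\frac{1}{U-c}$ and $\log(U-c)$. The identity $\partial_Z\varphi_1(0)=-U'(0)c^{-1}\varphi_1(0)-2\alpha c^{-1}J(0)$ matches \eqref{phid}. Structurally this is correct.

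The gap is in the error accounting. You assert
\begin{equation*}
\varphi_1(0) = -\frac{\alpha\gamma}{U'(Z_c)} + \frac{\alpha c\,U''(Z_c)}{U'(Z_c)^3}\log(-c) + O\big(\alpha(\alpha+c_r)\big),
\end{equation*}
and then separate real and imaginary parts. But when $c_r\sim\alpha$, the imaginary leading term $\frac{U''(Z_c)}{U'(Z_c)^3}\alpha\big(c_r\arg(-c)+c_i\log|c|\big)$ is itself of size $\sim\alpha^2\sim\alpha(\alpha+c_r)$, so an $O(\alpha(\alpha+c_r))$ error would swallow it entirely, and \eqref{r2} cannot follow. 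The lemma explicitly requires the imaginary-part remainder to be $o(\alpha(\alpha+c_r))$ while tolerating $O(\alpha(\alpha+c_r))$ only in the real part. This asymmetric precision is where nearly all the effort in the paper's proof goes: for each of the pieces $K_1,\dots,K_4$ one must show $\mathrm{Re}=O(\alpha c_r)$ but $\mathrm{Im}=o(\alpha c_r)$, which in turn rests on quantitative smallness of $\mathrm{Im}\int_0^\infty\log(U-c)U'\varphi_{Ray}^{(0)}\,dZ$ (controlled via the critical-layer structure and $\arg(U-c)$) and on the fine real/imaginary decomposition \eqref{G-part} of $G^{(0)}(Z)$ coming from the explicit formula \eqref{G-exp}. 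Your statement that this is ``entirely parallel'' bookkeeping with ``no essentially new difficulty'' misses that the central difficulty is precisely upgrading $O$ to $o$ in the imaginary part of every non-leading contribution; if you keep only the crude $O(\alpha(\alpha+c_r))$ bound the argument does not close. The same remark applies to \eqref{r4}, which needs $o(\alpha)$ rather than $O(\alpha)$.
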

				
				\begin{proof}
						Evaluating \eqref{def-phi-R} at $Z=0$, we have $\varphi_R(0)=\varphi_1(0)+\varphi_2(0)$. By \eqref{vphi2}, we obtain 
					\begin{align}
						|\partial_Z\varphi_2(0)|+|\varphi_2(0)|\lesssim \alpha^3|\log c_i|^2\ll \alpha^2,\label{phi2}
					\end{align}
					and it remains to study $\varphi_1(0)$. For this, we denote
						\begin{align}
							G^{(0)}(Z)\triangleq \int_{Z}^{+\infty}U'(Z')\varphi_{Ray}^{(0)}(Z')dZ'.\nonumber
						\end{align} 
		Then applying the decomposition \eqref{d} to \eqref{def-phi1}, we have	
						\begin{align}\label{phi1}\begin{aligned}
								\varphi_1(Z)=&-2\alpha(U(Z)-c)\int_{Z}^\infty\frac{G^{(0)}(Z')}{(U(Z')-c)^2}d Z'
								\\
								= & \frac{2\alpha (U(Z)-c)}{U'(Z_c)}\left(\int_{Z}^{+\infty}\frac{g(Z')G^{(0)}(Z')}{(U(Z')-c)^2}dZ'
								+\frac{U''(Z_c)}{U'(Z_c)^2}\int_{Z}^{+\infty}\frac{U'(Z')G^{(0)}(Z')}{U(Z')-c}dZ'\right.\\
								&\qquad\qquad\qquad\left.%\left(1-\frac{ic_iU''(Z_c)}{U'(Z_c)^2}\right)
								-\gamma\int_{Z}^{+\infty}\frac{U'(Z')G^{(0)}(Z')}{(U(Z')-c)^2}dZ'\right)\\
								\triangleq &\frac{2\alpha (U(Z)-c)}{U'(Z_c)}\Big(J_1(Z)+J_2(Z)+J_3(Z)\Big).
						\end{aligned}\end{align}
				Integrating by parts yields
						\begin{align}\label{j2}
							J_2(Z)= & \frac{U''(Z_c)}{U'(Z_c)^2}\left(-\log(U(Z)-c)
							G^{(0)}(Z)
							+\int_{Z}^{+\infty}\log(U(Z')-c)U'(Z')\varphi^{(0)}_{Ray}(Z') dZ'\right). \end{align}
				For $J_3$, by using the explicit formula \eqref{eq:ray-main-def} of $\varphi^{(0)}_{Ray}(Z)$ and integrating by parts,  we obtain
						\begin{align}
							J_3(Z)= & -\gamma\frac{G^{(0)}(Z)}{U(Z)-c}     +\gamma\int_{Z}^{+\infty}\frac{U'(Z')\varphi_{Ray}^{(0)}(Z')}{U(Z')-c}dZ'\nonumber\\
							= & -\gamma\frac{G^{(0)}(Z)}{U(Z)-c}     +2\alpha\gamma\int_{Z}^{+\infty}e^{\alpha Z'}\left(\int_{Z'}^{+\infty}\frac{e^{-2\alpha Z''}dZ''}{(U(Z'')-c)^2}\right)d(U(Z')-c)\nonumber\\
							= &-\gamma\frac{G^{(0)}(Z)}{U(Z)-c}-\gamma\varphi_{Ray}^{(0)}(Z)
							+\alpha\gamma\int_{Z}^{+\infty}\left(\frac{2e^{-\alpha Z'}}{U(Z')-c}-\varphi_{Ray}^{(0)}(Z')\right)dZ'.\label{j3}
						\end{align}
					Substituting \eqref{j2} and \eqref{j3} into \eqref{phi1}, we obtain the following decomposition for $\varphi_1$:
						\begin{align}\label{phi1-new}\begin{aligned}
								\varphi_1(Z)= &\sum_{i=1}^{4}K_i(Z), %\frac{2\alpha(U(Z)-c)}{U'(Z_c)}\Big(J_1(Z)+K_1(Z)+K_2(Z)\Big)+K_3(Z),
								%-\frac{2\alpha G(Z)}{U'(Z_c)}\left(\beta+\frac{2\alpha U''(Z_c)}{U'(Z_c)^3}(U(Z)-c)\log(U(Z)-c)\right)\\ &+\frac{2\alpha(U(Z)-c)}{U'(Z_c)}(J_1+J_4).
								%\int_{Z}^{+\infty}  \left(\frac{U''(Z_c)}{U'(Z_c)^2}U'(Z')\log(U(Z')-c)  +\frac{\beta U'(Z')}{U(Z')-c}\right)\varphi_{Ray}^{(0)}(Z')dZ'\right).
						\end{aligned}\end{align} 
						where
						\begin{align}\begin{aligned}
								K_1(Z)= &\frac{2\alpha U''(Z_c)}{U'(Z_c)^3}(U(Z)-c)\int_{Z}^{+\infty}
								\log(U(Z')-c)
								U'(Z')\varphi_{Ray}^{(0)}(Z')dZ',\\
								K_2(Z)= &\frac{2\alpha^2\gamma}{U'(Z_c)}(U(Z)-c)\int_{Z}^{+\infty}\left(\frac{2e^{-\alpha Z'}}{U(Z')-c}-\varphi_{Ray}^{(0)}(Z')\right)dZ',\\
								K_3(Z)=&\frac{2\alpha (U(Z)-c)}{U'(Z_c)}\int_{Z}^{+\infty}\frac{g(Z')G^{(0)}(Z')}{(U(Z')-c)^2}dZ',\\
								K_4(Z) =& -\frac{2\alpha }{U'(Z_c)}G^{(0)}(Z)\left(\gamma+\frac{U''(Z_c)}{U'(Z_c)^2}(U(Z)-c)\log(U(Z)-c)\right)\\
								&-\frac{2\alpha\gamma}{U'(Z_c)}(U(Z)-c)\varphi_{Ray}^{(0)}(Z).\nonumber
						\end{aligned}\end{align}
Evaluating \eqref{phi1-new} at $Z=0$, we obtain
			\begin{align}\label{est-phi1-0}\begin{aligned}
					\varphi_1(0)= & \sum_{i=1}^{4}K_i(0),
						\end{aligned}\end{align}
		where
			\begin{align}\begin{aligned}
					K_1(0)= &-\frac{2\alpha c U''(Z_c)}{U'(Z_c)^3}\int_{0}^{+\infty}
					\log(U(Z)-c) U'(Z)\varphi_{Ray}^{(0)}(Z)dZ,\\
					K_2(0)= &-\frac{2\alpha^2\gamma c}{U'(Z_c)}\int_{0}^{+\infty}\left(\frac{2e^{-\alpha Z}}{U(Z)-c}-\varphi_{Ray}^{(0)}(Z)\right)dZ,\\
					K_3(0)=&-\frac{2\alpha c}{U'(Z_c)}\int_{0}^{+\infty}\frac{g(Z)G^{(0)}(Z)}{(U(Z)-c)^2}dZ,\\
					K_4(0) =& \frac{2\alpha G^{(0)}(0)}{U'(Z_c)}\left(-\gamma+\frac{U''(Z_c)}{U'(Z_c)^2}c\log(-c)\right)
					+\frac{2\alpha\gamma c}{U'(Z_c)}\varphi_{Ray}^{(0)}(0).\nonumber
			\end{aligned}\end{align}
		We now estimate $K_1(0)$ to $K_4(0)$ term by term. It turns out that, among these, only the localized term $K_4(0)$ contributes at leading order in the expansions \eqref{r1} and \eqref{r2}.
		
	 First, we estimate $K_1(0)$.  From the asymptotic formula \eqref{p1} and \eqref{p2} of $\varphi^{(0)}_{Ray}(Z)$, we obtain 
			\begin{align}\begin{aligned}
					\varphi_{Ray}^{(0)}(Z)&=\frac{e^{-\alpha Z}(U(Z)-c)}{(U_\infty-c)^2}+\mathcal{O}(1)\alpha|\log c_i|\\
					&=\frac{e^{-\alpha Z}(U(Z)-c_r)}{(U_\infty-c_r)^2}+\mathcal{O}(1)\big(c_i+\alpha|\log c_i|\big).\nonumber
			\end{aligned}\end{align}
		Then it holds that
			\begin{align*}
				&\int_{0}^{+\infty}\log(U(Z)-c) U'(Z)\varphi_{Ray}^{(0)}(Z)dZ\\
				&= \int_{0}^{+\infty}\log(U(Z)-c) U'(Z)\frac{e^{-\alpha Z}(U(Z)-c_r)}{(U_\infty-c_r)^2}dZ+\mathcal{O}(1)\big(c_i+\alpha |\log c_i|\big)\\
				&=\mathcal{O}(1).
				%&=\frac{c_r\log(-c)}{2U_\infty^2}+\frac{\alpha}{2U_\infty^2}\int_{0}^{+\infty}e^{-\alpha Z}(U(Z)-c_r)^2\log(U(Z)-c)dZ\\
				%   &\quad-\frac{1}{2U_\infty^2}\int_{0}^{+\infty}e^{-\alpha Z}(U(Z)-c_r)^2\frac{U'(Z)}{U(Z)-c}dZ+\mathcal{O}(1)\alpha |\log c_i|\\
				%   &=-\frac{1}{2U_\infty^2}\int_{0}^{+\infty}e^{-\alpha Z}(U(Z)-c_r)U'(Z)dZ+\mathcal{O}(1)\big((\alpha+c_r) |\log c_i|+\alpha+c_i\big),
			\end{align*}
			Moreover, note that
			\begin{align*}
				&\text{Im}\Big((U(Z)-c_r)\log(U(Z)-c)\Big)= (U(Z)-c_r)\arg(U(Z)-c)\\
				&=\begin{cases}
					(U(Z)-c_r)\arctan\left(\frac{-c_i}{U(Z)-c_r}\right)=\mathcal{O}(c_i), & \mbox{if } Z\geq Z_c, \\
					(U(Z)-c_r)\left[\arctan\left(\frac{-c_i}{U(Z)-c_r}\right)-\pi\right]=\mathcal{O}(c_i+c_r), & \mbox{if~} 0\leq Z\leq Z_c,
				\end{cases} 
			\end{align*}
			where we have used the boundedness of function $\displaystyle \frac{\arctan x}{x}$. Thus, we obtain
			\begin{align*}
				&\text{Im}\left(\int_{0}^{+\infty}\log(U(Z)-c) U'(Z)\frac{e^{-\alpha Z}(U(Z)-c_r)}{(U_\infty-c_r)^2}dZ\right)\\
				& =\left(\int_{Z_c}^{+\infty}+\int_{0}^{Z_c}\right)\text{Im}\Big((U(Z)-c_r)\log(U(Z)-c) \Big)\frac{e^{-\alpha Z}U'(Z)}{(U_\infty-c_r)^2}dZ\\
				&=\mathcal{O}(c_i)+\mathcal{O}\big((c_i+c_r)Z_c\big)=\mathcal{O}(c_i+c_r^2),
			\end{align*}
			where we have used $Z_c\sim c_r$ in the last step. Consequently, it follows that
			\begin{align}\begin{aligned}
					\text{Re}\left(\int_{0}^{+\infty}\log(U(Z)-c) U'(Z)\varphi_{Ray}^{(0)}(Z)dZ\right) & =\mathcal{O}(1)\big(1+\alpha|\log c_i|+c_i\big)=\mathcal{O}(1), \\
					\text{Im}\left(\int_{0}^{+\infty}\log(U(Z)-c) U'(Z)\varphi_{Ray}^{(0)}(Z)dZ\right) & =\mathcal{O}(1)\big(\alpha|\log c_i|+c_i+c_r^2\big).\nonumber
			\end{aligned}\end{align}
Substituting these two equalities into $K_1(0)$ yields
			\begin{align}\label{K1-0-im}\begin{aligned}
					\text{Re}\big(K_1(0)\big) =& -\frac{2\alpha c_r U''(Z_c)}{U'(Z_c)^3} \text{Re}\left(\int_{0}^{+\infty}\log(U(Z)-c) U'(Z)\varphi_{Ray}^{(0)}(Z)dZ\right)\\
					&+\frac{2\alpha c_i U''(Z_c)}{U'(Z_c)^3} \text{Im}\left(\int_{0}^{+\infty}\log(U(Z)-c) U'(Z)\varphi_{Ray}^{(0)}(Z)dZ\right)\\
					= &~ \mathcal{O}(\alpha c_r),\\
					\text{Im}\big(K_1(0)\big) =& -\frac{2\alpha c_r U''(Z_c)}{U'(Z_c)^3} \text{Im}\left(\int_{0}^{+\infty}\log(U(Z)-c) U'(Z)\varphi_{Ray}^{(0)}(Z)dZ\right)\\
					&-\frac{2\alpha c_iU''(Z_c)}{U'(Z_c)^3} \text{Re}\left(\int_{0}^{+\infty}\log(U(Z)-c) U'(Z)\varphi_{Ray}^{(0)}(Z)dZ\right)\\
					= &~ \mathcal{O}(1)\alpha c_r\big(\alpha|\log c_i|+c_i+c_r^2\big)+\mathcal{O}(1)\alpha c_i=~o(\alpha c_r).
			\end{aligned}\end{align}
		
For $K_2(0)$, from the asymptotic formula \eqref{p1} and \eqref{p2} of $\varphi^{(0)}_{Ray}$, we obtain
			\begin{align*}
				& \int_{0}^{+\infty}\left(\frac{2e^{-\alpha Z}}{U(Z)-c}-\varphi_{Ray}^{(0)}(Z)\right)dZ\\
				&=\int_{0}^{+\infty}\left(\frac{2e^{-\alpha Z}}{U(Z)-c}-\frac{e^{-\alpha Z}(U(Z)-c)}{(U_\infty-c)^2}\right)dZ+\mathcal{O}(1)\alpha|\log c_i|\\
				&=\int_{0}^{+\infty}\left(\frac{2e^{-\alpha Z}}{U_\infty-c}-\frac{e^{-\alpha Z}}{U_\infty-c}\right)dZ\\
				&\quad+\mathcal{O}(1)\int_{0}^{+\infty}|U_\infty-U(Z)|\left(\frac{1}{|U(Z)-c|}+1\right)dZ+\mathcal{O}(1)\alpha|\log c_i|\\
				&=\frac{1}{\alpha(U_\infty-c)}+\mathcal{O}(1)|\log c_i|.
			\end{align*}
		Substituting it into $K_{2}(0)$, we deduce that
			\begin{align*}
				K_2(0)= & -\frac{2\alpha\gamma c}{U'(Z_c)(U_\infty-c)}
				+\mathcal{O}(1)\alpha^2|c\log c_i|.
			\end{align*}
		Taking real and imaginary parts of the above equality respectively, we get
			\begin{align}\label{K2-0-im}\begin{aligned}
					\text{Re}\big(K_2(0)\big) =& \mathcal{O}(\alpha c_r),\quad
					\text{Im}\big(K_2(0)\big) =& \mathcal{O}(1)\big(\alpha c_i+\alpha^2|c\log c_i|\big)=o(\alpha c_r).
			\end{aligned}\end{align}

The estimate of $K_3(0)$ is more subtle. We need to establish a more accurate explicit expression of $G^{(0)}(Z)$. Substituting \eqref{eq:ray-main-def} into $G^{(0)}(Z)$ and integrating by parts, we obtain
			\begin{align}\label{G-new}\begin{aligned}
					G^{(0)}(Z) =& \alpha\int_{Z}^{+\infty}e^{\alpha Z'}\left(\int_{Z'}^{+\infty}\frac{e^{-2\alpha Z''}d Z''}{(U(Z'')-c)^2}\right)d(U(Z')-c)^2\\
					%=&-\alpha e^{\alpha Z}(U(Z)-c)^2\int_{Z}^{+\infty}\frac{e^{-2\alpha \tilde{Z}}d\tilde{Z}}{(U(\tilde{Z})-c)^2}+\alpha\int_{Z}^{+\infty}e^{-\alpha Z'}dZ'\\
					%&-\alpha^2\int_{Z}^{+\infty}e^{\alpha Z'}(U(Z')-c)^2\left(\int_{Z'}^{+\infty}\frac{e^{-2\alpha \tilde{Z}}d\tilde{Z}}{(U(\tilde{Z})-c)^2}\right)dZ'
					=& e^{-\alpha Z}-\frac{1}{2}(U(Z)-c)\varphi_{Ray}^{(0)}(Z)
					-\frac{\alpha}{2}\int_{Z}^{+\infty}(U(Z')-c)\varphi_{Ray}^{(0)}(Z')dZ'.
			\end{aligned}\end{align}
			Thanks to \eqref{p1} and \eqref{p2}, we rewrite
			\begin{align*}
				(U(Z)-c)\varphi_{Ray}^{(0)}(Z)= &~ e^{-\alpha Z}\frac{(U(Z)-c)^2}{(U_\infty-c)^2}+\mathcal{O}(1)\alpha|\log c_i||U(Z)-c|e^{-\eta_0 Z}.
			\end{align*}
	Introduce $h(Z)\triangleq \frac{(U(Z)-c)^2}{(U_\infty-c)^2}-1$. It is clear that $h(Z) \in L^\infty_{\eta_0}$ and 
			\begin{align}\label{def-h}
			h(Z)= \frac{(U(Z)-c_r)^2}{(U_\infty-c_r)^2}-1+\mathcal{O}(1)c_ie^{-\eta_0 Z}.
			\end{align}
	Then from \eqref{G-new}, we deduce that
			\begin{align}\label{G-exp}\begin{aligned}
					G^{(0)}(Z)= &~e^{-\alpha Z}-\frac{e^{-\alpha Z}}{2}\frac{(U(Z)-c)^2}{(U_\infty-c)^2}-\frac{\alpha}{2}\int_{Z}^{+\infty}\frac{(U(Z')-c)^2}{(U_\infty-c)^2}e^{-\alpha Z'}dZ'\\
					&+\mathcal{O}(1)\big(\alpha|\log c_i||U(Z)-c|e^{-\eta_0 Z}+\alpha^2|\log c_i|e^{-\eta_0 Z}\big)\\
					=&-\frac{ e^{-\alpha Z}h(Z)}{2}-\frac{\alpha}{2}\int_{Z}^{+\infty}%\left(\frac{(U(Z')-c)^2}{(U_\infty-c)^2}-1\right)
					e^{-\alpha Z'}h(Z')dZ'+\mathcal{O}(1)\alpha|\log c_i|\big(|U(Z)-c|+\alpha\big)e^{-\eta_0 Z}.
					%=&-\frac{e^{-\alpha Z}}{2}h(Z)-\frac{\alpha}{2}\int_{Z}^{+\infty}%\left(\frac{(U(Z')-c)^2}{(U_\infty-c)^2}-1\right)
					%e^{-\alpha Z'}h(Z')dZ'+\mathcal{O}(1)\alpha|\log c_i|(|U(Z)-c|+\alpha)e^{-\eta_0 Z}
			\end{aligned}\end{align}
	Substituting \eqref{def-h} into \eqref{G-exp}, then taking real and imaginary parts respectively, we deduce that
			\begin{align}\label{G-part}\begin{aligned}
					\text{Re}\big(G^{(0)}(Z)\big)=&~\frac{e^{-\alpha Z}}{2}\left(1-\frac{(U(Z)-c_r)^2}{(U_\infty-c_r)^2}\right) +\mathcal{O}(1)\Big(c_i+\alpha+\alpha|\log c_i|\big(|U(Z)-c|+\alpha\big)\Big)e^{-\eta_0Z},\\
					=&~\mathcal{O}(1)e^{-\eta_0Z},\\
					\text{Im}\big( G^{(0)}(Z)\big)=&~\mathcal{O}(1)c_ie^{-\eta_0Z}%+\mathcal{O}(1)\alpha c_ie^{-\eta_0Z}
					+\mathcal{O}(1)\alpha|\log c_i|\big(|U(Z)-c|+\alpha\big)e^{-\eta_0 Z}.
			\end{aligned}\end{align}
			Thus, it along with \eqref{g-part} yields 
			\begin{align}\begin{aligned}
					\text{Re}\left(\int_{0}^{+\infty}\frac{g(Z)G^{(0)}(Z)}{(U(Z)-c)^2}dZ\right) =&~ \mathcal{O}(1)\int_{0}^{+\infty}e^{-\eta_0Z}\left(1+\frac{c_i}{|U(Z)-c|} \left(c_i+\alpha|\log c_i|\right)\right)dZ,\\
					%\int_{0}^{+\infty}\left(\Re\left(\frac{g(Z)}{(U(Z)-c)^2}\right)\Re\big(G(Z)\big)-\Im\left(\frac{g(Z)}{(U(Z)-c)^2}\right)\Im\big(G(Z)\big)\right)dZ
					=&~ \mathcal{O}(1),\\%\left(1+c_i|\log c_i| \left(c_i+\alpha|\log c_i|\right)\right)\\
					\text{Im}\left(\int_{0}^{+\infty}\frac{g(Z)G^{(0)}(Z)}{(U(Z)-c)^2}dZ\right) =&~ \mathcal{O}(1)\int_{0}^{+\infty}e^{-\eta_0Z}\left(\left(c_i+\alpha|\log c_i|\right)+\frac{c_i}{|U(Z)-c|} \right)dZ.\\
					%\int_{0}^{+\infty}\left(\Re\left(\frac{g(Z)}{(U(Z)-c)^2}\right)\Re\big(G(Z)\big)-\Im\left(\frac{g(Z)}{(U(Z)-c)^2}\right)\Im\big(G(Z)\big)\right)dZ
					=&~ \mathcal{O}(1)(\alpha+c_i)|\log c_i|.\nonumber
			\end{aligned}\end{align}
	Substituting these bounds into $K_3(0)$, we arrive at
			\begin{align}\label{K3-0-im}\begin{aligned}
					\text{Re}\big(K_3(0)\big) =& -\frac{2\alpha}{U'(Z_c)}\left(c_r\text{Re}\left(\int_{0}^{+\infty}\frac{g(Z)G^{(0)}(Z)}{(U(Z)-c)^2}dZ\right) -c_i\text{Im}\left(\int_{0}^{+\infty}\frac{g(Z)G^{(0)}(Z)}{(U(Z)-c)^2}dZ\right) \right)\\
					=&~\mathcal{O}(\alpha c_r),\\
					\text{Im}\big(K_3(0)\big) =& -\frac{2\alpha}{U'(Z_c)}\left(c_r\text{Im}\left(\int_{0}^{+\infty}\frac{g(Z)G^{(0)}(Z)}{(U(Z)-c)^2}dZ\right) +c_i\text{Re}\left(\int_{0}^{+\infty}\frac{g(Z)G^{(0)}(Z)}{(U(Z)-c)^2}dZ\right) \right)\\
					=&~\mathcal{O}(1)\alpha\Big(c_r(\alpha+c_i)|\log c_i|+c_i\Big)=o(\alpha c_r).
			\end{aligned}\end{align}

Finally, we evaluate $K_4(0)$. Evaluating \eqref{G-part} at $Z=0$, we obtain 
			\begin{align}\label{G-0-part}\begin{aligned}
					\text{Re}\big( G^{(0)}(0)\big)= &~ \frac{1}{2}\left(1-\frac{c_r^2}{(U_\infty-c_r)^2}\right) +\mathcal{O}(1)\Big(c_i+\alpha+\alpha|\log c_i|\big(|c|+\alpha\big)\Big)\\
					=&~\frac{1}{2}+\mathcal{O}(1)\big(c_r+\alpha\big),\\%\big(c_r^2+c_i+\alpha\big),\\
					\text{Im}\big( G^{(0)}(0)\big)
					=&~\mathcal{O}(1)\big(c_i+\alpha|\log c_i|(|c|+\alpha)\big).
			\end{aligned}\end{align}
Substituting \eqref{G-0-part} and \eqref{phi0-0-exp} into $K_4(0)$, and recalling \eqref{gamma} the definition of $\gamma$, we deduce
			\begin{align}\label{K4-0-im}\begin{aligned}
					\text{Re}\big(K_4(0)\big) =& \frac{2\alpha}{U'(Z_c)}\left\{\text{Re}\big(G^{(0)}(0)\big) \text{Re}\left(-\gamma+\frac{U''(Z_c)}{U'(Z_c)^2}c\log(-c)\right)\right.\\
					&\qquad\quad \left.-\text{Im}\big(G^{(0)}(0)\big) \text{Im}\left(-\gamma+\frac{U''(Z_c)}{U'(Z_c)^2}c\log(-c)\right)\right\}+o(\alpha|c|)\\
					=&~-\frac{\alpha}{U'(Z_c)}+\frac{U''(Z_c)}{U'(Z_c)^3} \alpha c_r\log|c|+\mathcal{O}\big(\alpha (\alpha+c_r)\big),
			\end{aligned}
		\end{align}
	and
	\begin{align}\begin{aligned}\label{K4-0-re}
					\text{Im}\big(K_4(0)\big) =& \frac{2\alpha}{U'(Z_c)}\left\{\text{Re}\big(G^{(0)}(0)\big) \text{Im}\left(-\gamma+\frac{U''(Z_c)}{U'(Z_c)^2}c\log(-c)\right)\right.\\
					&\qquad\quad \left.+\text{Im}\big(G^{(0)}(0)\big) \text{Re}\left(-\gamma+\frac{U''(Z_c)}{U'(Z_c)^2}c\log(-c)\right)\right\}+o(\alpha|c|)\\
					%&\qquad\qquad+\left.\Re(\beta c)\Im\big(\varphi_{Ray}^{(0)}(0)\big)+\Im(\beta c)\Re\big(\varphi_{Ray}^{(0)}(0)\big)\right)\\
					%=&\frac{\alpha}{U'(Z_c)}\frac{U''(Z_c)}{U'(Z_c)^2} \big(c_r\arg(-c)+c_i\log|c|\big)\\ %&+\mathcal{O}(1)\alpha\Big(c_i+(c_r+\alpha)^2|\log c_i|+c_i(c_r+\alpha)\Big)\\
					=&~\frac{U''(Z_c)}{U'(Z_c)^3} \alpha\big(c_r\arg(-c)+c_i\log|c|\big)+o\big(\alpha (\alpha+c_r)\big).
			\end{aligned}\end{align}
		
	We now substitute \eqref{K1-0-im}, \eqref{K2-0-im}, \eqref{K3-0-im} \eqref{K4-0-im}, and \eqref{K4-0-re} into \eqref{est-phi1-0}, and obtain
			\begin{align}\label{phi1-0-im}\begin{aligned}
					\text{Re}\big(\varphi_1(0)\big) =& -\frac{\alpha}{U'(Z_c)}+\frac{U''(Z_c)}{U'(Z_c)^3} \alpha c_r\log|c|+\mathcal{O}\big(\alpha (\alpha+c_r)\big),\\
					\text{Im}\big(\varphi_1(0)\big) =&~ \frac{U''(Z_c)}{U'(Z_c)^3} \alpha\big(c_r\arg(-c)+c_i\log|c|\big)+o\big(\alpha (\alpha+c_r)\big).
			\end{aligned}\end{align}
		Thus, the bounds \eqref{r1} and \eqref{r2} follow from \eqref{phi2} and \eqref{phi1-0-im}.\\
		
	Finally, we establish the bound for $\partial_Z\varphi_1(0)$. Taking derivative in \eqref{def-phi1}, and evaluating the resultant equation at $Z=0$, we obtain
\begin{align}
	\partial_Z\varphi_1(0)=-2\alpha c^{-1}G^{(0)}(0)-U'(0)c^{-1}\varphi_1(0).\label{phid}
\end{align}
Then substituting \eqref{G-0-part} and \eqref{phi1-0-im} into \eqref{phid}, we deduce that
\begin{align}
	\begin{aligned}
		\text{Re}\big(\partial_Z\varphi_1(0)\big)&=-\frac{2\alpha}{|c|^2}\left(
		c_r\text{Re}\big(G^{(0)}(0)\big)+c_i\text{Im}\big(G^{(0)}(0)\big)
		\right)-\frac{U'(0)}{|c|^2}\left(
		c_r\text{Re}\big(\varphi_1(0)\big)+c_i\text{Im}\big(\varphi_1(0)\big)
		\right)\\
		&=\frac{\alpha c_r}{|c|^2}\left(-1+\frac{U'(0)}{U'(Z_c)}\right)-\frac{U'(0)U''(Z_c)\alpha c_r^2\log|c|}{U'(Z_c)^3|c|^2}+\mathcal{O}(\alpha+c_r)\\
		&=-\frac{U''(Z_c)}{U'(Z_c)^2}\alpha \log|c|+\mathcal{O}(\alpha+c_r),\\
		\text{Im}\big(\partial_Z\varphi_1(0)\big)&=-\frac{2\alpha}{|c|^2}\left(
		c_r\text{Im}\big(G^{(0)}(0)\big)-c_i\text{Re}\big(G^{(0)}(0)\big)
		\right)-\frac{U'(0)}{|c|^2}\left(
		c_r\text{Im}\big(\varphi_1(0)\big)-c_i\text{Re}\big(\varphi_1(0)\big)
		\right)\\
		&=\frac{\alpha c_r}{|c|^2}\left(1-\frac{U'(0)}{U'(Z_c)}\right)-\frac{U'(0)U''(Z_c)\alpha c_r^2\text{arg}(-c)}{U'(Z_c)^3|c|^2}+o(\alpha)\\
		&=-\frac{U''(Z_c)}{U'(Z_c)^2}\alpha \text{arg}(-c)+o(\alpha).\label{phid0}
	\end{aligned}
\end{align}
Thus, the bounds \eqref{r3} and \eqref{r4} follow from \eqref{phi2} and \eqref{phid0}. The proof of Lemma \ref{lem:ray-R-asy} is then complete.
	\end{proof}

\subsection{Inviscid instability}
We first present results concerning the homogeneous Rayleigh solution $\varphi_{Ray}$.
	\begin{proposition}\label{prop:ray-homo}
		Let $(\alpha,c)\in \mathbb{H}_1$. There exists a solution $\varphi_{Ray}\in {\mathcal{Y}}_\alpha$ to \eqref{eq:Ray-homo} satisfying
		\begin{align}
			\|\varphi_{Ray}-(U_\infty-c)^{-2}(U-c)e^{-\alpha Z}\|_{\mathcal{Y}_{\eta_0}}\leq C\alpha|\log c_i|^2.\label{bd}
		\end{align}
		If $c_i\ll \min\{\alpha,c_r\}$, then it holds that %we have
		\begin{align}
			& \mathrm{Re}(\varphi_{Ray}(0))=-\frac{c_r}{U_\infty^2}+\frac{\alpha}{U'(0)}+\mathcal{O}(\alpha|c||\log c|),\label{R1}\\ & \mathrm{Im}(\varphi_{Ray}(0))=-\frac{c_i}{U_\infty^2} -\frac{U''(0)}{U'(0)^3}\alpha (-\pi c_r+c_i\log|c|)+o(\alpha(\alpha+c_r)),\label{R2}\\ & \mathrm{Re}(\partial_Z\varphi_{Ray}(0))=\frac{U'(0)}{U_\infty^2}+\mathcal{O}(\alpha|\log c|),\label{R3}
			\\ & 
			 \mathrm{Im}(\partial_Z\varphi_{Ray}(0))=-\frac{ \pi U''(0)}{U'(0)^2}\alpha+o(\alpha+c_r).\label{R4}
		\end{align}
Moreover, $\varphi_{Ray}(0;c)$ is analytic in $c$, and satisfies
	\begin{align}\label{phicd}
		\partial_c\varphi_{Ray}(0;c)=\frac{-1}{U_{\infty}^2}+O(1)(\alpha|\log c_i|+|c|).
	\end{align}
	\end{proposition}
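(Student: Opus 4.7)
My plan is to simply assemble the pieces already established in the preceding subsections. The solution is defined by
\begin{align*}
\varphi_{Ray}(Z) \triangleq \varphi_{Ray}^{(0)}(Z) + \varphi_R(Z) = \varphi_{Ray}^{(0)}(Z) + \varphi_1(Z) + \varphi_2(Z),
\end{align*}
where $\varphi_{Ray}^{(0)}$ is the approximate profile constructed in \eqref{eq:ray-main-def} and $\varphi_R=\varphi_1+\varphi_2$ is the remainder from \eqref{def-phi-R}--\eqref{def-phi1}. By construction, $\mathrm{Ray}[\varphi_{Ray}^{(0)}]=2\alpha U'\varphi_{Ray}^{(0)}$ and $\mathrm{Ray}[\varphi_R]=-2\alpha U'\varphi_{Ray}^{(0)}$, so $\mathrm{Ray}[\varphi_{Ray}]=0$. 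The decay at infinity is clear from the $\mathcal{Y}_{\eta_0}$ membership of each piece, and the membership $\varphi_{Ray}\in\mathcal{Y}_\alpha$ follows since $\eta_0 > \alpha$ in the regime \eqref{def-H1}.

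Next, I would verify \eqref{bd} by writing
\begin{align*}
\varphi_{Ray}-(U_\infty-c)^{-2}(U-c)e^{-\alpha Z} = \bigl(\varphi_{Ray}^{(0)}-(U_\infty-c)^{-2}(U-c)e^{-\alpha Z}\bigr)+\varphi_1+\varphi_2,
\end{align*}
and applying \eqref{eq-lem-ray-sub-profile} together with \eqref{vphi1}--\eqref{vphi2} from Lemma~\ref{lem:ray-R}. The resulting bound is $C\alpha|\log c_i|^2$, absorbing the $\alpha|\log c_i|$ contribution from $\varphi_1$. The asymptotic formulas \eqref{R1}--\eqref{R4} are obtained by summing the expansions of $\varphi_{Ray}^{(0)}(0)$ and $\partial_Z\varphi_{Ray}^{(0)}(0)$ from Lemma~\ref{lem:ray-main-asy} with those of $\varphi_R(0)$ and $\partial_Z\varphi_R(0)$ from Lemma~\ref{lem:ray-R-asy}, then replacing $U'(Z_c), U''(Z_c)$ by $U'(0), U''(0)$ using $|U'(Z_c)-U'(0)|+|U''(Z_c)-U''(0)|\lesssim Z_c\sim c_r$, which is absorbed into the stated error terms. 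The delicate point is that the $O(\alpha)$ and $O(\alpha c_r\log|c|)$ terms in $\mathrm{Re}$ (and the $O(\alpha c_r\arg(-c))$ terms in $\mathrm{Im}$) coming from $\varphi_{Ray}^{(0)}(0)$ partially cancel with those from $\varphi_R(0)$, leaving exactly one copy. After cancellation, one uses $\arg(-c)=-\pi+\arctan(c_i/c_r)=-\pi+o(1)$ in the regime $c_i\ll c_r$ to convert the $\arg(-c)$ term in the imaginary parts into the $-\pi c_r$ and $-\pi$ leading contributions appearing in \eqref{R2} and \eqref{R4}.

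Finally, the analyticity of $\varphi_{Ray}(0;c)$ in $c$ follows from the explicit integral representation \eqref{eq:ray-main-def} of $\varphi_{Ray}^{(0)}$ and the fact that $\varphi_R$ is defined through the integral operator on the right-hand side of \eqref{bc1.1}, whose kernel depends holomorphically on $c$ in the upper half-plane; the fixed-point construction in Proposition~\ref{prop:ray-non2} preserves analyticity. The derivative bound \eqref{phicd} then follows by combining \eqref{dphic} from Lemma~\ref{lmphic} with the pointwise bound $|\partial_c\varphi_R(0)|\le \|\partial_c\varphi_R\|_{\tilde{\mathcal{Y}}_{\eta_0}}\le C\alpha|\log c_i|$ from Lemma~\ref{lmphir}, which is absorbed into the error term. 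The main technical obstacle I anticipate is not any new estimate but rather the careful bookkeeping of the cancellations in the second-order expansions, together with keeping track of which error terms are $\mathcal{O}$ versus $o$ of $\alpha(\alpha+c_r)$; this is exactly what gives the sharp imaginary part $-\pi U''(0)\alpha c_r/U'(0)^3$ needed in Step~1 to detect the inviscid unstable eigenvalue.
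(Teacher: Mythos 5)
Your proof is correct and follows essentially the same route as the paper: take $\varphi_{Ray}=\varphi_{Ray}^{(0)}+\varphi_1+\varphi_2$, derive \eqref{bd} from \eqref{eq-lem-ray-sub-profile} and Lemma~\ref{lem:ray-R}, sum the expansions of Lemmas~\ref{lem:ray-main-asy} and~\ref{lem:ray-R-asy} and then use $Z_c\sim c_r$ and $\arg(-c)=-\pi+o(1)$ to arrive at \eqref{R1}--\eqref{R4}, and get \eqref{phicd} from Lemmas~\ref{lmphic} and~\ref{lmphir}. Your added bookkeeping of the partial cancellations (the coefficients $2\to 1$ for the $\alpha/U'(Z_c)$ term and $-2\to -1$ for the $\alpha c_r\log|c|$ term) is accurate and fills in details the paper leaves implicit.
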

\begin{proof}
	Recall that $\varphi_{Ray}=\varphi_{Ray}^{(0)}+\varphi_1+\varphi_2$. Then the bound \eqref{bd} follows from \eqref{eq-lem-ray-sub-profile}, \eqref{vphi1} and \eqref{vphi2}. If $c_i\ll c_r$, we obtain $Z_c\sim c_r\sim |c|$, and
	$$\arg(-c)=-\pi+o(1).
	$$
	Consequently, the asymptotic expansions \eqref{R1}-\eqref{R4} follow from \eqref{Re1}-\eqref{Im2} in Lemma \ref{lem:ray-main-asy}, and \eqref{r1}-\eqref{r4} in Lemma \ref{lem:ray-R-asy}. Finally, the bound \eqref{phicd} follows from \eqref{dphic} and \eqref{phir}. The proof of Proposition \ref{prop:ray-homo} is complete.
\end{proof}
By using the asymptotic expansions of $\varphi_{Ray}(0;c)$ established in Proposition \ref{prop:ray-homo}, we obtain the following result.
\begin{theorem}[Inviscid instability]\label{iv}
	For any $\alpha\ll1$, there exists a complex number $c_{Ray}\in \mathbb{C}$ such that
	\begin{align}
		\varphi_{Ray}(0;c)|_{c=c_{Ray}}=0.\nonumber
	\end{align}
Moreover, $c_{Ray}$ admits the following asymptotic expansion
\begin{align}
	\mathrm{Re}(c_{Ray})=\frac{\alpha U_{\infty}^2}{U'(0)}+O(\alpha^2 |\log \alpha|),~~~\mathrm{Im}(c_{Ray})=\frac{\alpha^2U''(0)U_{\infty}^4\pi}{U'(0)^4}+o(\alpha^2).\label{eigen}
\end{align}
\end{theorem}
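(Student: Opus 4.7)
The plan is to find a zero of the analytic function $F(c) \triangleq \varphi_{Ray}(0;c)$ by applying a Newton / Rouché argument centered at the explicit approximate root
\begin{align*}
c_0 \triangleq \frac{\alpha U_\infty^2}{U'(0)} + \mathrm{i}\,\frac{\pi U''(0) U_\infty^4}{U'(0)^4}\alpha^2.
\end{align*}
The structural conditions \eqref{SC} give $U'(0), U''(0) > 0$, so $c_{0,r}, c_{0,i} > 0$; moreover $c_{0,r}\sim \alpha$, $c_{0,i}\sim \alpha^2$, hence for $\alpha\ll 1$ one has $c_{0,i}\ll \min\{\alpha, c_{0,r}\}$ and $\alpha|\log c_{0,i}|\sim \alpha|\log\alpha|\ll 1$. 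Thus $(\alpha, c_0)\in \mathbb{H}_1$ and the second-order expansions \eqref{R1}--\eqref{R4} are available at $c_0$, and by continuity on a small disk around it.

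Next I would evaluate $F(c_0)$ using \eqref{R1} and \eqref{R2}. For the real part, the leading contribution $-c_{0,r}/U_\infty^2$ exactly cancels against $\alpha/U'(0)$, leaving a remainder of size $O(\alpha|c_0||\log c_0|) = O(\alpha^2|\log\alpha|)$. For the imaginary part, the leading $-c_{0,i}/U_\infty^2$ is precisely designed to cancel the term $\pi U''(0)\alpha c_{0,r}/U'(0)^3$, while the residual $c_{0,i}\log|c_0|$ contribution is of size $\alpha \cdot \alpha^2|\log\alpha| = o(\alpha^2)$, and the remainder bound in \eqref{R2} is also $o(\alpha^2)$. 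In summary,
\begin{align*}
F(c_0) = O(\alpha^2|\log\alpha|) + \mathrm{i}\, o(\alpha^2).
\end{align*}

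Since $F$ is analytic in $c$ on $\mathbb{H}_1$ (Proposition \ref{prop:ray-homo}) with
\begin{align*}
\partial_c F(c) = -\frac{1}{U_\infty^2} + O(\alpha|\log c_i| + |c|),
\end{align*}
the derivative is bounded away from zero on a disk $D_r(c_0)$ with $r = \kappa \alpha^2|\log\alpha|$ for a suitable large constant $\kappa$, provided $\kappa\alpha^2|\log\alpha|\ll c_{0,i}$ — which is compatible since one only needs to stay in the regime where $c_i$ has the same order as $c_{0,i}$. I would then apply Rouché's theorem (or equivalently a contraction-mapping Newton scheme) to the linearization
\begin{align*}
F(c) = F(c_0) + \partial_c F(c_0)(c - c_0) + O(|c-c_0|^2)
\end{align*}
on $\partial D_r(c_0)$: on this circle $|\partial_c F(c_0)(c-c_0)| \sim r/U_\infty^2 \gg |F(c_0)|$ once $\kappa$ is large enough, yielding a unique root $c_{Ray}\in D_r(c_0)$. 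Back-solving gives
\begin{align*}
c_{Ray} - c_0 = -\frac{F(c_0)}{\partial_c F(c_0)} + O(r^2),
\end{align*}
so $\mathrm{Re}(c_{Ray}) = c_{0,r} + O(\alpha^2|\log\alpha|)$ and $\mathrm{Im}(c_{Ray}) = c_{0,i} + o(\alpha^2)$, matching \eqref{eigen}.

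The main obstacle is the bookkeeping that guarantees the cancellations in $F(c_0)$ to order $o(\alpha^2)$ in the imaginary part; this requires keeping track of both the $-c_r\pi$ piece coming from $\arg(-c) = -\pi + o(1)$ and the $c_i\log|c|$ piece in \eqref{R2}, and checking that each subleading error from \eqref{R1}--\eqref{R4} is indeed of order $\alpha \cdot r$ or better on the Rouché circle so that it can be absorbed in the $O(|c-c_0|^2)$ correction. Once these size comparisons are verified — which is essentially immediate from the already established expansions and the derivative bound \eqref{phicd} — the theorem follows.
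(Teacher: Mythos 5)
Your blueprint is fundamentally the paper's: the same approximate eigenvalue $c_{app}=\frac{\alpha U_\infty^2}{U'(0)}+\mathrm{i}\frac{\pi U''(0)U_\infty^4}{U'(0)^4}\alpha^2$, the same cancellations from the expansions \eqref{R1}--\eqref{R2}, the same derivative bound \eqref{phicd}, and the same root-finding idea. However, the literal Rouch\'e argument on the disk $D_r(c_0)$ with $r=\kappa\alpha^2|\log\alpha|$ has a geometric obstruction you notice but then incorrectly dismiss. On the one hand, Rouch\'e dominance requires $r\gg |F(c_0)|\sim\alpha^2|\log\alpha|$, i.e.\ $\kappa$ large; on the other hand, staying in the upper half-plane (where $\varphi_{Ray}(0;c)$ is defined and the expansions hold) requires $r<c_{0,i}\sim\alpha^2$, i.e.\ $\kappa|\log\alpha|\lesssim 1$. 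These two requirements are incompatible as $\alpha\to 0$, because $|\log\alpha|\to\infty$. Your own caveat ``provided $\kappa\alpha^2|\log\alpha|\ll c_{0,i}$ --- which is compatible'' is therefore not compatible, and a disk large enough to trap the root necessarily crosses $c_i=0$.

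The fix is exactly what the paper does and what you offer as an aside: rewrite $\varphi_{Ray}(0;c)=\frac{-c+c_{app}}{U_\infty^2}+R(\alpha,c)$ and iterate $c^{(j+1)}=c_{app}+U_\infty^2 R(\alpha,c^{(j)})$ starting from $c^{(0)}=c_{app}$. The point of the fixed-point scheme, which the Rouch\'e-on-a-disk formulation obscures, is that the bounds on $R$ are \emph{anisotropic}: $\mathrm{Re}(R)=O(\alpha^2|\log\alpha|)$ is allowed to be large compared to $c_{0,i}$, but $\mathrm{Im}(R)=o(\alpha^2)$ is not, so each iterate satisfies $\mathrm{Im}(c^{(j)})=c_{0,i}+o(\alpha^2)>0$ and stays safely inside the admissible regime, while $|\partial_c R|\leq C\alpha|\log\alpha|\ll 1$ from \eqref{phicd} gives the contraction. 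Your back-solving estimate $c_{Ray}-c_0=-F(c_0)/\partial_c F(c_0)+\cdots$ and the resulting expansions for $\mathrm{Re}(c_{Ray})$ and $\mathrm{Im}(c_{Ray})$ are correct once the root's existence is secured this way; promote the contraction-mapping version from parenthetical to the main argument and the proof is complete.
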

\begin{proof}
	We seek a root $c=c_r+\mathrm{i}c_i$ of the equation $\varphi_{Ray}(0;c)=0$ in the regime 
	\begin{align}
		c_r\sim \alpha,~~c_i\sim \alpha^2.\nonumber
	\end{align}
Set the approximate eigenvalue
\begin{align}
	c_{app}=\frac{\alpha U_{\infty}^2}{U'(0)}+\mathrm{i}\frac{\alpha^2U''(0)U_{\infty}^4\pi}{U'(0)^4}.\nonumber
\end{align}
From \eqref{R1} and \eqref{R2}, we obtain that
\begin{align}\label{phiray}
	\varphi_{Ray}(0;c)=\frac{-c+c_{app}}{U_{\infty}^2}+R(\alpha,c),
\end{align}
where $R(\alpha,c)$ is an analytic function in $c$, and satisfies
\begin{align}
	\mathrm{Re}\left(R(\alpha,c)\right)=O(\alpha^2|\log\alpha|),~~~\mathrm{Im}\left(R(\alpha,c)\right)=o(\alpha^2).\label{rd}
\end{align}
Moreover, by comparing \eqref{phiray} with \eqref{phicd}, we find that
\begin{align}
|\partial_c R(\alpha,c)|\leq C\alpha |\log c_i|\leq C\alpha |\log \alpha|.\label{rc}
\end{align}

Now we solve $c_{Ray}$ via the following iteration
$$c^{(j+1)}=c_{app}+U_{\infty}^2R(\alpha,c^{(j)}),~~c^{(0)}=c_{app}.
$$
From \eqref{rc}, we obtain
$$|c^{(j+1)}-c^{(j)}|=U_{\infty}^2|R(\alpha,c^{(j)})-R(\alpha,c^{(j-1)})|\leq C\alpha|\log \alpha||c^{(j)}-c^{(j-1)}|.
$$
Thus, for $0<\alpha\ll 1$, $\{c^{(j)}\}_{j\geq 0}$ is a Cauchy sequence. Set $c_{Ray}=\lim_{j\rightarrow\infty}c^{(j)}$, and it is straightforward to show that $c_{Ray}$ is a solution of the equation $\varphi_{Ray}(0;c)=0$. Moreover, substituting \eqref{rd} into \eqref{phiray}, we get the asymptotic expansion \eqref{eigen}. The proof of Theorem \ref{iv} is complete.
\end{proof}

%%%%%%%%%%%%%%%%%%%%%%%%%%%%%%%%%%%%%%%%%%%%%%%%%%

%%%%%%%%%%%%%%%%%%%%%%%%%%%%%%%%%%%%%%%%%%%%%%%%%%

%%%%%%%%%%%%%%%%%%%%%%%%%%%%%%%%%%%
\section{The Orr-Sommerfeld equation}\label{sec-OS}
In this section, we study the following Orr-Sommerfeld equation 
\begin{align}\label{eq:OSeq}
		-\varepsilon (\partial_Z^2-\alpha^2)^2\phi +(U-c)(\partial_Z^2-\alpha^2)\phi-U''\phi=0,
\end{align}
where $\varepsilon=\frac{\sqrt{\nu}}{i\alpha}.$ We construct a homogeneous Orr-Sommerfeld solution $\phi_s$ near the Rayleigh solution $\varphi_{Ray}$, and a viscous sublayer $\phi_f$ to match the full boundary conditions in \eqref{eq:OS-w}. These solutions are constructed with parameters $(\alpha,c)\in \mathbb{H}_2$ where
\begin{align*}
	\mathbb{H}_2&=\{(\alpha,c)\in \mathbb{R}_{+}\times\mathbb{C}\mid \alpha, c_r, c_i\sim O(1),~ c_i\ll c_r\sim \alpha\ll1,~~ \alpha|\log c_i| \ll 1\}.
\end{align*} 

\subsection{The Airy equation}\label{subsection-energy}

In this subsection, we solve the following Airy equation: %with boundary conditions
\begin{align}\label{eq:Airy-eq-energy}
	\left\{\begin{aligned}
		&\mathrm{Airy}[\psi]=-\varepsilon(\partial_Z^2-\alpha^2)\psi+(U-c)\psi =F,\quad Z>0,\\ &\psi(0)=\psi(\infty)=0.
	\end{aligned}
	\right.	
\end{align}
\begin{lemma}\label{prop-airy-energy}
	Let $0<|\varepsilon|\ll 1$ and $(\alpha,c)\in \mathbb{H}_2$. For any $F\in L^2(\mathbb{R}_+)$, there exists a unique solution $\psi$ to \eqref{eq:Airy-eq-energy}, and it satisfies
	\begin{align}\label{est-L2-psi-energy}
			c_i\|\psi\|_{L^2}+|\varepsilon|^{\frac12} c_i^{\frac{1}{2}}\|(\partial_Z\psi,\alpha\psi)\|_{L^2}+|\varepsilon|c_i^{\frac12}\|(\partial_Z^2-\alpha^2)\psi\|_{L^2}\leq C\|F\|_{L^2}.
		\end{align}
\end{lemma}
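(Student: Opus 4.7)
My plan is to prove the lemma via a single unified argument that exploits the sign structure of the Airy operator: since $\varepsilon = \sqrt{\nu}/(\mathrm{i}\alpha)$ is purely imaginary and $\mathrm{Im}(U-c) = -c_i < 0$, the imaginary part of the associated sesquilinear form has a fixed sign, which simultaneously provides coercivity for a Lax--Milgram argument and the desired $L^2$ a priori bounds.

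For the first two terms of the estimate, I would test the equation against $\bar\psi$, integrate by parts (boundary terms vanish by $\psi(0)=\psi(\infty)=0$), and take imaginary parts to obtain
\begin{equation*}
|\varepsilon|\bigl(\|\partial_Z\psi\|_{L^2}^2 + \alpha^2\|\psi\|_{L^2}^2\bigr) + c_i\|\psi\|_{L^2}^2 = -\mathrm{Im}\int_0^\infty F\bar\psi\,dZ.
\end{equation*}
The weighted Young inequality $\|F\|\|\psi\|\leq \frac{1}{2c_i}\|F\|^2 + \frac{c_i}{2}\|\psi\|^2$, absorbing the last term into the left-hand side, yields both $c_i\|\psi\|_{L^2}\leq \|F\|_{L^2}$ and $|\varepsilon|^{1/2}c_i^{1/2}\|(\partial_Z\psi,\alpha\psi)\|_{L^2}\leq \|F\|_{L^2}$. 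Existence and uniqueness then follow from the Lax--Milgram theorem applied to the sesquilinear form
\begin{equation*}
a(\psi,\phi) = \varepsilon\langle\partial_Z\psi,\partial_Z\phi\rangle + \varepsilon\alpha^2\langle\psi,\phi\rangle + \langle(U-c)\psi,\phi\rangle
\end{equation*}
on $H_0^1(\mathbb{R}_+)$, the coercivity in the norm $|\varepsilon|^{1/2}\|\nabla\cdot\|_{L^2}+c_i^{1/2}\|\cdot\|_{L^2}$ being precisely the identity above.

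The remaining bound on $(\partial_Z^2-\alpha^2)\psi$ is the main obstacle. Rewriting the equation as $|\varepsilon|(\partial_Z^2-\alpha^2)\psi = (U-c)\psi - F$ and using only $\|U-c\|_{L^\infty}\leq C$ together with $\|\psi\|_{L^2}\leq c_i^{-1}\|F\|_{L^2}$ yields only the weaker estimate with weight $c_i$ rather than $c_i^{1/2}$. To sharpen the weight I would test the equation against the multiplier $(U-c_r)\bar\psi$ and take the real part; since $\mathrm{Re}\bigl[(U-c)(U-c_r)\bigr]=(U-c_r)^2$, the zero-order term isolates $\|(U-c_r)\psi\|_{L^2}^2$ on the left-hand side, while integration by parts of the viscous contribution produces only a cross term of the form $\varepsilon\int U'\partial_Z\psi\bar\psi\,dZ$ (the remaining integrals being real and therefore not contributing to $\mathrm{Re}$). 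The step-one bounds control this cross term in the regime $|\varepsilon|\lesssim c_i$ implicit in the hypotheses $|\varepsilon|\ll 1$ and $(\alpha,c)\in\mathbb{H}_2$, leading to $\|(U-c)\psi\|_{L^2}\leq C\|F\|_{L^2}$ via the decomposition $\|(U-c)\psi\|_{L^2}^2 = \|(U-c_r)\psi\|_{L^2}^2 + c_i^2\|\psi\|_{L^2}^2$. Returning to the equation then produces the desired $|\varepsilon|c_i^{1/2}\|(\partial_Z^2-\alpha^2)\psi\|_{L^2}\leq C\|F\|_{L^2}$, completing the proof.
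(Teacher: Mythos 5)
Your first step — testing against $\bar\psi$, integrating by parts, and taking imaginary parts to produce $|\varepsilon|\|(\partial_Z\psi,\alpha\psi)\|_{L^2}^2 + c_i\|\psi\|_{L^2}^2 \leq \|F\|_{L^2}\|\psi\|_{L^2}$ — coincides exactly with the paper's. For the second-order estimate the routes diverge genuinely. The paper squares the equation, i.e.\ expands $\|F\|_{L^2}^2 = |\varepsilon|^2\|(\partial_Z^2-\alpha^2)\psi\|_{L^2}^2 + \|(U-c)\psi\|_{L^2}^2 - 2\mathrm{Re}\big(\varepsilon\int(\partial_Z^2-\alpha^2)\psi\,(U-\bar c)\bar\psi\big)$, then controls the cross term after integrating by parts using the step-one bounds, paying a factor $c_i^{-1}$ and landing on exactly $|\varepsilon|c_i^{1/2}\|(\partial_Z^2-\alpha^2)\psi\|_{L^2}\leq C\|F\|_{L^2}$. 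You instead pair with the multiplier $(U-c_r)\bar\psi$ and take real parts; since $\varepsilon$ is purely imaginary, the only real contribution of the viscous term after integration by parts is the commutator $\varepsilon\int U'\partial_Z\psi\,\bar\psi$, so the identity isolates $\|(U-c_r)\psi\|_{L^2}^2$ directly and yields $\|(U-c)\psi\|_{L^2}\leq C\|F\|_{L^2}$, hence the sharper bound $|\varepsilon|\|(\partial_Z^2-\alpha^2)\psi\|_{L^2}\leq C\|F\|_{L^2}$ (which implies the stated one since $c_i\ll1$). Both arguments are correct; yours is arguably cleaner and strictly stronger.

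One small inaccuracy to fix: you assert that absorbing the cross term needs the regime $|\varepsilon|\lesssim c_i$. Tracking the step-one bounds gives $|\varepsilon|\|\partial_Z\psi\|_{L^2}\|\psi\|_{L^2}\leq |\varepsilon|^{1/2}c_i^{-3/2}\|F\|_{L^2}^2$, so what is really required is $|\varepsilon|\lesssim c_i^{3}$. This is still automatic under the hypotheses, because $c_i$ is a $\nu$-independent parameter in $\mathbb{H}_2$ while $|\varepsilon|=\sqrt{\nu}/\alpha\to 0$ — indeed the later Rayleigh–Airy iteration requires an even stronger separation — but you should state the correct power. The Lax–Milgram remark for existence is fine (the paper merely calls existence standard), though note that the coercivity you need is $|a(\psi,\psi)|\geq |\mathrm{Im}\,a(\psi,\psi)| = |\varepsilon|\|(\partial_Z\psi,\alpha\psi)\|_{L^2}^2+c_i\|\psi\|_{L^2}^2$, i.e.\ the lower bound comes from the imaginary part, not from a sign of the real part.
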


\begin{proof}
	The existence is standard. We only focus on the a priori estimates \eqref{est-L2-psi-energy}. Taking the inner product of \eqref{eq:Airy-eq-energy} 
	with $\psi$ leads to
	\begin{align*}
		\left\langle F, {\psi} \right\rangle&=\left\langle -\varepsilon(\partial_Z^2-\alpha^2)\psi+(U-c)\psi, {\psi} \right\rangle\\ &=\varepsilon\|(\partial_Z\psi,\alpha\psi)\|^2_{L^2}+\int_0^{+\infty}(U-c_r)|\psi|^2dZ-\mathrm{i}c_i\|\psi\|^2_{L^2}\\ & =-\mathrm{i}|\varepsilon|\|(\partial_Z\psi,\alpha\psi)\|^2_{L^2}-\mathrm{i}c_i\|\psi\|^2_{L^2}+\int_0^{+\infty}(U-c_r)|\psi|^2dZ.
	\end{align*}
	The imaginary part of the above equality yields
	\begin{align*}
		|\varepsilon|\|(\partial_Z\psi,\alpha\psi)\|^2_{L^2}+c_i\|\psi\|^2_{L^2}\leq \left|\left\langle F, {\psi} \right\rangle\right|\leq C\|F\|_{L^2}\|\psi\|_{L^2},
	\end{align*}
	which implies that
	\begin{align}\label{est-psi-psiZalphapsi-energy}
		c_i\|\psi\|_{L^2}\leq C\|F\|_{L^2},\qquad |\varepsilon|^{\frac{1}{2}}c_i^{\frac12}\|(\partial_Z\psi,\alpha\psi)\|_{L^2}\leq C\|F\|_{L^2}.
	\end{align}
	Taking the $L^2$ norm on both sides of the first equation in \eqref{eq:Airy-eq-energy}, we obtain 
	\begin{align}\label{eq:F-inner-product}
		|\varepsilon|^2\|(\partial_Z^2-\alpha^2)\psi\|_{L^2}^2+\|(U-c)\psi\|_{L^2}^2-2\mathrm{Re}\left(\int_0^{+\infty}\varepsilon(\partial_Z^2-\alpha^2)\psi(U-\bar{c})\bar{\psi}dZ\right)=\|F\|^2_{L^2}.
	\end{align}
	By integrating by parts, we deduce that
	\begin{align*}
		\left|2\mathrm{Re}\left(\int_0^{+\infty}\varepsilon(\partial_Z^2-\alpha^2)\psi(U-\bar{c})\bar{\psi}dZ\right)\right|
		&%\leq C
		{=2}|\varepsilon|\left|\mathrm{Im}\left(\int_0^{+\infty}(\partial_Z^2-\alpha^2)\psi(U-\bar{c})\bar{\psi}dZ\right)\right|\\ &\leq C|\varepsilon|\left(\|\partial_Z\psi\|_{L^2}\|\psi\|_{L^2}+\|\partial_Z\psi\|_{L^2}^2+\|\alpha\psi\|^2_{L^2}\right)\\ &\leq Cc_i^{-1}\|F\|_{L^2}^2,
	\end{align*}
	where we have used \eqref{est-psi-psiZalphapsi-energy} in the last inequality. Substituting it into \eqref{eq:F-inner-product} yields that
	\begin{align*}
		|\varepsilon|c_i^{\frac12}\|(\partial_Z^2-\alpha^2)\psi\|_{L^2}\leq C\|F\|_{L^2}.
	\end{align*}
Thus, the proof of Lemma \ref{prop-airy-energy} is complete.
	
\end{proof}

%%%%%%%%%%%%%%%%%%%%%%%%%%%%%%%%%%%%%%%%
\subsection{Rayleigh-Airy itertion}
In this subsection, we solve the following non-homogeneous Orr-Sommerfeld equation 
\begin{align}\label{eq:non-OS}
    \mathrm{OS}[\phi]=-\varepsilon (\partial_Z^2-\alpha^2)^2\phi +(U-c)(\partial_Z^2-\alpha^2)\phi-U''\phi=F, \qquad \phi(\infty)=0,
\end{align}
with a general source $F$. The solution is constructed by the celebrated Rayleigh-Airy iteration, which has been introduced in \cite{GGN-DMJ, GMM-duke}. In our problem the convergence of this iteration must be justified in the regime $\alpha,c\sim O(1)$, and in the presence of inflection points within the profile. To this end, we employ the $L^\infty$ and $L^2$ framework to solve the Rayleigh and Airy equation respectively.

Recall the operators 
\begin{align*}
	\mathrm{OS}[~\cdot~]&= -\varepsilon (\partial_Z^2-\alpha^2)^2 +(U-c)(\partial_Z^2-\alpha^2)-U'',\\
	\mathrm{Ray}[~\cdot~]&=(U-c)(\partial_Z^2-\alpha^2)-U'',\\
	\mathrm{Airy}[~\cdot~]&=-\varepsilon (\partial_Z^2-\alpha^2)+(U-c).
\end{align*}
It is straightforward to check that 
\begin{align*}
	\mathrm{OS}[\phi]&= (\partial_Z^2-\alpha^2)\mathrm{Airy}[\phi]-2\partial_Z(\phi U')\\
	&=\mathrm{Ray}[\phi]-\varepsilon (\partial_Z^2-\alpha^2)^2\phi.
\end{align*}
We then solve \eqref{eq:non-OS} by the iteration, beginning with the Rayleigh solution $\varphi^{(0)}$, which satisfies
\begin{align}\label{eq:iter1-1}
    \mathrm{Ray}[\varphi^{(0)}]=F.
\end{align}
For $j\geq 1$, we inductively construct $\varphi^{(j)}$ and $\psi^{(j)}$  as follows:
\begin{align}\label{eq:iter2-1}
	\mathrm{Airy}[\psi^{(j)}] &=\varepsilon (\partial_Z^2-\alpha^2)\varphi^{(j-1)},
\end{align}
and
\begin{align} \label{eq:iter2-2} 
\mathrm{Ray}[\varphi^{(j)}]=2\partial_Z(U'\psi^{(j)}).
\end{align}
Then the $N$-th order approximate solution $\phi^{(N)}$ is defined by
\begin{align*}
    \phi^{(N)} \triangleq\varphi^{(N)}+\psi^{(N)}.
\end{align*}
Straightforward computation yields
\begin{align*}
    \mathrm{OS}\Big[\varphi^{(0)}+\sum_{j=1}^N \phi^{(j)}\Big]=F-\varepsilon(\partial_Z^2-\alpha^2)\varphi^{(N)}.
\end{align*}
Therefore, at this point, the series $\displaystyle\phi=\varphi^{(0)}+\sum_{j=1}^\infty \phi^{(N)}$ formally gives a solution to \eqref{eq:non-OS}. 

Now we prove rigorously the convergence of the above series. Fix $\theta\in (0,\eta_0)$, where $\eta_0$ is given in \eqref{eq:Hyper-u-v}. As $\psi^{(j)}$ is a solution to the Airy equation \eqref{eq:iter2-1}, we apply the estimate \eqref{est-L2-psi-energy} to obtain 
\begin{align}\label{psi1}
	c_i\|\psi^{(j)}\|_{L^2}+|\varepsilon|^{\frac12}c_i^{\frac12}\|\partial_Z\psi^{(j)}\|_{L^2}+|\varepsilon|c_i^{\frac12}\|(\partial_Z^2-\alpha^2)\psi^{(j)}\|_{L^2}&\leq C|\varepsilon|\|(\partial_Z^2-\alpha^2)\varphi^{(j-1)}\|_{L^2}\nonumber\\
	&\leq C|\varepsilon|\|(\partial_Z^2-\alpha^2)\varphi^{(j-1)}\|_{L^\infty_{\theta}},
\end{align}
by the virtue of embedding $L^\infty_{\theta}(\mathbb{R}_+)\hookrightarrow L^2(\mathbb{R}_+)$. Then using \eqref{psi1},  we obtain
$$\|\partial_Z(U'\psi^{(j)})\|_{L^\infty_{\theta}}\leq C\|\psi^{(j)}\|_{W^{1,\infty}}\leq C|\varepsilon|^{\frac14}c_i^{-\frac12}
\|(\partial_Z^2-\alpha^2)\varphi^{(j-1)}\|_{L^\infty_{\theta}}.$$
On the other hand, since $\varphi^{(j)}$ solves the Rayleigh equation \eqref{eq:iter2-2}, we apply the bound \eqref{eq-Ray-nonhom-tilde-Y} to $\varphi^{(j)}$ with $F=2\partial_Z(U'\psi^{(j)})$, and get
\begin{align}
	\|\varphi^{(j)}\|_{L^\infty_{\theta}}+\|\partial_Z\varphi^{(j)}\|_{L^\infty_{\theta}}&\leq C|\log c_i|\|\partial_Z(U'\psi^{(j)})\|_{L^\infty_{\theta}}\nonumber\\
	&\leq C|\varepsilon|^{\frac14}c_i^{-\frac12}|\log c_i|\|(\partial_Z^2-\alpha^2)\varphi^{(j-1)}\|_{L^\infty_{\theta}},\label{psi2}
\end{align}
and
\begin{align}	
	\|(\partial_Z^2-\alpha^2)\varphi^{(j)}\|_{L^\infty_{\theta}}&\leq \frac{C}{c_i}\|(U-c)(\partial_Z^2-\alpha^2)\varphi^{(j)}\|_{L^\infty_{\theta}}\nonumber\\
	&\leq C|\varepsilon|^{\frac14}c_i^{-\frac32}|\log c_i| \|(\partial_Z^2-\alpha^2)\varphi^{(j-1)}\|_{L^\infty_{\theta}}\label{psi3}.
\end{align}
Since $\alpha, c_i\sim O(1)$, then for sufficiently small $0<\nu\ll 1$, the factor
$$
 C|\varepsilon|^{\frac14} c_i^{-\frac32}|\log c_i|=C\nu^{\frac18}|\log c_i|c_i^{-\frac32}\alpha^{-\frac14}\ll 1.
$$
Thus, \eqref{psi3} implies
\begin{align}
	\sum_{j=0}^\infty \|(\partial_Z^2-\alpha^2)\varphi^{(j)}\|_{L^\infty_{\theta}}\leq C\|(\partial_Z^2-\alpha^2)\varphi^{(0)}\|_{L^\infty_{\theta}}\leq \frac{C|\log c_i|}{c_i}\|F\|_{L^\infty_{\theta}}.\label{psi4}
\end{align}
Substituting \eqref{psi4} into \eqref{psi1} and \eqref{psi2} respectively, we deduce that
$$
\begin{aligned}
	\sum_{j=1}^\infty\|\psi^{(j)}\|_{L^2}+\|\partial_Z\psi^{(j)}\|_{L^2}+\|(\partial_Z^2-\alpha^2)\psi^{(j)}\|_{L^2}&\leq Cc_i^{-\frac12}\sum_{j=0}^\infty\|(\partial_Z^2-\alpha^2)\varphi^{(j)}\|_{L^\infty_{\theta}}\\
	&\leq C|\log c_i| c_i^{-\frac32}\|F\|_{L^\infty_{\theta}},
\end{aligned}
$$
and
$$\begin{aligned}
	\sum_{j=0}^{\infty}	\|\varphi^{(j)}\|_{L^\infty_{\theta}}+\|\partial_Z\varphi^{(j)}\|_{L^\infty_{\theta}}&\leq C\sum_{j=0}^\infty\|(\partial_Z^2-\alpha^2)\varphi^{(j)}\|_{L^\infty_{\theta}}+C\|\varphi^{(0)}\|_{L^\infty_\theta}+C\|\partial_Z\varphi^{(0)}\|_{L^\infty_{\theta}}\nonumber\\
	&\leq \frac{C|\log c_i|}{c_i}\|F\|_{L^\infty_{\theta}}.
\end{aligned}
$$
Combining the above inequalities with \eqref{psi4} yields $H^2$-convergence of the series. Therefore, we arrive at the following result.
\begin{proposition}\label{prop:os-nonhom}
    Let $(\alpha,c)\in\mathbb{H}_2$. Then for any $F(Z)\in L^\infty_{\theta}$, there exists a solution $\phi\in H^2(\mathbb{R}_+)$ to \eqref{eq:non-OS} satisfying 
    \begin{align}
    \|\phi\|_{H^2} \leq Cc_i^{-\frac32}|\log c_i|\|F\|_{L^\infty_{\theta}}.\label{os}
    \end{align}
\end{proposition}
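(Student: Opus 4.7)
The plan is to execute the Rayleigh-Airy iteration sketched in the paragraph preceding the proposition, combining the weighted $L^\infty_\theta$ theory for the Rayleigh operator (Proposition \ref{prop:ray-non2}) with the $L^2$ theory for the Airy operator (Lemma \ref{prop-airy-energy}), and then to verify $H^2$-convergence of the resulting infinite series. The key observation that makes the scheme work is the pair of algebraic identities $\mathrm{OS} = \mathrm{Ray} - \varepsilon(\partial_Z^2-\alpha^2)^2$ and $\mathrm{OS} = (\partial_Z^2-\alpha^2)\mathrm{Airy} - 2\partial_Z(U'\,\cdot\,)$, which allow each viscous error generated by inverting $\mathrm{Ray}$ to be corrected by the Airy step, and vice versa.

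Concretely, I would first invoke Proposition \ref{prop:ray-non2} to produce the leading-order iterate $\varphi^{(0)}\in \tilde{\mathcal{Y}}_\theta$ with $\mathrm{Ray}[\varphi^{(0)}]=F$. For each $j\geq 1$, Lemma \ref{prop-airy-energy} yields a solution $\psi^{(j)}\in H^2(\mathbb{R}_+)$ of
\begin{equation*}
\mathrm{Airy}[\psi^{(j)}] = \varepsilon(\partial_Z^2-\alpha^2)\varphi^{(j-1)},
\end{equation*}
and Proposition \ref{prop:ray-non2} then produces $\varphi^{(j)}\in \tilde{\mathcal{Y}}_\theta$ with
\begin{equation*}
\mathrm{Ray}[\varphi^{(j)}] = 2\partial_Z(U'\psi^{(j)}).
\end{equation*}
By the operator identities above, the $N$-th partial sum $\varphi^{(0)}+\sum_{j=1}^N(\psi^{(j)}+\varphi^{(j)})$ solves the Orr-Sommerfeld equation with residual $-\varepsilon(\partial_Z^2-\alpha^2)\varphi^{(N)}$, so a convergent tail supplies a genuine solution of \eqref{eq:non-OS}.

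The central quantitative step is the contraction bound
\begin{equation*}
\|(\partial_Z^2-\alpha^2)\varphi^{(j)}\|_{L^\infty_\theta} \leq C\,|\varepsilon|^{1/4}\,c_i^{-3/2}\,|\log c_i|\;\|(\partial_Z^2-\alpha^2)\varphi^{(j-1)}\|_{L^\infty_\theta}.
\end{equation*}
To derive it, I would apply \eqref{est-L2-psi-energy} together with the embedding $L^\infty_\theta(\mathbb{R}_+)\hookrightarrow L^2(\mathbb{R}_+)$ to bound $\psi^{(j)}$ in $H^2$; the Sobolev embedding $H^2\hookrightarrow W^{1,\infty}$ then controls $2\partial_Z(U'\psi^{(j)})$ in $L^\infty_\theta$, with the decisive smallness packaged in the factor $|\varepsilon|^{1/4}c_i^{-1/2}$. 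Feeding this into Proposition \ref{prop:ray-non2} and exploiting $|U-c|\geq c_i$ to trade one power of $(U-c)$ for $c_i^{-1}$ on the second-derivative bound produces the additional $c_i^{-1}|\log c_i|$. Since $|\varepsilon|=\sqrt{\nu}/\alpha$ with $\nu\ll 1$, the compound factor is $\ll 1$ throughout the admissible regime, and a geometric series argument closes the iteration.

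The principal obstacle I anticipate is the mismatch of functional frameworks: $\mathrm{Ray}$ is naturally inverted in the weighted $L^\infty_\theta$ space, where the exponential weight is essential for the improper integrals in \eqref{bc1.1} to converge, while $\mathrm{Airy}$ is comfortable only in $L^2$. Reconciling the two requires embedding in one direction and Sobolev in the other, and the resulting norm losses must be absorbed by the $|\varepsilon|^{1/4}$ gain intrinsic to the Airy step. Once convergence is secured, the final bound \eqref{os} follows by summing both geometric series, using the base-case estimate $\|(\partial_Z^2-\alpha^2)\varphi^{(0)}\|_{L^\infty_\theta} \leq Cc_i^{-1}|\log c_i|\,\|F\|_{L^\infty_\theta}$ (obtained by dividing the Rayleigh equation by $U-c$ and invoking $|U-c|\geq c_i$), and converting the $\psi^{(j)}$ bounds from $L^2$ to $H^2$ via \eqref{est-L2-psi-energy}; this yields precisely the factor $c_i^{-3/2}|\log c_i|$ advertised in the statement.
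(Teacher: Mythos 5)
Your plan coincides with the paper's proof at every structural level: the same Rayleigh--Airy iteration, the same two building-block estimates (Proposition~\ref{prop:ray-non2} for the Rayleigh step, Lemma~\ref{prop-airy-energy} for the Airy step), the same contraction constant $C|\varepsilon|^{1/4}c_i^{-3/2}|\log c_i|$, the same base-case estimate for $(\partial_Z^2-\alpha^2)\varphi^{(0)}$, and the same bookkeeping to assemble the $H^2$ bound with the advertised factor $c_i^{-3/2}|\log c_i|$.

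There is one spot, though, where the tool you name does not actually deliver what you claim, and it is precisely where the proof lives or dies. You attribute the factor $|\varepsilon|^{1/4}c_i^{-1/2}$ to \emph{``the Sobolev embedding $H^2\hookrightarrow W^{1,\infty}$.''} The crude embedding gives only $\|\psi^{(j)}\|_{W^{1,\infty}}\leq C\|\psi^{(j)}\|_{H^2}\leq Cc_i^{-1/2}\|(\partial_Z^2-\alpha^2)\varphi^{(j-1)}\|_{L^\infty_\theta}$, with no $|\varepsilon|$-gain at all, and then the contraction factor would be $Cc_i^{-3/2}|\log c_i|$. Since $c_i$ is a fixed (small) constant independent of $\nu$ in the regime $\mathbb{H}_2$, this is a fixed large number, and the geometric series never closes. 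What you actually need is the one-dimensional interpolation inequality $\|f\|_{L^\infty}\lesssim\|f\|_{L^2}^{1/2}\|\partial_Z f\|_{L^2}^{1/2}$ applied to $\psi^{(j)}$ and $\partial_Z\psi^{(j)}$. This pays off because the Airy estimate \eqref{est-L2-psi-energy} is \emph{anisotropic} in the number of derivatives: $\|\partial_Z\psi^{(j)}\|_{L^2}\lesssim |\varepsilon|^{1/2}c_i^{-1/2}K$ is smaller by a factor $|\varepsilon|^{1/2}$ than $\|\partial_Z^2\psi^{(j)}\|_{L^2}\lesssim c_i^{-1/2}K$. Interpolating $\|\partial_Z\psi^{(j)}\|_{L^\infty}\lesssim\|\partial_Z\psi^{(j)}\|_{L^2}^{1/2}\|\partial_Z^2\psi^{(j)}\|_{L^2}^{1/2}$ captures half of that gain and produces the $|\varepsilon|^{1/4}$ you wrote down, and it is exactly this $\nu^{1/8}$-sized factor that beats the fixed constant $c_i^{-3/2}|\log c_i|$ and forces convergence of the series. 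So the final answer you state is correct, but the mechanism you cite for it is not; replace the embedding by Gagliardo--Nirenberg interpolation and the argument closes.
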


%%%%%%%%%%%%%%%%%%%%%%%%%%%%%%%%%%%%%
\subsection{Homogeneous Orr-Sommerfeld solution}
In this subsection, we construct the solution $\phi_s$ to 
a homogeneous Orr-Sommerfeld equation $\mathrm{OS}[\phi_s]=0.$ Recall the Rayleigh solution $\varphi_{Ray}$ obtained in Proposition \ref{prop:ray-homo}.

\begin{proposition}\label{prop-slowmode}
    Let $(\alpha,c)\in\mathbb{H}_2$. There exists a solution $\phi_s\in H^2(\mathbb{R}_+)$ to \eqref{eq:OSeq} satisfying
    \begin{align}
    \|\phi_s-\varphi_{Ray}\|_{H^2}\leq C\nu^{\frac12-}.\label{bd1}
    \end{align}
%    At the boundary $Z=0$, it holds that
%    \begin{align}\label{slowmode-0-part}\begin{aligned}
%        &\mathrm{Re}(\phi_s(0))=-\frac{c_r}{U_\infty^2} +\frac{\alpha}{U'(0)}+\mathcal{O}(\alpha^2|\log c_i|),\\ 
%        &\mathrm{Im}(\phi_s(0))=-\frac{c_i}{U_\infty^2} -\frac{U''(0)}{U'(0)^3}\alpha \big(c_r\arg(-c)+c_i\log|c|\big)+o\big(\alpha^2\big),\\ 
%        & \mathrm{Re}(\partial_Z\phi_s(0))=\frac{U'(0)}{U_\infty^2} +\mathcal{O}(\alpha|\log c_i|),\\ 
%        &\mathrm{Im}(\partial_Z\phi_s(0))=\frac{ U''(0)}{U'(0)^2}\alpha\arg(-c)+o(\alpha).
%    \end{aligned}\end{align}
\end{proposition}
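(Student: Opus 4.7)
The plan is to seek $\phi_s$ in the form $\phi_s = \varphi_{Ray} + \phi_R$, where $\phi_R$ is a viscous correction. Using the decomposition $\mathrm{OS}[\cdot]=\mathrm{Ray}[\cdot]-\varepsilon(\partial_Z^2-\alpha^2)^2$ together with $\mathrm{Ray}[\varphi_{Ray}]=0$, the requirement $\mathrm{OS}[\phi_s]=0$ reduces to
\begin{align*}
\mathrm{OS}[\phi_R] = \varepsilon(\partial_Z^2-\alpha^2)^2\varphi_{Ray}.
\end{align*}
A direct appeal to Proposition \ref{prop:os-nonhom} is blocked: Proposition \ref{prop:ray-homo} only controls $\varphi_{Ray}$ in the weighted space $\mathcal{Y}_{\eta_0}$, whose norm allows $\partial_Z^4 \varphi_{Ray}$ to blow up like $(U-c)^{-3}$ at the critical layer $Z_c$, so the right-hand side above does not lie in $L^\infty_\theta$.

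I would instead rerun the Rayleigh--Airy iteration from the proof of Proposition \ref{prop:os-nonhom}, but initialized with the homogeneous Rayleigh profile itself, so that the offending singular term is absorbed into an Airy step where only $L^2$ control of the source is needed. Setting $\varphi^{(0)} := \varphi_{Ray}$, define recursively for $j\geq 1$
\begin{align*}
\mathrm{Airy}[\psi^{(j)}] = \varepsilon(\partial_Z^2-\alpha^2)\varphi^{(j-1)},\qquad \mathrm{Ray}[\varphi^{(j)}] = 2\partial_Z(U'\psi^{(j)}),
\end{align*}
and set $\phi_R := \sum_{j\geq 1}(\psi^{(j)} + \varphi^{(j)})$. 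The same telescoping identity as in the proof of Proposition \ref{prop:os-nonhom} gives
\begin{align*}
\mathrm{OS}\Bigl[\varphi_{Ray}+\sum_{j=1}^{N}(\psi^{(j)}+\varphi^{(j)})\Bigr] = -\varepsilon(\partial_Z^2-\alpha^2)^2 \varphi^{(N)},
\end{align*}
so $\mathrm{OS}[\phi_s]=0$ as soon as the series converges in $H^2$.

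For the first step, the identity $(\partial_Z^2-\alpha^2)\varphi_{Ray} = U''\varphi_{Ray}/(U-c)$ extracted from $\mathrm{Ray}[\varphi_{Ray}]=0$, together with $\|(U-c)^{-1}\|_{L^2}\lesssim c_i^{-1/2}$ and the $\mathcal{Y}_{\eta_0}$ bound for $\varphi_{Ray}$ from Proposition \ref{prop:ray-homo}, yields
$\|\varepsilon(\partial_Z^2-\alpha^2)\varphi_{Ray}\|_{L^2}\lesssim |\varepsilon| c_i^{-1/2} = O(\nu^{1/2-})$. Lemma \ref{prop-airy-energy} then places $\psi^{(1)}\in H^2$ at the same order, and the one-dimensional Gagliardo--Nirenberg inequality $\|u\|_{L^\infty}^2\lesssim \|u\|_{L^2}\|\partial_Z u\|_{L^2}$ upgrades the $L^2$ control to pointwise control of $\psi^{(1)}$ and $\partial_Z\psi^{(1)}$; the exponential weight in $L^\infty_\theta$ is inherited from the exponential decay of $U-U_\infty$ together with the decay of the Airy Green's function away from $Z_c$. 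This places $2\partial_Z(U'\psi^{(1)})$ in $L^\infty_\theta$ and activates the Rayleigh bound \eqref{eq-Ray-nonhom-tilde-Y} for $\varphi^{(1)}$. From $j\geq 2$ onward we are precisely in the setting of Proposition \ref{prop:os-nonhom}, whose per-step contraction factor $C|\varepsilon|^{1/4}c_i^{-3/2}|\log c_i| = o(1)$ in $\mathbb{H}_2$ guarantees geometric convergence, with $\psi^{(1)}+\varphi^{(1)}$ dominating the sum.

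The principal obstacle is the first Airy step: the critical-layer singularity of $(\partial_Z^2-\alpha^2)\varphi_{Ray}$ forces one to initialize in $L^2$ rather than in $L^\infty_\theta$, and the careful passage back to $L^\infty_\theta$ via Gagliardo--Nirenberg, combined with tracking the exponential decay structure separately on either side of $Z_c$, is where essentially all the work sits. Once this bootstrap is completed, the remaining iterates live entirely in $L^\infty_\theta$ and inherit the convergence analysis of Proposition \ref{prop:os-nonhom}, leaving the size of $\phi_R$ set by $\|\psi^{(1)}\|_{H^2}\lesssim \nu^{1/2-}$.
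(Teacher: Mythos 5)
Your opening objection is not actually an obstacle, and the paper's proof uses precisely the direct route you call ``blocked.'' The membership $\varepsilon(\partial_Z^2-\alpha^2)^2\varphi_{Ray}\in L^\infty_\theta$ does hold: using $\mathrm{Ray}[\varphi_{Ray}]=0$ to write $(\partial_Z^2-\alpha^2)\varphi_{Ray}=U''\varphi_{Ray}/(U-c)$ and then differentiating, one finds that $(\partial_Z^2-\alpha^2)^2\varphi_{Ray}$ is a sum of terms each carrying at least one factor among $U'',U''',U''''$ (which supply the exponential decay) and at most $(U-c)^{-3}$; since $|U-c|\geq c_i$ pointwise and $c_i$ is a fixed, $\nu$-independent constant in $\mathbb{H}_2$, this gives $\|(\partial_Z^2-\alpha^2)^2\varphi_{Ray}\|_{L^\infty_{\eta_0}}\leq Cc_i^{-3}$ --- large but finite, and certainly $\nu$-independent. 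The critical layer here is not a true singularity because $c_i$ does not vanish with $\nu$. Feeding $F=\varepsilon(\partial_Z^2-\alpha^2)^2\varphi_{Ray}$ into Proposition~\ref{prop:os-nonhom} then gives $\|\tilde\phi_s\|_{H^2}\leq C|\varepsilon|c_i^{-9/2}|\log c_i|\leq C\nu^{1/2-}$, which is exactly what the paper does.

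The alternative iteration you propose, initialized with $\varphi^{(0)}=\varphi_{Ray}$, has a genuine gap at the first Airy step: the claimed bound $\|\psi^{(1)}\|_{H^2}\lesssim\nu^{1/2-}$ is false. Lemma~\ref{prop-airy-energy} applied to $\mathrm{Airy}[\psi^{(1)}]=\varepsilon(\partial_Z^2-\alpha^2)\varphi_{Ray}$ with $\|\varepsilon(\partial_Z^2-\alpha^2)\varphi_{Ray}\|_{L^2}\lesssim|\varepsilon|c_i^{-1/2}$ yields $\|\psi^{(1)}\|_{L^2}\lesssim|\varepsilon|c_i^{-3/2}$ and $\|\partial_Z\psi^{(1)}\|_{L^2}\lesssim|\varepsilon|^{1/2}c_i^{-1}$, both small, but the second-derivative estimate is $\|(\partial_Z^2-\alpha^2)\psi^{(1)}\|_{L^2}\leq C|\varepsilon|^{-1}c_i^{-1/2}\cdot|\varepsilon|c_i^{-1/2}=Cc_i^{-1}$, which is $O(1)$ in $\nu$. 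The factor $|\varepsilon|$ in your source is exactly cancelled by the $|\varepsilon|^{-1}$ loss in the second-derivative Airy bound, so the $H^2$ norm of $\psi^{(1)}$ does \emph{not} shrink with $\nu$, and the resulting $\phi_s-\varphi_{Ray}$ fails the estimate \eqref{bd1}. The paper avoids this by interposing an extra Rayleigh solve: it sets $\mathrm{Ray}[\varphi^{(0)}]=\varepsilon(\partial_Z^2-\alpha^2)^2\varphi_{Ray}$ \emph{before} the first Airy step, so that $\varphi^{(0)}$ already carries a factor $|\varepsilon|$, and the subsequent Airy source then carries $|\varepsilon|^2$; after the Airy operator spends one power of $|\varepsilon|$, a factor $|\varepsilon|$ remains. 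This ordering of the Rayleigh and Airy steps is not cosmetic but essential for the stated $H^2$ smallness. Separately, your remarks about propagating the exponential weight $e^{\theta Z}$ through Gagliardo--Nirenberg point at a non-issue: in the paper's iteration the weight in $\|\partial_Z(U'\psi^{(j)})\|_{L^\infty_\theta}$ is supplied by the exponential decay of $U'$ and $U''$, not by any decay of $\psi^{(j)}$ itself, so no weighted Airy estimate is required.
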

\begin{proof}
    We seek a solution of the form $\phi_s=\varphi_{Ray}+\tilde{\phi}_s$, where the remainder $\tilde{\phi}_s$ satisfies 
    \begin{align}
        OS[\tilde{\phi}_s]=\varepsilon(\partial_Z^2-\alpha^2)^2\varphi_{Ray}.\label{os1}
    \end{align}
Note that 
    \begin{align*}
        (\partial_Z^2-\alpha^2)^2\varphi_{Ray}=&(\partial_Z^2-\alpha^2)\left(\frac{U''\varphi_{Ray}}{U-c}\right)\\ =&\partial_Z^2\left(\frac{U''}{U-c}\right)\varphi_{Ray}+2\partial_Z\left(\frac{U''}{U-c}\right)\partial_Z\varphi_{Ray}\\ &~+\frac{ U''}{U-c}(\partial_Z^2-\alpha^2)\varphi_{Ray}. %+\varepsilon\alpha^2\frac{U''\varphi_{Ray}}{U-c}.
    \end{align*}
Then it follows that
$$
\begin{aligned}
\|(\partial_Z^2-\alpha^2)^2\varphi_{Ray}\|_{L^\infty_{\eta_0}}
&\leq Cc_i^{-3}\|\varphi_{Ray}\|_{L^\infty}+Cc_i^{-2}\|\partial_Z\varphi_{Ray}\|_{L^\infty}+Cc_i^{-1}\|(\partial^2_Z-\alpha^2)\varphi_{Ray}\|_{L^\infty}\nonumber\\
&\leq Cc_i^{-3},
\end{aligned}
$$
where we have used the bound \eqref{bd} in last inequality. Applying Proposition \ref{prop:os-nonhom} to \eqref{os1} with $F=\varepsilon (\partial_Z^2-\alpha^2)^2\varphi_{Ray}$, we obtain, for sufficiently small $0<\nu\ll 1$, that
\begin{align}
	\|\tilde{\phi}_{s}\|_{H^2}\leq C|\varepsilon| c_i^{-\frac92}|\log c_i|\leq C\nu^{\frac12-}.\nonumber
\end{align}
The proof of Proposition \ref{prop-slowmode} is complete.
\end{proof}

 %%%%%%%%%%%%%%%%%%%%%%
 \subsection{Viscous sublayer}
In order to correct $\partial_Z\phi_s(0)$, we construct a solution $\phi_{f}$ to the homogeneous Orr-Sommerfeld equation \eqref{eq:non-OS} that exhibits a boundary layer structure near $Z=0.$
\begin{proposition}\label{osf}
	Let $(\alpha,c)\in \mathbb{H}_2$. There exists a solution $\phi_f(Z)\in H^2(\mathbb{R}_+)$ to \eqref{eq:non-OS}, such that
	\begin{equation}
		\phi_f(0)=1,\qquad \partial_Z\phi_f(0)=-\frac{|c|^{\frac12}}{|\varepsilon|^{\frac12}}\left(e^{-\frac{1}{4}\pi \mathrm{i}}+o(1)\right).\label{fast}
\end{equation}
Moreover, $\phi_f(Z)$ satisfies the following bound:
 \begin{align}
 \frac{|c|^{\frac14}}{|\varepsilon|^{\frac14}}\|\phi_{f}\|_{L^2}+\|\phi_f\|_{L^\infty}+\frac{|\varepsilon|^{\frac14}}{|c|^{\frac14}}\|\partial_Z\phi_{f}\|_{L^2}+\frac{|\varepsilon|^{\frac34}}{|c|^{\frac34}}\|\partial_Z^2\phi_{f}\|_{L^2}\leq C.\label{bf}
 \end{align}
\end{proposition}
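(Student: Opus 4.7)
The viscous sublayer $\phi_f$ is built around the leading-order approximation $\phi_f^{(0)}(Z)\triangleq e^{-\mu Z}$, where $\mu\triangleq\sqrt{-c/\varepsilon}$ is taken on the principal branch (so $\mathrm{Re}\,\mu>0$). In our regime $|c|\sim 1\gg|\varepsilon|^{1/3}$, the boundary-layer thickness $1/|\mu|\sim|\varepsilon/c|^{1/2}$ is much smaller than the Rayleigh scale but still much larger than the Airy scale; consequently, within the layer $|U(Z)|\ll|c|$ and the Orr--Sommerfeld operator is well approximated by the constant-coefficient operator $-\varepsilon\partial_Z^4-c\,\partial_Z^2$, for which $\phi_f^{(0)}$ is an exact decaying solution with $\phi_f^{(0)}(0)=1$, $\phi_f^{(0)}(\infty)=0$, $\partial_Z\phi_f^{(0)}(0)=-\mu$. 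The plan is then to construct $\phi_f=\phi_f^{(0)}+R$ by absorbing the residual with a small remainder.

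A direct computation using the identity $\varepsilon\mu^2=-c$ yields
\begin{align*}
\mathrm{OS}[\phi_f^{(0)}](Z)=\bigl(U(Z)\mu^2-\alpha^2(U(Z)+c)-\varepsilon\alpha^4-U''(Z)\bigr)e^{-\mu Z},
\end{align*}
which is localized in the boundary layer. The dominant term $U(Z)\mu^2 e^{-\mu Z}$ has pointwise size $O(|\mu|)$, but since $U(Z)=O(Z)$ vanishes at the origin and the effective support of $e^{-\mu Z}$ is $Z\sim 1/|\mu|$, the residual is really of order $U'(0)\cdot|\mu|^{0}$ on its support. I would construct $R$ by an iteration that preserves this boundary-layer structure: at each stage the correction is sought as a product of a slowly varying amplitude and the fast exponential, in the spirit of the Rayleigh--Airy iteration of Subsection~4.2 but now adapted to the fast scale. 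After producing a particular solution via Proposition~\ref{prop:os-nonhom}, I would adjust by an appropriate multiple of the slow mode $\phi_s$ from Proposition~\ref{prop-slowmode} (and, if necessary, another decaying homogeneous OS solution) to enforce $R(0)=0$ and $R(\infty)=0$, while keeping $\|R\|_{L^\infty}=o(1)$ and $|\partial_Z R(0)|=o(|\mu|)$.

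With this decomposition, $\partial_Z\phi_f(0)=-\mu+\partial_Z R(0)$. Writing $\varepsilon=|\varepsilon|e^{-\mathrm{i}\pi/2}$ and $c=c_r+\mathrm{i}c_i$ with $c_i\ll c_r$, one computes $\arg(-c/\varepsilon)=-\pi/2+O(c_i/c_r)$, whence the principal square root gives $\mu=(|c|/|\varepsilon|)^{1/2}(e^{-\mathrm{i}\pi/4}+o(1))$; combining with $|\partial_Z R(0)|=o(|\mu|)$ produces the stated formula \eqref{fast}. The norm bounds \eqref{bf} then follow from direct evaluation on the explicit form $\phi_f^{(0)}=e^{-\mu Z}$: each of the four estimates is saturated by the leading approximation with a constant of order one, and the smallness of $R$ ensures the bounds persist for $\phi_f$. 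The principal obstacle I anticipate is producing sharp enough control on $R$, and especially on $\partial_Z R(0)$: a naive application of Proposition~\ref{prop:os-nonhom} with the residual in $L^\infty_\theta$ would give $\|R\|_{H^2}\lesssim c_i^{-3/2}|\log c_i|\cdot|\mu|$, which is too large. Overcoming this requires exploiting the boundary-layer localization of the source---for instance via a Green's function analysis of the leading-order constant-coefficient operator at the fast frequency $\mu$, or by first solving the reduced second-order Airy equation $-\varepsilon(\partial_Z^2-\alpha^2)\psi+(U-c)\psi=0$ exactly in terms of Airy functions (whose asymptotics in the sector $|c|\gg|\varepsilon|^{1/3}$ reproduce $e^{-\mu Z}$) and then treating the commutator $2\partial_Z(U'\psi)$ together with the $\alpha^2$ terms as genuine lower-order perturbations.
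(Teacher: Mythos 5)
Your proposal takes essentially the same route as the paper, and it correctly diagnoses the central technical obstacle: a single invocation of Proposition~\ref{prop:os-nonhom} with the raw residual $\mathrm{OS}[e^{-\omega Z}]$ in $L^\infty_\theta$ loses a factor of $|\omega|\sim\nu^{-1/4}$, so one must first damp the source at the sublayer scale. The paper's concrete realization of the remedy you name is the first of the two alternatives you offer, carried out as follows: rescale $\xi=Z/\kappa$ with $\kappa=|\varepsilon|^{1/2}/|c|^{1/2}$, write $\mathrm{OS}$ as $-\partial_\xi^4+\tfrac{\omega^2}{|\omega|^2}\partial_\xi^2$ plus a perturbation $\partial_\xi^2 g(\cdot)$, and iterate by inverting the \emph{constant-coefficient} second-order operator $-\partial_\xi^2+\tfrac{\omega^2}{|\omega|^2}$ via its explicit Green's function; each step gains a small factor $|\varepsilon|^{1/2}|c|^{-3/2}$ in the weighted sup-norm $\sup_\xi|e^{\theta_0\xi}\cdot|$, and after $N\geq 7$ iterations the residual is $O(\nu)$ in $L^\infty_\theta$, at which point a single application of Proposition~\ref{prop:os-nonhom} gives a remainder of size $\nu^{3/4}$ in $H^2$. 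Two small points of divergence from your plan are worth noting. First, the iterated corrections in the paper are not of WKB type (slowly varying amplitude times $e^{-\omega Z}$); they are genuine Green's-function solves of the model operator, which makes the geometric decay transparent. Second, you propose subtracting a multiple of the slow mode $\phi_s$ (and possibly another homogeneous solution) to impose $R(0)=0$; the paper simply normalizes at the end, setting $\phi_f=\tilde\phi_f/\tilde\phi_f(0)$, which avoids contaminating the sublayer with the slow mode and is what makes the boundary data \eqref{fast} and the bounds \eqref{bf} come out cleanly from $\tilde\phi_f(0)=1+O(|\varepsilon|^{1/2}|c|^{-3/2})$. Your alternative route through exact Airy functions is viable (it is the route of \cite{GGN-DMJ}) but is not the one used here, which follows \cite{GMM-duke} and deliberately avoids Airy-function asymptotics. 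As written, the proposal is a correct plan but stops short of the estimates: the geometric convergence rate of the iteration and the bound on the final remainder are the substance of the proof and are only named, not derived.
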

\begin{proof}

 Following the idea from \cite{GMM-duke}, we construct $\phi_f$ around an exponential profile:  $$\phi_{f,0}(Z)\triangleq e^{-\omega Z},\quad\omega=\left(\frac{-c}{\varepsilon}\right)^{\frac12}=\left(\frac{c_i-\mathrm{i}c_r}{|\varepsilon|}\right)^{\frac12},$$
which is a solution to $$-\varepsilon\partial_Z^4\phi_{f,0}-c\partial_Z^2\phi_{f,0}=0.$$
Since $c_i\ll c_r$, one has
\begin{align}
	\omega=\frac{|c|^{\frac12}}{|\varepsilon|^{\frac12}}\left(e^{-\frac{1}{4}\pi \mathrm{i}}+o(1)\right).\nonumber
\end{align}
Let $\kappa\triangleq\frac{1}{|\omega|}=\frac{|\varepsilon|^{\frac12}}{|c|^{\frac12}}$ be the scale of sublayer. In the rescaled variable $\xi\triangleq \frac{Z}{\kappa}$, the Orr-Sommerfeld equation \eqref{eq:OSeq} can be rewritten in the following way:
\begin{align}
	-\partial_\xi^4\phi+\frac{\omega^2}{|\omega|^2}\partial_{\xi}^2\phi&=
	-2\alpha^2\kappa^2\partial_\xi^2\phi-\varepsilon^{-1}\kappa^{2}U\partial_\xi^2\phi+\alpha^4\kappa^4\phi+\varepsilon^{-1}\alpha^2\kappa^4(U-c)\phi+\varepsilon^{-1}\kappa^4U''\phi\nonumber\\
	&=\partial_\xi^2g(\phi),\nonumber
\end{align}
where
$$
\begin{aligned}
g(\phi)=&-2\alpha^2 \kappa^2\phi-\varepsilon^{-1}\kappa^2U(\kappa\xi)\phi-2\varepsilon^{-1}\kappa^3\int_\xi^\infty U'(\kappa\xi')\phi(\xi')d \xi'\nonumber\\
&+\varepsilon^{-1}\alpha^2\kappa^4\int_\xi^\infty\int_{\xi'}^\infty (U(\kappa\xi'')-c+\varepsilon \alpha^2)\phi(\xi'')d \xi''d\xi'.
\end{aligned}
$$
Then we look for the solution $\tilde{\phi}_{f}$ in the following form:
\begin{align}
	\tilde{\phi}_f(Z)=\phi_{f,0}+\sum_{j=1}^N\phi_{f,j}(\kappa^{-1}Z)+\phi_{f,R}(Z), \label{phit}
\end{align}
where $\phi_{f,j}$ are solved from the following hierarchy:
\begin{align}
	-\partial_\xi^2\phi_{f,j}+\frac{\omega^2}{|\omega|^2}\phi_{f,j}=g(\phi_{f,j-1}),~j\geq 1.\nonumber
\end{align}
Inductively, we obtain
\begin{align}
	\phi_{f,j}(\xi)=\frac{|\omega|}{2\omega}\int_0^\xi e^{-\frac{\omega}{|\omega|}(\xi-\xi')}g(\phi_{f,j-1})(\xi')d\xi'+\frac{|\omega|}{2\omega}\int_{\xi}^\infty e^{-\frac{\omega}{|\omega|}(\xi'-\xi)}g(\phi_{f,j-1})(\xi')d\xi'.\label{phij}
\end{align}
 Then for any $\theta_0\in \left(0, \text{Re}\frac{\omega}{|\omega|}\right)$, we deduce  from \eqref{phij} that
\begin{align}
	\sup_{\xi}|e^{\theta_0\xi}\phi_{f,j}(\xi)|\leq C\sup_{\xi}|e^{\theta_0\xi} g(\phi_{f,j-1})(\xi)|.\label{phij1}
\end{align} 
Note that
\begin{align}
	\sup_{\xi}|e^{\theta_0\xi}g(\phi)(\xi)|&\leq C|\varepsilon|^{-1}\kappa^3\sup_\xi|(1+\xi)e^{\theta_0\xi}\phi(\xi)|\nonumber\\
	&\leq C|\varepsilon|^{\frac12}|c|^{-\frac32}\sup_\xi|(1+\xi)e^{\theta_0\xi}\phi(\xi)|,\nonumber
\end{align}
where we have used $|U(\kappa\xi)|\leq C\kappa\xi\|U'\|_{L^\infty}\leq C\kappa\xi$ in the first inequality. Substituting it into \eqref{phij1} yields
\begin{align}\label{phij2}
	\sup_{\xi}|e^{\theta_0\xi}\phi_{f,j}(\xi)|\leq C(|\varepsilon|^{\frac12}|c|^{-\frac32})^{j}	\sup_{\xi}|(1+\xi)^je^{\theta_0\xi}\phi_{f,0}(\xi)|\leq C(|\varepsilon|^{\frac12}|c|^{-\frac32})^{j}.
\end{align}
Similarly, it holds that for $k=1,2$, 
\begin{align}
	\sup_{\xi}|e^{\theta_0\xi}\partial_\xi^k\phi_{f,j}(\xi)|\leq C(|\varepsilon|^{\frac12}|c|^{-\frac32})^{j}.\label{phij3}
\end{align}

The remainder $\phi_{f,R}$ solves the Orr-Sommerfeld equation
\begin{align}
	\mathrm{OS}[\phi_{f,R}]=&2\varepsilon \kappa^{-2}\alpha^2\partial_\xi^2\phi_{f,N}+\kappa^{-2}U\partial_\xi^2\phi_{f,N}-\alpha^2(U-c)\phi_{f,N}-\varepsilon\alpha^4\phi_{f,N}-U''\phi_{f,N}\nonumber\\
	:=& E_{N}.\nonumber
\end{align}
Using the bounds \eqref{phij2} and \eqref{phij3} for $\phi_{f,N}$, we obtain
$$\begin{aligned}
\|E_N\|_{L^\infty_{\theta}}&\leq C\left(|\varepsilon|\kappa^{-2}\alpha^2 +\kappa^{-2}\right)\|\partial_\xi^2\phi_{f,N}\|_{L^\infty_{\theta}}
+\left(\alpha^2+|\varepsilon|\alpha^4+1\right)\|\phi_{f,N}\|_{L^\infty_{\theta}}\\
&\leq C\kappa^{-2}(|\varepsilon|^{\frac{1}{2}}|c|^{-\frac32})^N\leq C\nu,
\end{aligned}
$$
provided that $N\geq 7.$ Then applying \eqref{os} to $\phi_{f,R}$, we obtain
\begin{align}
	\|\phi_{f,R}\|_{H^2}\leq C\nu c_i^{-\frac32}|\log c_i|\leq C\nu^{\frac34},\label{phij4}
\end{align}
by the virtue of $0<\nu\ll1$. 

We now define the boundary layer solution by normalizing $\tilde{\phi}_f$ as follows:
 $$\phi_{f}(Z)=\frac{\tilde{\phi}_f(Z)}{\tilde{\phi}_f(0)}.$$
From the decomposition \eqref{phit}, the bounds \eqref{phij2} of $\phi_{f,j}$, and the estimate \eqref{phij4} for the remainder $\phi_{f,R}$, we obtain
\begin{align}\label{s1}
	\tilde{\phi}_{f}(0)=1+O(|\varepsilon|^{\frac12}|c|^{-\frac32}),
\end{align}
and
\begin{align}\label{s2}
	\partial_Z\tilde{\phi}_{f}(0)=-\omega+O(|c|)=\frac{|c|^{\frac12}}{|\varepsilon|^{\frac12}}\left(e^{-\frac{1}{4}\pi \mathrm{i}}+o(1)\right).
\end{align}
By combining \eqref{s1} with \eqref{s2}, we obtain \eqref{fast}. The estimate \eqref{bf} is a direct consequence of \eqref{phij3} and \eqref{phij4}. This concludes the proof of Proposition \ref{osf}.
\end{proof}

\subsection{Viscous instability}
In this subsection, we demonstrate that the instability established in Theorem \ref{iv} persists for the Orr-Sommerfeld equation with small viscosity $0<\nu\ll1$.

\begin{theorem}\label{thm-OSeq-solution}
    Let $\alpha\ll1$. There exists $\nu_0\in (0,1)$ such that for any $\nu\in (0,\nu_0)$, there exists $c_{os}\in\mathbb{C}$ for which the homogeneous Orr-Sommerfeld equation
    \begin{equation}\label{oseq}
    	\left\{
    	\begin{aligned}
    	&-\varepsilon (\partial_Z^2-\alpha^2)^2\phi +(U-c_{os})(\partial_Z^2-\alpha^2)\phi-U''\phi=0,\\
    	&\phi(0)=\partial_Z\phi(0)=0,
    	\end{aligned}
    \right.
    \end{equation}
admits a non-trivial solution $\phi\in H^2(\mathbb{R}_+)$ satisfying
\begin{align}
	\|\phi\|_{L^\infty}+\alpha^{\frac12}\|\phi\|_{L^2}+\|\partial_Z\phi\|_{L^2}+\frac{|\varepsilon|^{\frac14}}{|c|^{\frac14}}\|\partial_Z^2\phi\|_{L^2}\leq C.\label{bdphi}
\end{align}
Moreover, $c_{os}$ satisfies the following estimate
\begin{align}
	|c_{os}-c_{Ray}|\leq C\nu^{\frac14-},\nonumber
\end{align}
where $c_{Ray}$ is the inviscid eigenvalue constructed in Theorem \ref{iv}.
\end{theorem}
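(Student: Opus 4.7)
The plan is to construct $c_{os}$ as a small perturbation of the inviscid eigenvalue $c_{Ray}$ by assembling the slow mode $\phi_s$ of Proposition~\ref{prop-slowmode} and the viscous sublayer $\phi_f$ of Proposition~\ref{osf} into a single homogeneous Orr-Sommerfeld solution satisfying both boundary conditions at $Z=0$. First I would set
\begin{equation*}
\phi(Z;c) \triangleq \phi_s(Z;c) - \frac{\partial_Z\phi_s(0;c)}{\partial_Z\phi_f(0;c)}\,\phi_f(Z;c),
\end{equation*}
which solves \eqref{oseq}, decays at infinity, and by construction satisfies $\partial_Z\phi(0;c)=0$. The remaining condition $\phi(0;c)=0$ reduces to the scalar dispersion relation
\begin{equation*}
D(c) \triangleq \phi_s(0;c) - \frac{\partial_Z\phi_s(0;c)}{\partial_Z\phi_f(0;c)}\,\phi_f(0;c) = 0.
\end{equation*}

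The next step is to show that $D(c)$ is a small analytic perturbation of $\varphi_{Ray}(0;c)$. Using the 1D trace inequality together with $\|\phi_s-\varphi_{Ray}\|_{H^2}\le C\nu^{1/2-}$ from Proposition~\ref{prop-slowmode} yields $|\phi_s(0;c)-\varphi_{Ray}(0;c)|+|\partial_Z\phi_s(0;c)-\partial_Z\varphi_{Ray}(0;c)|\le C\nu^{1/2-}$. Combined with $\phi_f(0;c)=1$, the $O(1)$ bound on $\partial_Z\varphi_{Ray}(0;c)$ from \eqref{R3}, and the sublayer asymptotics $|\partial_Z\phi_f(0;c)|^{-1}\sim |\varepsilon|^{1/2}/|c|^{1/2}\sim \nu^{1/4}/\alpha$ from Proposition~\ref{osf}, these give
\begin{equation*}
D(c) = \varphi_{Ray}(0;c) + R(\nu;c),\qquad |R(\nu;c)|\le C\nu^{1/4-},
\end{equation*}
uniformly for $c$ in a fixed ball around $c_{Ray}$. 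Analytic dependence of $\varphi_{Ray}(0;c)$ on $c$ follows from Lemma~\ref{lmphic}, and carries over to $\phi_s$ and $\phi_f$ since each step of the Rayleigh-Airy iteration and of the sublayer construction is analytic in $c$.

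The conclusion then comes from a Rouch\'e argument. By Theorem~\ref{iv} and the bound \eqref{phicd}, $c_{Ray}$ is a simple zero of $\varphi_{Ray}(0;\cdot)$ with $\partial_c\varphi_{Ray}(0;c_{Ray})=-U_\infty^{-2}+o(1)$. Choosing any small $\delta>0$, on the circle $|c-c_{Ray}|=\nu^{1/4-\delta}$ one has $|\varphi_{Ray}(0;c)|\gtrsim \nu^{1/4-\delta}>|R(\nu;c)|$, so Rouch\'e produces a unique zero $c_{os}$ of $D$ inside this disk, yielding $|c_{os}-c_{Ray}|\le C\nu^{1/4-}$. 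The estimate \eqref{bdphi} then follows by combining the $H^2$ bound on $\phi_s$ (through Proposition~\ref{prop-slowmode} and \eqref{bd}) with the sharp sublayer bounds \eqref{bf} for $\phi_f$: the weights $|\varepsilon|^{1/4}/|c|^{1/4}=\kappa^{1/2}$ appearing in \eqref{bdphi} match exactly the boundary-layer scaling in \eqref{bf}, and the prefactor $|\partial_Z\phi_s(0;c_{os})/\partial_Z\phi_f(0;c_{os})|\le C\kappa$ keeps the $\phi_f$-contribution within the claimed bounds.

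The main obstacle is the quantitative verification of analytic dependence of $\phi_s$ on $c$ in a neighbourhood of $c_{Ray}$; this requires differentiating the Rayleigh-Airy iteration in $c$ while maintaining the uniform $H^2$ control used above, and checking that the smallness factor $|\varepsilon|^{1/4}c_i^{-3/2}|\log c_i|$ from the iteration is not spoiled. Once this is in place, the Rouch\'e step is rather clean: in sharp contrast to the 2D Tollmien-Schlichting setting of \cite{GGN-DMJ}, where the dispersion relation arises from a delicate cancellation between slow and fast modes, here the viscous correction is of size $O(\nu^{1/4-})$, strictly smaller than the inviscid separation, so the leading-order balance is governed purely by the Rayleigh root $c_{Ray}$.
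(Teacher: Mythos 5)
Your proposal is correct and follows essentially the same argument as the paper's own proof: the same combination $\phi=\phi_s-\frac{\partial_Z\phi_s(0)}{\partial_Z\phi_f(0)}\phi_f$, the same dispersion relation $\phi(0;c)=0$, the same observation that $\phi_f(0)=1$ and $|\partial_Z\phi_f(0)|^{-1}\sim|\varepsilon|^{1/2}|c|^{-1/2}$ make the viscous correction of size $O(\nu^{1/4-})$, and the same Rouch\'e argument on a disk of radius slightly above $\nu^{1/4}$ centered at $c_{Ray}$ using the lower bound $|\partial_c\varphi_{Ray}(0;c)|\gtrsim 1$ from \eqref{phicd}. The paper chooses radius $\nu^{\delta}$ with $\delta\in(0,\tfrac14)$, which is the same family of radii you write as $\nu^{1/4-\delta}$; the verification of \eqref{bdphi} from \eqref{bd}, \eqref{bd1}, and \eqref{bf} also matches.
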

\begin{proof}
	Recall the slow mode $\phi_{s}$ and the viscous boundary layer $\phi_{f}$ constructed in Propositions \ref{prop-slowmode} and \ref{osf} respectively.	We look for the solution $\phi$ of the form
	\begin{align}
		\phi(Z)=\phi_{s}(Z)-\frac{\partial_Z\phi_{s}(0)}{\partial_Z\phi_f(0)}\phi_{f}(Z).\nonumber
	\end{align}
Thus, $\phi(Z)$ satisfies the full boundary conditions in the Orr-Sommerfeld equation \eqref{oseq} if and only if $c$ is a solution to the following dispersion relation
\begin{align}
	\phi(0;c)=\phi_s(0;c)-\frac{\partial_Z\phi_{s}(0;c)}{\partial_Z\phi_f(0;c)}\phi_{f}(0;c)=0.\label{phi0}
\end{align}

	Let $D_0=\{c\in \mathbb{C}\mid |c-c_{Ray}|\leq \nu^{\delta}\}$ with $\delta\in (0,\frac14)$. Note that $\phi_{Ray}(0;c_{Ray})=0$. On the boundary $\partial D_{0}$, one has	
	\begin{align}
		|\varphi_{Ray}(0;c)|&=|\phi_{Ray}(0;c)-\varphi_{Ray}(0;c_{Ray})|\nonumber\\
		&=\left(\frac{1}{U_\infty^2}+O(1)(\alpha|\log c_i|+|c|)\right)|c-c_{Ray}|\nonumber\\
		&\geq \frac{\nu^{\delta}}{2U_{\infty}^2},\nonumber
	\end{align}
where the bound \eqref{phicd} for $\partial_c\varphi_{Ray}(0;c)$ has been used in the second inequality. From the bounds \eqref{R3}, \eqref{R4} \eqref{bd1} and \eqref{fast}, we obtain
$$
\begin{aligned}
|\phi(0;c)-\varphi_{Ray}(0;c)|&\leq \|\phi_s-\varphi_{Ray}\|_{H^2}+\frac{|\partial_Z\phi_s(0;c)|}{|\partial_Z\phi_{f}(0;c)|}\\
&\leq C\left(\nu^{\frac12-}+|\varepsilon|^{\frac12}|c|^{-\frac12}\right)\nonumber\\
&\leq C\nu^{\frac14}|\log\nu|\leq \frac12|\varphi_{Ray}(0;c)|,
\end{aligned}
$$
provided that $0<\nu\ll 1$. Moreover, the analyticity of $\phi(0;c)$ in $c$ follows from $L^\infty$-convergence of Rayleigh-Airy iteration. Therefore, by Rouch\'e Theorem, we can find a zero point $c_{os}\in \mathbb{D}_0$ of the equation \eqref{phi0}. The bound \eqref{bdphi} follows from  \eqref{bd}, \eqref{bd1}, and \eqref{bf}. The proof of Theorem \ref{thm-OSeq-solution} is complete.
\end{proof}
%%%%%%%%%%%%%%%%%%%%%%%%%%%%%%%%%%%%%%%%%%%%%%%%%
\section{Recovery of tangential velocities}\label{sec-recover}
As mentioned in the roadmap for the construction of $(\tilde{u},\tilde{v},\tilde{w})$, the vorticity in $XY-$plane: $\Psi(Z)=i\beta\tilde{u}(Z)-i\sigma\tilde{v}(Z)$ can be obtained by solving the Airy equation \eqref{eq:Airy-vorticity}. The tangential velocity components $\tilde{u}$ and $\tilde{v}$ can then be recovered via the relations
\begin{align}\label{eq:u-v-relation-w-psi}
\left\{
	\begin{aligned}
		&\mathrm{i}\sigma\tilde{u}(Z)+\mathrm{i}\beta\tilde{v}(Z)=-\partial_Z \tilde{w}(Z),\\
		&\mathrm{i}\beta\tilde{u}(Z)-\mathrm{i}\sigma\tilde{v}(Z)=\Psi(Z),
	\end{aligned}
	\right.
\end{align}
where $\sigma=\alpha\lambda_1,~\beta=\alpha\lambda_2$, with $\lambda_1$ and $ \lambda_2$ are given in Proposition \ref{pSC}. \\

{\bf Proof of Theorem \ref{main-result}:} In Theorem \ref{thm-OSeq-solution}, we find the unstable eigenvalue $c_{os}$, and construct the solution $\tilde{w}$ to the homogeneous Orr-Sommerfeld equation, which corresponds to the vetical velocity field $\tilde{w}(Z)$. From \eqref{bdphi}, we obtain
    \begin{align}\label{wb}    
       	\|\tilde{w}\|_{L^\infty}+\alpha^{\frac12}\|\tilde{w}\|_{L^2}+\|\partial_Z\tilde{w}\|_{L^2}+\nu^{\frac18}\alpha^{-\frac12}\|\partial_Z^2\tilde{w}\|_{L^2}\leq C.
    \end{align}
It suffices to establish $H^1$-bounds for $(\tilde{u},\tilde{v})$. Since $\Theta(Z)$ solves the Airy equation \eqref{eq:Airy-vorticity}, and 
   \begin{align*}
        &\left\|(-\lambda_1\partial_Zv_s+\lambda_2\partial_Z u_s)\tilde{w}\right\|_{L^2}\leq C\|\tilde{w}\|_{L^\infty}\leq C.
        \end{align*}
  Then using the Airy bound \eqref{est-L2-psi-energy} to \eqref{eq:Airy-vorticity}, we obtain
  \begin{equation}
    \begin{aligned}\label{Theta}
      \|\Psi\|_{L^2}&\leq \frac{C}{c_i}\left\|(-\lambda_1\partial_Zv_s+\lambda_2\partial_Z u_s)\tilde{w}\right\|_{L^2}\leq \frac{C}{c_i}\leq \frac{C}{\alpha^2},\\
      \|\partial_Z\Psi\|_{L^2}&\leq \frac{C}{|\varepsilon|^{\frac12}c_i^{\frac12}}\left\|(-\lambda_1\partial_Zv_s+\lambda_2\partial_Z u_s)\tilde{w}\right\|_{L^2}\leq \frac{C}{|\varepsilon|^{\frac12}c_i^{\frac12}}\leq \frac{C}{\nu^{\frac14}\alpha^{\frac12}}.
    \end{aligned}
    \end{equation}
   Then from \eqref{eq:u-v-relation-w-psi}, we recover the tangential velocity components $(\tilde{u}, \tilde{v})$ as follows.
    \begin{align*}
        \tilde{u}(Z)=\frac{-\sigma \partial_Z\tilde{w}+\beta\Psi}{\mathrm{i}(\sigma^2+\beta^2)} =\frac{-\lambda_1\partial_Z\tilde{w}+\lambda_2\Psi}{\mathrm{i}\alpha},\quad \tilde{v}(Z)=\frac{-\beta \partial_Z\tilde{w}-\sigma\Psi}{\mathrm{i}(\sigma^2+\beta^2)}
        =\frac{-\lambda_2\partial_Z\tilde{w}-\lambda_1\Psi}{\mathrm{i}\alpha}.
    \end{align*}
  Therefore, from \eqref{wb} and \eqref{Theta}, we obtain
  $$
  \begin{aligned}
  	\|(\tilde{u},\tilde{v},\tilde{w})\|_{L^2}&\leq C\alpha^{-3},\\
  	\|(\partial_Z\tilde{u},\partial_Z\tilde{v},\partial_Z\tilde{w})\|_{L^2}&\leq C\nu^{-\frac14}\alpha^{-\frac32}.
  \end{aligned}
  $$
 Thus, the proof of Theorem \ref{main-result} is complete.
   \qed

\section*{Acknowledgments}
This work is supported by the National Key R\&D Program of China under the grant 2023YFA1010300. C.-J. Liu is supported by the National
Natural Science Foundation of China No. 12271358, 12331008 and 12250710674. 
M. Ma is partially supported by NSF of China under Grant 12371230. D. Wu is supported by NSF of China under Grant 12471196.
Z. Zhang is supported by the Hong Kong Research Grant Council ECS grant (Project No. 25303523) and HK PolyU internal funding P0045335.

%%%%%%%%%%%%%%%%%%%%%%%%%%%%%%%%%%%%%%%%%

\end{document}